\numberwithin{equation}{section}
\newcommand{\abs}[1]{\left\lvert#1\right\rvert}
\newcommand{\set}[1]{\left\{#1\right\}}
\newcommand{\prob}{\mathbf P}
\newcommand{\ex}{\mathbf E}
\newcommand{\ind}{\mathds 1}
\newcommand{\entropy}{\mathbf H}
\theoremstyle{definition}
\newtheorem{definition}{Definition}[section] 
\theoremstyle{plain}
\newtheorem{theorem}[definition]{Theorem}
\newtheorem{proposition}[definition]{Proposition}
\newtheorem{lemma}[definition]{Lemma}
\newtheorem{corollary}[definition]{Corollary}
\theoremstyle{remark}
\newtheorem{remark}[definition]{Remark}
\title{Entropies of the Poisson distribution as functions of intensity: ``normal'' and ``anomalous'' behavior}
\author[1]{Dmitri Finkelshtein\thanks{{d.l.finkelshtein@swansea.ac.uk}}}
\author[2]{Anatoliy Malyarenko\thanks{{anatoliy.malyarenko@mdu.se}}}
\author[3]{Yuliya Mishura\thanks{{yuliyamishura@knu.ua}}}
\author[3,4]{Kostiantyn Ralchenko\thanks{{kostiantynralchenko@knu.ua}}}
\affil[1]{Department of Mathematics, Swansea University, Swansea, SA1 8EN, U.K.}
\affil[2]{Mälardalen University, 721 23 V\"aster{\aa}s, Sweden}
\affil[3]{Department of Probability Theory and Mathematical Statistics, Taras Shevchenko National University of Kyiv, 64 Volodymyrska, 01601, Kyiv, Ukraine}
\affil[4]{School of Technology and Innovations, University of Vaasa, P.O. Box 700, Vaasa, FIN-65101, Finland}
\begin{document}

\maketitle

\begin{abstract}
The paper extends the analysis of the entropies of the Poisson distribution with parameter~$\lambda$. It demonstrates that the Tsallis and Sharma--Mittal entropies exhibit monotonic behavior with respect to $\lambda$, whereas two generalized forms of the R\'enyi entropy may exhibit ``anomalous'' (non-monotonic) behavior. Additionally, we examine the asymptotic behavior of the entropies as $\lambda \to \infty$ and provide both lower and upper bounds for them.

\bigskip

\textbf{Keywords:} Shannon entropy, R\'enyi entropy, Tsallis entropy, Sharma--Mittal entropy, Poisson distribution

\end{abstract}

\section{Introduction}

Entropy is one of the most debated and paradoxical concepts in science. It is generally linked to the disorder and uncertainty of dynamical systems. At equilibrium, a system's entropy is maximized, obscuring initial conditions. Nowadays, entropy is a fundamental concept across a wide range of disciplines: statistical mechanics \cite{pathria2011statistical}, information theory \cite{CoverThomas}, quantum computation \cite{Nielsen}, cryptography \cite{schneier2015}, biology \cite{xu2020diversity}, neuroscience \cite{Fagerholm2023}, ecological modeling \cite{roach2019emergent}, financial market analysis and portfolio optimization \cite{Zhou-review,giunta2024exploring,MRZZ_2024}, decision tree construction \cite{Aning2022}, machine learning \cite{barbiero2022entropy}, and others. In these contexts, entropy serves as a metric to quantify information, predictability, complexity, and other relevant characteristics.

Initially, the concept of entropy was developed within the framework of thermodynamics and statistical physics. Then, Claude Shannon, in the seminal paper \cite{Shannon}, studied the concept of entropy in the context of information theory. He defined entropy for a discrete probability distribution $\{p_i\}$ as $\sum p_i\log_2 p_i$, establishing it as a limit for data compression and transmission efficiency. Shannon's work laid the foundation for modern cryptography, artificial intelligence, and communication theory.

Since then, several alternative definitions of entropy have emerged in the literature. These definitions have become increasingly complex and generalized by incorporating additional parameters. Despite their diversity, many of these entropies share fundamental properties originally proposed by Alfred R\'enyi \cite{renyi1960}. Today, R\'enyi entropy \cite{renyi1960}, which depends on an additional parameter $\alpha$, is widely used in quantum information theory and quantum statistical mechanics. One- and two-parameter generalized R\'enyi entropies were introduced by Acz\'el and Dar\'oczy~\cite{aczel-daroczy}. Another notable extension is Tsallis entropy, introduced by Constantino Tsallis \cite{tsallis1988} (see also \cite{tsallis2009,tsallis}), which incorporates a nonextensive parameter to model systems that deviate from the principles of classical statistical mechanics. Sharma--Mittal entropy \cite{sharma-mittal}, which generalizes both Shannon and Tsallis entropies, depends on two parameters, $\alpha$ and $\beta$. It provides a refined approach to controlling the balance between order and disorder within a system. The precise definitions of these entropies are provided below; see Definition \ref{def:entropies}. For other examples of entropies, see, for instance, \cite{Arimoto1971,Belis1968,Havrda1967,kapur1967,Kapur1988,Picard1979,Rathie1970,sharma-taneja75,sharma-taneja77,Varma1966}. These examples propose various methodologies for quantifying information content, complexity, and uncertainty in a wide range of physical systems. For a unified approach to various entropies and their applications in information transmission and statistical concepts, we refer to the book \cite{taneja-book}. Comparisons of different entropies for probability distributions can be found in \cite{Dey2016,Mahdy2017,Majumdar2012,malyarenko2023,Maszczyk2008}. For more recent definitions and results, see, for example, \cite{basit2020,basit2021,trandafir2018} and the references therein.

In \cite{malyarenko2023}, six types of entropy measures for the Gaussian distribution were analyzed, focusing on their interrelationships and properties as functions of their parameters. The findings indicated that, for the normal distribution, all entropies exhibited logical, consistent, and predictable behavior. In particular, each entropy increased with an increase in variance.

In \cite{braiman2024}, it was shown that similar properties hold true for the Shannon and Rényi entropies of the Poisson distribution. In particular, both entropies were found to increase as functions of the Poisson parameter, $\lambda$.

In this paper, we extend the study of entropies for the Poisson distribution, establishing, in particular, that the Tsallis and Sharma--Mittal entropies also exhibit ``normal'' behavior, specifically monotonicity with respect to $\lambda$. In contrast, we demonstrate that two generalized forms of R\'enyi entropy for the Poisson distribution may exhibit ``anomalous'' behavior. This does not imply that these entropies should be disregarded; rather, it serves as a caution to researchers that their interpretation and behavior, when applied to different distributions, may not always be intuitive or predictable. We analyze the asymptotic behavior of the entropies for the Poisson distribution as $\lambda \to \infty$: 
we find leading terms of the asymptotic for the R\'enyi entropy, one of the generalised R\'enyi entropies, and the Tsallis and Sharma–Mittal entropies (the asymptotic for the Shannon entropy is known in the literature, see \cite{boersma}).
We derive upper and lower bounds for the entropies. In particular, for the Shannon entropy, we obtain two-side bounds (for $\lambda>1$) which are agreed with its asymptotic behavior (as $\lambda\to\infty$); for the R\'enyi entropy, we estimate it through the Mittag--Leffler function.

The paper is organized as follows. In Section \ref{sec:prelim}, we recall the definitions of six types of entropy for discrete probability distributions, namely: Shannon, R\'enyi, Tsallis, Sharma--Mittal, and two generalized forms of R\'enyi entropy. We also show that all these entropy measures are strictly positive. Section \ref{sec:computation} provides explicit formulas for the various entropies of a Poisson distribution, and we also study their limiting behavior as $\lambda\to\infty$.
Section \ref{sec:bounds} demonstrates upper and lower bounds for the entropies. 
In Section \ref{sec:monotonicity}, we present results on the monotonicity and non-monotonicity of the entropies. In particular, we explore the ``normal'' behavior of the Tsallis and Sharma–Mittal entropies, contrasted with the ``anomalous'' behavior of the two generalized R\'enyi entropies. These theoretical results are complemented by graphical illustrations. Finally, some technical results and proofs are included in the Appendix.

\section{Preliminaries}
\label{sec:prelim}
\looseness=1 Consider a discrete probability distribution with probabilities $\{p_i, i \geq 0\}$.
From now on we assume that all discrete distributions are non-degenerate in the sense that 
\begin{equation}\label{eq:nondegen}
p_i\in(0,1), \ i\geq0, \qquad \sum_{i = 0}^{\infty}p_i=1.  
\end{equation}
Of course, in general, the number of probabilities can be both finite and countable. Let us introduce six types of entropies that will be considered in this paper (we call them by their common names).

\begin{definition}\label{def:entropies}
Let $\mathtt{p}:=\{p_i, i \geq 0\}$ be a non-degenerate discrete probability distribution.
\begin{enumerate}
\item 
The Shannon entropy of $\mathtt{p}$ is given by the formula
\begin{equation}\label{eq:SH}
\entropy_{SH} (\mathtt{p}) =-\sum_{i = 0}^{\infty}  p_i\log {p_i}  =  \sum_{i = 0}^{\infty}   p_i \log\left(\frac{1}{p_i}\right)  .
\end{equation}
\item
The R\'{e}nyi entropy with parameter $\alpha>0,$ $\alpha\ne1$ of $\mathtt{p}$ is given by the formula
\begin{equation}\label{eq:R}
\entropy_{R}(\alpha, \mathtt{p}) = \frac{1}{1-\alpha} \log
\left(\sum_{i=0}^\infty p_i^{\alpha}\right).
\end{equation}

 \item  The generalized R\'enyi entropy with parameter $\alpha > 0$ of $\mathtt{p}$ is given by the formula
    \begin{equation}\label{eq:GR2}
    \entropy_{GR}(\alpha, \mathtt{p}) = - \frac{\sum_{i = 0}^{\infty} p_i^\alpha   \log{p_i}  }{\sum_{i = 0}^{\infty} p_i^\alpha}
    = \frac{\sum_{i = 0}^{\infty} p_i^\alpha   \log{\frac{1}{p_i}}  }{\sum_{i = 0}^{\infty} p_i^\alpha}.
    \end{equation}

\item
    The generalized R\'enyi entropy with parameters $\alpha > 0$, $\beta > 0$, $\alpha \neq \beta$ of $\mathtt{p}$ is given by the formula
    \begin{equation}\label{eq:GR1}
    \entropy_{GR}(\alpha, \beta, \mathtt{p}) = \frac{1}{\beta - \alpha}  \log\left(\frac{\sum_{i = 0}^{\infty} p_i^\alpha }{\sum_{i = 0}^{\infty} p_i^\beta}\right).
    \end{equation}
    
\item
    The Tsallis entropy with parameter $\alpha > 0$, $\alpha \neq 1$ of $\mathtt{p}$ is given by the formula 
    \begin{equation}\label{eq:T}
    \entropy_T(\alpha, \mathtt{p}) = \frac{1}{1-\alpha} \left( \sum_{i = 0}^{\infty} p_i^\alpha - 1 \right).
    \end{equation}

\item
    The Sharma--Mittal entropy with parameters $\alpha > 0$, $\beta > 0$, $\alpha \neq 1$, $\beta \neq 1 $ of $\mathtt{p}$ is given by the formula
    \begin{equation}\label{eq:SM}
    \entropy_{SM}(\alpha, \beta, \mathtt{p}) = \frac{1}{1-\beta} \left( \left( \sum_{i = 0}^{\infty} p_i^\alpha \right)^{\frac{1- \beta}{1 - \alpha}} - 1 \right).
    \end{equation}
\end{enumerate}
\end{definition}

\begin{remark}
Note that, by \eqref{eq:nondegen},
\begin{align}
    \entropy_{GR}(1, \mathtt{p}) &=\entropy_{SH} (\mathtt{p}),\label{eq:SHhtroughGR}\\ \shortintertext{and also}
    \entropy_{GR}(\alpha, 1, \mathtt{p}) &= \entropy_{GR}(\alpha, \mathtt{p}).\notag
\end{align}
\end{remark}

\begin{proposition}\label{prop:positivity}
    All entropies \eqref{eq:SH}--\eqref{eq:SM} take strictly positive values for all admissible values of the parameters. More precisely, let 
    \[
    \mu(\mathtt{p}):= \max_{i\geq0} p_i \in(0,1),
    \]
then
    \begin{align}
            \label{trivial1}
            \entropy_{SH} (p_i, i\ge0) &\ge -\log \mu(\mathtt{p}) > 0,
            \\
            \label{trivialR}
            \entropy_{R}(\alpha, \mathtt{p})&\ge -\log\mu(\mathtt{p})>0,
            \\
            \label{trivialGR1}
            \entropy_{GR}(\alpha, \mathtt{p})&\ge -\log \mu(\mathtt{p}) >0, 
            \\
            \label{trivialGR2}
            \entropy_{GR}(\alpha,\beta, \mathtt{p})&\ge -\log\mu(\mathtt{p})>0,
            \\\label{trivialT}
            \entropy_{T}(\alpha, \mathtt{p})&\ge  \frac{1}{1-\alpha}\bigl(\mu(\mathtt{p}) ^{\alpha-1} - 1 \bigr)>0, 
            \\\label{trivialSM}
            \entropy_{SM}(\alpha, \beta, \mathtt{p})&\ge \frac{1}{1-\beta}\bigl(\mu(\mathtt{p}) ^{\beta-1} - 1 \bigr)>0.
    \end{align}
\end{proposition}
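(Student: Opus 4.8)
The plan is to reduce everything to two elementary observations. First, since $p_i\le\mu(\mathtt{p})<1$ for every $i$ and $\sum_i p_i=1$, monotonicity of the logarithm gives $\log(1/p_i)\ge -\log\mu(\mathtt{p})>0$ for all $i$. Second, writing $S_\alpha:=\sum_{i\ge0}p_i^\alpha$ and factoring $p_i^\alpha=p_i\,p_i^{\alpha-1}$, the bound $p_i\le\mu(\mathtt{p})$ together with $\sum_i p_i=1$ yields the one-sided estimates
\begin{equation*}
S_\alpha\le \mu(\mathtt{p})^{\alpha-1}\ \ (\alpha>1),\qquad S_\alpha\ge\mu(\mathtt{p})^{\alpha-1}\ \ (0<\alpha<1),
\end{equation*}
because $x\mapsto x^{\alpha-1}$ is increasing for $\alpha>1$ and decreasing for $\alpha<1$. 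These two facts carry essentially the whole argument. Indeed, \eqref{trivial1} is immediate: $\entropy_{SH}(\mathtt{p})=\sum_i p_i\log(1/p_i)\ge(-\log\mu(\mathtt{p}))\sum_i p_i=-\log\mu(\mathtt{p})$. The same first observation proves \eqref{trivialGR1}, since $\entropy_{GR}(\alpha,\mathtt{p})$ is the weighted average of the numbers $\log(1/p_i)$ against the positive weights $p_i^\alpha/S_\alpha$, and is therefore bounded below by $\min_i\log(1/p_i)\ge-\log\mu(\mathtt{p})$.

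For \eqref{trivialR} I would split on the sign of $1-\alpha$ and insert the estimate for $S_\alpha$: when $\alpha>1$ one has $\log S_\alpha\le(\alpha-1)\log\mu(\mathtt{p})$ while the prefactor $1/(1-\alpha)$ is negative, and for $0<\alpha<1$ both inequalities reverse; in either case the resulting lower bound simplifies to $-\log\mu(\mathtt{p})$. The Tsallis bound \eqref{trivialT} follows by exactly the same sign bookkeeping applied to $S_\alpha-1$ in place of $\log S_\alpha$. For the two-parameter generalized Rényi entropy \eqref{trivialGR2}, the expression in \eqref{eq:GR1} is symmetric in $\alpha,\beta$, so I may assume $\beta>\alpha$; then factoring $p_i^\beta=p_i^\alpha p_i^{\beta-\alpha}$ and using $p_i^{\beta-\alpha}\le\mu(\mathtt{p})^{\beta-\alpha}$ gives $S_\beta\le\mu(\mathtt{p})^{\beta-\alpha}S_\alpha$, which rearranges precisely to $\frac{1}{\beta-\alpha}\log(S_\alpha/S_\beta)\ge-\log\mu(\mathtt{p})$.

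The delicate case is the Sharma--Mittal bound \eqref{trivialSM}, where the nested exponent $\frac{1-\beta}{1-\alpha}$ forces one to track the signs of both $1-\alpha$ and $1-\beta$ at once. I would avoid this entirely by reformulating \eqref{eq:SM} through the already-proved Rényi bound. Since $(S_\alpha)^{\frac{1-\beta}{1-\alpha}}=\exp\bigl((1-\beta)\entropy_R(\alpha,\mathtt{p})\bigr)$, we may write $\entropy_{SM}(\alpha,\beta,\mathtt{p})=g\bigl(\entropy_R(\alpha,\mathtt{p})\bigr)$ with $g(r):=\frac{1}{1-\beta}\bigl(e^{(1-\beta)r}-1\bigr)$. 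A one-line computation gives $g'(r)=e^{(1-\beta)r}>0$, so $g$ is strictly increasing regardless of whether $\beta<1$ or $\beta>1$, and $g(0)=0$. Combining this with $\entropy_R(\alpha,\mathtt{p})\ge-\log\mu(\mathtt{p})>0$ from \eqref{trivialR} yields $\entropy_{SM}(\alpha,\beta,\mathtt{p})\ge g\bigl(-\log\mu(\mathtt{p})\bigr)=\frac{1}{1-\beta}\bigl(\mu(\mathtt{p})^{\beta-1}-1\bigr)>g(0)=0$, which is exactly \eqref{trivialSM}. The same device re-proves \eqref{trivialT} as the degenerate case $\beta=\alpha$, providing a uniform alternative to the sign bookkeeping above.

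The only remaining point is convergence: for $\alpha\ge1$ one has $S_\alpha\le1<\infty$, while for $0<\alpha<1$ the sum may legitimately diverge, in which case the stated lower bounds hold vacuously with the entropy equal to $+\infty$. This is routine and affects none of the steps, so I expect the reformulation in the last paragraph — rather than any single estimate — to be the conceptual crux.
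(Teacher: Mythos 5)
Your proof is correct, and for five of the six bounds it coincides with the paper's own argument: the weighted-average observation for $\entropy_{SH}$ and $\entropy_{GR}(\alpha,\mathtt{p})$, the two one-sided estimates comparing $S_\alpha=\sum_i p_i^\alpha$ with $\mu(\mathtt{p})^{\alpha-1}$ for the R\'enyi and Tsallis bounds, and the factoring $p_i^\beta=p_i^\alpha p_i^{\beta-\alpha}$ for $\entropy_{GR}(\alpha,\beta,\mathtt{p})$ (the paper treats both orderings of $\alpha,\beta$ explicitly instead of invoking symmetry, but the estimate is identical). Where you genuinely depart from the paper is Sharma--Mittal: the paper proves \eqref{trivialSM} directly from the inequality $\bigl(\psi(\alpha,\mathtt{p})\bigr)^{1/(1-\alpha)}\ge\mu(\mathtt{p})^{-1}$, raising it to the power $1-\beta$ and dividing by $1-\beta$, which tacitly requires tracking the sign of $1-\beta$ twice; you instead write $\entropy_{SM}(\alpha,\beta,\mathtt{p})=g\bigl(\entropy_R(\alpha,\mathtt{p})\bigr)$ with $g(r)=\frac{1}{1-\beta}\bigl(e^{(1-\beta)r}-1\bigr)$, observe that $g'(r)=e^{(1-\beta)r}>0$ and $g(0)=0$ for either sign of $1-\beta$, and compose with the already-proved R\'enyi bound \eqref{trivialR}. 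This buys a uniform argument with no case split on $\beta$, makes transparent why the lower bound in \eqref{trivialSM} is exactly the image of $-\log\mu(\mathtt{p})$ under $g$, and recovers the Tsallis bound as the degenerate case $\beta=\alpha$; the cost is nil, since the underlying estimate is the same one the paper uses. One further point in your favor: you flag that $S_\alpha$ may diverge for $0<\alpha<1$ for a general discrete distribution, in which case the bounds hold trivially with infinite entropy; the paper passes over this in silence (it is immaterial for the Poisson case studied later, where $\psi(\alpha,\lambda)<\infty$ for all $\alpha>0$).
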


\begin{proof}
1. First, we note that
\[
\entropy_{SH}(p_i, i\ge0) = -\sum_{i\geq 0}p_i \log p_i
\ge -\sum_{i\geq 0}p_i  \log \mu(\mathtt{p}) 
= -\log\mu(\mathtt{p})>0,
\]
and similarly,
\[
 \entropy_{GR}(\alpha, \mathtt{p}) = - \frac{\sum_{i = 0}^{\infty} p_i^\alpha   \log{p_i}  }{\sum_{i = 0}^{\infty} p_i^\alpha}\geq 
 - \frac{\sum_{i = 0}^{\infty} p_i^\alpha   \log\mu(\mathtt{p}) }{\sum_{i = 0}^{\infty} p_i^\alpha} =-\log\mu(\mathtt{p})>0.
\]

2. Next, we consider the function
\begin{equation}\label{eq:psigeneral}
    \psi(\alpha, \mathtt{p}):= \sum_{i=0}^\infty p_i^\alpha>0.
\end{equation}
For $\alpha>1$, we have that
\begin{equation}
    \psi(\alpha, \mathtt{p})= \sum_{i=0}^\infty p_i p_i^{\alpha-1}\leq (\max_{i\geq0} p_i)^{\alpha-1}  \sum_{i=0}^\infty p_i= \mu(\mathtt{p})^{\alpha-1}.\label{eq:alphage1}
\end{equation}
If $\alpha\in(0,1)$, then the sign of the inequality is reverted (as now $\alpha-1<0$):
\begin{equation}
    \psi(\alpha, \mathtt{p})= \sum_{i=0}^\infty p_i p_i^{\alpha-1}\geq (\max_{i\geq0} p_i)^{\alpha-1}  \sum_{i=0}^\infty p_i= \mu(\mathtt{p})^{\alpha-1}.\label{eq:alphale1}
\end{equation}
Combining \eqref{eq:alphage1}--\eqref{eq:alphale1}, we will get that, for all $\alpha>0$, $\alpha\neq 1$,
\[
\entropy_R(\alpha,\mathtt{p}) = \frac{1}{1-\alpha}\log \psi(\alpha, \mathtt{p})\geq \frac{1}{1-\alpha}\log \mu(\mathtt{p})^{\alpha-1}=-\log\mu(\mathtt{p}),
\]
and
\[
\entropy_T(\alpha,\mathtt{p})= \frac1{1-\alpha} \bigl( \psi(\alpha, \mathtt{p})-1 \bigr)\geq \frac1{1-\alpha}(\mu(\mathtt{p})^{\alpha-1}-1)>0,
\]
as the signs of $1-\alpha$ and $\mu(\mathtt{p})^{\alpha-1}-1$ coincide. 
Also, combining \eqref{eq:alphage1}--\eqref{eq:alphale1}, we will get that, for all $\alpha>0$, $\alpha\neq 1$,
\begin{equation}\label{eq:bgimumin1}
    \bigl(\psi(\alpha, \mathtt{p})\bigr)^{\frac{1}{1-\alpha}}\geq \mu(\mathtt{p})^{-1},
\end{equation}
and therefore,
\[
\entropy_{SM}(\alpha, \beta, \mathtt{p}) = \frac{1}{1-\beta} \left( \left(\psi(\alpha, \mathtt{p}) \right)^{\frac{1- \beta}{1 - \alpha}} - 1 \right)\geq
\frac{1}{1-\beta} \left( \mu(\mathtt{p})^{\beta-1} - 1 \right)>0.
\]

3. Finally, we have
\[
\entropy_{GR}(\alpha,\beta, \mathtt{p})=\frac{1}{\beta-\alpha} \log\left(
\frac{\sum\limits_{i\geq0} p_i^{\beta} p_i^{\alpha - \beta}}{\sum\limits_{i\geq0} p_i^{\beta}}\right).
\]
For $\alpha>\beta$, we have $p_i^{\alpha-\beta}\leq \mathtt{p}^{\alpha-\beta}$, and since $\beta-\alpha<0$, we will get 
\[
\entropy_{GR}(\alpha,\beta, \mathtt{p})\geq \frac{1}{\beta-\alpha}
\log\left(
\frac{\sum\limits_{i\geq0} p_i^{\beta} \mu(\mathtt{p})^{\alpha - \beta}}{\sum\limits_{i\geq0} p_i^{\beta}}\right) =\mu(\mathtt{p}).
\]
For $\alpha<\beta$, $p_i^{\alpha-\beta}\geq \mu(\mathtt{p})^{\alpha-\beta}$ and $\beta-\alpha>0$, hence, we will get the same estimate.
\end{proof}

\begin{remark}
    Note that $\psi(\alpha,\mathtt{p})$ is strictly decreasing in $\alpha$, as $p_i\in(0,1)$. Then, for $\alpha >1$, $\psi(\alpha,\mathtt{p})<\psi(1,\mathtt{p})=1$, and hence,
    \begin{equation}\label{eq:upperT}
        \entropy_T(\alpha,\mathtt{p})=\frac1{\alpha-1} \bigl( 1-\psi(\alpha, \mathtt{p})\bigr)<\frac1{\alpha-1}.
    \end{equation}
    Next, for $\beta>1$, we get from \eqref{eq:bgimumin1} that
    \[
    \bigl(\psi(\alpha, \mathtt{p}) \bigr)^{\frac{1- \beta}{1 - \alpha}}\leq \mu(\mathtt{p})^{\beta-1} <1,
    \]
    and hence,
    \begin{equation}\label{eq:upperSM}
        \entropy_{SM}(\alpha, \beta, \mathtt{p}) = \frac{1}{\beta-1} \left( 1- \left(\psi(\alpha, \mathtt{p}) \right)^{\frac{1- \beta}{1 - \alpha}}  \right)<\frac{1}{\beta-1}.
    \end{equation}
\end{remark}

\section{Entropies of the Poisson distribution}
\label{sec:computation}

We are going to consider properties of the six types of entropies introduced above for Poisson distribution; for other results in this direction, see e.g. \cite{braiman2024,  MR4312787, malyarenko2023}.

Recall that a random variable $X_\lambda$ has a Poisson distribution with parameter $\lambda > 0$ if
\[
p_i(\lambda):=\prob(X_\lambda = i) = \frac{\lambda^i  e^{-\lambda}}{i!},\quad i \ge 0.
\]

In the sequel, we denote by $\entropy_{SH}(\lambda)$, $\entropy_{R}(\alpha, \lambda)$, $\entropy_{GR}(\alpha, \lambda)$, $\entropy_{GR}(\alpha, \beta, \lambda)$, $\entropy_{T}(\alpha, \lambda)$, $\entropy_{SM}(\alpha, \beta, \lambda)$ the entropies \eqref{eq:SH}--\eqref{eq:SM}, respectively, for the Poisson distribution with parameter $\lambda$.

\begin{proposition}\label{prop:entropies-poisson}
\begin{enumerate}[(i)]
\item The Shannon entropy for a Poisson distribution with parameter $\lambda > 0$ equals
\begin{equation}\label{eq:exprSH}
     \entropy_{SH}(\lambda) = -\lambda \log \left( \frac{\lambda}{e} \right) + e^{-\lambda} \sum_{i = 2}^\infty \frac{\lambda^i \log(i!)}{i!}.
\end{equation}

\item
    The R\'enyi entropy with parameter $\alpha > 0$, $\alpha \neq 1$ for a Poisson distribution with parameter $\lambda > 0$ equals
    \[
    \entropy_R(\alpha, \lambda) = \frac{1}{1-\alpha} \log \left( e^{-\alpha \lambda} \sum_{i=0}^{\infty} \frac{\lambda^{i \alpha}}{(i !)^\alpha} \right).
    \]
    
\item The generalized R\'enyi entropy with parameter $\alpha > 0$ for the Poisson distribution with parameter $\lambda > 0$ equals
\[
\entropy_{GR}(\alpha, \lambda)= \frac{\sum_{i = 1}^{\infty} {\frac{\lambda^{i \alpha}} {(i!)^\alpha}}  (\log i! - i  \log\lambda ) }{\sum_{i = 0}^{\infty} {\frac{\lambda^{i \alpha} }{(i!)^\alpha}}} + \lambda.
\]

\item The generalized R\'enyi entropy with parameters $\alpha \neq \beta$, $\alpha > 0$, $\beta > 0$ for the Poisson distribution with parameter $\lambda > 0$ equals
\[
\entropy_{GR}(\alpha, \beta, \lambda) = \lambda + \frac{1}{\beta - \alpha} \log \left( \frac {\sum_{i = 0}^{\infty} \frac{\lambda^{i \alpha}}{(i!)^\alpha}} {\sum_{i = 0}^{\infty} \frac{\lambda^{i \beta}}{(i!)^\beta}} \right).
\]

\item The Tsallis entropy with parameter $\alpha > 0, \alpha \neq 1$ for the Poisson distribution with parameter $\lambda > 0$ equals
\[
\entropy_{T}(\alpha, \lambda) = \frac{1}{1-\alpha} \left( e^{-\lambda \alpha }\sum_{i = 0}^{\infty} \frac{\lambda^{i \alpha} }{(i!)^\alpha} - 1 \right).
\]

\item The Sharma--Mittal entropy with parameters $\alpha > 0$, $\beta > 0$, $\alpha \neq 1$, $\beta \neq 1 $ for the Poisson distribution with parameter $\lambda > 0 $ equals
\[
\entropy_{SM}(\alpha, \beta, \lambda) = \frac{1}{1-\beta} \left( e^\frac{-\lambda  \alpha  (1 - \beta)}{1 - \alpha}\left( \sum_{i = 0}^{\infty} \frac{\lambda^{i \alpha} }{(i!)^\alpha} \right)^{\frac{1- \beta}{1 - \alpha}} - 1 \right).
\]
\end{enumerate}
\end{proposition}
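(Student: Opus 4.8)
The plan is to substitute the Poisson probabilities $p_i(\lambda) = \lambda^i e^{-\lambda}/i!$ directly into each of the six definitions \eqref{eq:SH}--\eqref{eq:SM} and simplify. Two pointwise identities do all the work: taking logarithms gives $\log p_i(\lambda) = i\log\lambda - \lambda - \log(i!)$, while raising to the power $\alpha$ gives $p_i(\lambda)^\alpha = e^{-\alpha\lambda}\,\lambda^{i\alpha}/(i!)^\alpha$. It is therefore convenient to isolate the auxiliary series $S(\alpha,\lambda) := \sum_{i\ge0}\lambda^{i\alpha}/(i!)^\alpha$, so that $\psi(\alpha,\lambda) := \sum_{i\ge0}p_i(\lambda)^\alpha = e^{-\alpha\lambda}S(\alpha,\lambda)$, and to express every formula through $S$.

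For the R\'enyi, Tsallis, and Sharma--Mittal entropies (items (ii), (v), (vi)) the computation is purely algebraic, since each is a function of $\psi(\alpha,\lambda)$ alone: I would simply insert $\psi(\alpha,\lambda) = e^{-\alpha\lambda}S(\alpha,\lambda)$ into \eqref{eq:R}, \eqref{eq:T}, \eqref{eq:SM}. For item (vi) the only manipulation is $(e^{-\alpha\lambda})^{(1-\beta)/(1-\alpha)} = e^{-\alpha\lambda(1-\beta)/(1-\alpha)}$, which produces the stated exponential prefactor. For the two-parameter generalized R\'enyi entropy \eqref{eq:GR1} (item (iv)), I would write the ratio $\psi(\alpha,\lambda)/\psi(\beta,\lambda) = e^{(\beta-\alpha)\lambda}\,S(\alpha,\lambda)/S(\beta,\lambda)$; the factor $e^{(\beta-\alpha)\lambda}$ contributes $\frac{1}{\beta-\alpha}\log e^{(\beta-\alpha)\lambda} = \lambda$, giving the additive $\lambda$.

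The two entropies that genuinely involve $\log p_i$ -- the Shannon entropy (i) and the single-parameter generalized R\'enyi entropy (iii) -- require the additive identity for $\log p_i(\lambda)$. For Shannon, substituting $\log p_i(\lambda) = i\log\lambda - \lambda - \log(i!)$ splits the sum into three pieces; using the normalization $\sum_{i\ge0} p_i(\lambda) = 1$ and the Poisson mean $\sum_{i\ge0} i\,p_i(\lambda) = \lambda$, the first two collapse to $-\lambda\log\lambda + \lambda = -\lambda\log(\lambda/e)$, and the third is the stated series, in which the $i=0,1$ terms drop out because $\log(i!)=0$ there. For item (iii) the same substitution in the numerator of \eqref{eq:GR2}, followed by cancellation of the common factor $e^{-\alpha\lambda}$ between numerator and denominator, yields the quotient of $S$-series, while the $-\lambda$ contribution produces the additive $\lambda$ (using $\psi(\alpha,\lambda)$ in the denominator).

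The only point requiring more than bookkeeping is justifying that these rearrangements are legitimate, i.e.\ that the relevant series converge absolutely so that the term-by-term splitting and the cancellation of $e^{-\alpha\lambda}$ are valid. This is immediate for $S(\alpha,\lambda)$, whose positive terms are dominated by a convergent comparison series for every $\alpha>0$, $\lambda>0$; for the Shannon series it follows from the crude bound $\log(i!)\le i\log i$, which makes $e^{-\lambda}\sum_{i\ge0} \lambda^i\log(i!)/i!$ finite. I would record this convergence remark once and then treat all six identities as equalities between finite quantities.
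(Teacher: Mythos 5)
Your proposal is correct and follows essentially the same route as the paper: direct substitution of $p_i(\lambda)=\lambda^i e^{-\lambda}/i!$ into the six definitions, using $\log p_i(\lambda)=i\log\lambda-\lambda-\log(i!)$ and $p_i(\lambda)^\alpha=e^{-\alpha\lambda}\lambda^{i\alpha}/(i!)^\alpha$, with the factors $e^{-\alpha\lambda}$ and $e^{(\beta-\alpha)\lambda}$ producing the additive $\lambda$ terms exactly as in the paper's computations for items (iii)--(vi). The only difference is that the paper cites \cite{braiman2024} for items (i) and (ii) rather than reproving them, whereas you carry out those two computations (correctly) yourself, and your closing remark on absolute convergence corresponds to what the paper defers to Lemma~\ref{l:der-psi} in its appendix.
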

    
\begin{proof}
The claims $(i)$ and $(ii)$ are proved in \cite{braiman2024}.

$(iii)$ By \eqref{eq:GR2},
\begin{align*}
\entropy_{GR}(\alpha, \lambda) &=- \frac{\sum_{i = 0}^{\infty} p_i^\alpha  \log{p_i}  }{\sum_{i = 0}^{\infty} p_i^\alpha}
= - \frac{\sum_{i = 0}^{\infty} {\frac{\lambda^{i \alpha}  e^{-\lambda\alpha}}{(i!)^\alpha}}  \log{\frac{\lambda^i  e^{-\lambda}}{i!}}  }{\sum_{i = 0}^{\infty} {\frac{\lambda^{i \alpha}  e^{-\lambda\alpha}}{(i!)^\alpha}}}
\\
&= - \frac{\sum_{i = 0}^{\infty} {\frac{\lambda^{i \alpha}} {(i!)^\alpha}}  (i  \log\lambda - \lambda - \log i!) }{\sum_{i = 0}^{\infty} {\frac{\lambda^{i \alpha} }{(i!)^\alpha}}} 
= \frac{\sum_{i = 0}^{\infty} {\frac{\lambda^{i \alpha}} {(i!)^\alpha}}  (\log i! - i  \log\lambda ) }{\sum_{i = 0}^{\infty} {\frac{\lambda^{i \alpha} }{(i!)^\alpha}}} + \lambda.
\end{align*}

$(iv)$ By \eqref{eq:GR1},
\begin{align*}
\entropy_{GR}(\alpha, \beta, \lambda) &= \frac{1}{\beta - \alpha} \log\left(\frac{\sum_{i = 0}^{\infty} p_i^\alpha }{\sum_{i = 0}^{\infty} p_i^\beta}\right)
= \frac{1}{\beta - \alpha} \log \left( \frac {\sum_{i = 0}^{\infty} \frac{\lambda^{i \alpha} e^{-\lambda  \alpha }}{(i!)^\alpha}} {\sum_{i = 0}^{\infty} \frac{\lambda^{i \beta} e^{-\lambda  \beta }}{(i!)^\beta}} \right)
\\
&= \frac{1}{\beta - \alpha} \log \left( e^{\lambda  (\beta - \alpha)} \frac {\sum_{i = 0}^{\infty} \frac{\lambda^{i \alpha}}{(i!)^\alpha}} {\sum_{i = 0}^{\infty} \frac{\lambda^{i \beta}}{(i!)^\beta}} \right)
=\lambda + \frac{1}{\beta - \alpha} \log \left( \frac {\sum_{i = 0}^{\infty} \frac{\lambda^{i \alpha}}{(i!)^\alpha}} {\sum_{i = 0}^{\infty} \frac{\lambda^{i \beta}}{(i!)^\beta}} \right).
\end{align*}

$(v)$ By \eqref{eq:T},
\[
\entropy_T(\alpha, \lambda) = \frac{1}{1-\alpha} \left( \sum_{i = 0}^{\infty} p_i^\alpha - 1 \right)
= \frac{1}{1-\alpha} \left( \sum_{i = 0}^{\infty} \frac{\lambda^{i \alpha}  e^{-\lambda \alpha }}{(i!)^\alpha} - 1 \right)
=\frac{1}{1-\alpha} \left( e^{-\lambda \alpha }\sum_{i = 0}^{\infty} \frac{\lambda^{i \alpha} }{(i!)^\alpha} - 1 \right).
\]

$(vi)$ By \eqref{eq:SM},
\begin{align*}
    \entropy_{SM}(\alpha, \beta, \lambda) &= \frac{1}{1-\beta} \left( \left( \sum_{i = 0}^{\infty} p_i^\alpha \right)^{\frac{1- \beta}{1 - \alpha}} - 1 \right)
    = \frac{1}{1-\beta} \left( \left( \sum_{i = 0}^{\infty} \frac{\lambda^{i \alpha}  e^{-\lambda  \alpha}}{(i!)^\alpha} \right)^{\frac{1- \beta}{1 - \alpha}} - 1 \right)
    \\
    &= \frac{1}{1-\beta} \left( e^\frac{-\lambda  \alpha  (1 - \beta)}{1 - \alpha}\left( \sum_{i = 0}^{\infty} \frac{\lambda^{i \alpha} }{(i!)^\alpha} \right)^{\frac{1- \beta}{1 - \alpha}} - 1 \right).\qedhere
\end{align*}
\end{proof}

Let us introduce the function, cf. \eqref{eq:psigeneral},
\begin{equation}\label{psifunction}
\psi(\alpha,\lambda) = \sum_{i = 0}^{\infty}\bigl(p_i(\lambda)\bigr)^\alpha = e^{-\lambda \alpha }\sum_{i = 0}^{\infty} \frac{\lambda^{i \alpha} }{(i!)^\alpha},
\quad \lambda > 0,\; \alpha > 0.
\end{equation}
By Lemma~\ref{l:der-psi} in Appendix, function $\psi(\alpha,\lambda)$ is well-defined and continuously differentiable in both variables $\lambda > 0$ and $\alpha > 0$.

\begin{corollary}
All considered entropies for the Poisson distribution can be then expressed in the terms of function $\psi$:
    \begin{align}
    \entropy_R(\alpha,\lambda)&=\frac{1}{1-\alpha}\log \psi(\alpha,\lambda);\label{eq:entR-psi}\\
    \entropy_{T}(\alpha,\lambda)&=\frac{1}{1-\alpha}\bigl(\psi(\alpha,\lambda)-1\bigr);\label{eq:entT-psi}\\ 
    \entropy_{GR}(\alpha,\beta,\lambda)&=\frac1{\beta-\alpha} \log \frac{\psi(\alpha, \lambda)}{\psi(\beta, \lambda)};\label{eq:entGGR-psi}\\
    \entropy_{SM}(\alpha,\beta,\lambda)&=
    \frac{1}{1-\beta}\left( \bigl(\psi(\alpha,\lambda)\bigr)^{\frac{1-\beta}{1-\alpha}}-1\right);\label{eq:entSM-psi}\\
    \entropy_{GR}(\alpha,\lambda)&=-\frac{\partial}{\partial\alpha} \log \psi(\alpha, \lambda);\label{eq:entGR-psi}\\
    \entropy_{SH}(\lambda)&=-\left.\frac{\partial}{\partial\alpha} \log \psi(\alpha, \lambda)\right\rvert_{\alpha=1}.\label{eq:entSH-psi}
    \end{align}
\end{corollary}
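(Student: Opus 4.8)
The plan is to split the six identities into two groups according to their difficulty. The first four, \eqref{eq:entR-psi}--\eqref{eq:entSM-psi}, are purely algebraic rewritings. Since by \eqref{psifunction} we have $\psi(\alpha,\lambda)=\sum_{i\ge0}\bigl(p_i(\lambda)\bigr)^\alpha$, each of these formulas follows by substituting this identity directly into the corresponding definition from Definition~\ref{def:entropies}. For instance, the R\'enyi formula \eqref{eq:R} gives at once $\entropy_R(\alpha,\lambda)=\frac{1}{1-\alpha}\log\bigl(\sum_i p_i^\alpha\bigr)=\frac{1}{1-\alpha}\log\psi(\alpha,\lambda)$, and the Tsallis \eqref{eq:T}, two-parameter generalized R\'enyi \eqref{eq:GR1}, and Sharma--Mittal \eqref{eq:SM} cases are identical, replacing every occurrence of $\sum_i p_i^\alpha$ (resp.\ $\sum_i p_i^\beta$) by $\psi(\alpha,\lambda)$ (resp.\ $\psi(\beta,\lambda)$). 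No computation beyond this recognition is needed.

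The substance of the statement lies in the two differential identities \eqref{eq:entGR-psi} and \eqref{eq:entSH-psi}. For \eqref{eq:entGR-psi} I would differentiate $\log\psi$ in $\alpha$, obtaining
\[
\frac{\partial}{\partial\alpha}\log\psi(\alpha,\lambda)
=\frac{1}{\psi(\alpha,\lambda)}\frac{\partial}{\partial\alpha}\sum_{i\ge0}\bigl(p_i(\lambda)\bigr)^\alpha
=\frac{\sum_{i\ge0}\bigl(p_i(\lambda)\bigr)^\alpha\log p_i(\lambda)}{\psi(\alpha,\lambda)},
\]
and then compare the last expression with the defining formula \eqref{eq:GR2} of $\entropy_{GR}(\alpha,\mathtt{p})$: the two coincide up to the overall sign, which is precisely \eqref{eq:entGR-psi}.

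The one genuine obstacle is the middle equality above, namely the interchange of $\partial/\partial\alpha$ with the infinite sum (together with the convergence of the differentiated series). This is exactly what Lemma~\ref{l:der-psi} provides, asserting that $\psi$ is continuously differentiable in $\alpha>0$; I would simply invoke it rather than re-establishing term-by-term differentiation by hand.

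Finally, \eqref{eq:entSH-psi} is the specialization of \eqref{eq:entGR-psi} to $\alpha=1$. Evaluating the identity just derived at $\alpha=1$ and using $\psi(1,\lambda)=\sum_i p_i(\lambda)=1$ from \eqref{eq:nondegen}, the right-hand side becomes $-\sum_i p_i(\lambda)\log p_i(\lambda)=\entropy_{SH}(\lambda)$; equivalently one cites the relation $\entropy_{GR}(1,\mathtt{p})=\entropy_{SH}(\mathtt{p})$ recorded in \eqref{eq:SHhtroughGR}. The continuous differentiability at $\alpha=1$ furnished by Lemma~\ref{l:der-psi} makes this evaluation legitimate.
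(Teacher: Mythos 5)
Your proposal is correct and follows essentially the same route as the paper: the first four identities by direct substitution of \eqref{psifunction} into Definition~\ref{def:entropies}, the identity \eqref{eq:entGR-psi} via the term-by-term differentiation formula \eqref{eq:der-psi-alpha} of Lemma~\ref{l:der-psi} compared against \eqref{eq:GR2}, and \eqref{eq:entSH-psi} by specializing to $\alpha=1$ through \eqref{eq:SHhtroughGR}. The only cosmetic difference is that you additionally spell out the evaluation $\psi(1,\lambda)=1$, which the paper leaves implicit.
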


\begin{proof}
Formulas \eqref{eq:entR-psi}--\eqref{eq:entSM-psi} follow directly from Definiton~\ref{def:entropies}, Proposition~\ref{prop:entropies-poisson}, and formula \eqref{psifunction}.
To prove \eqref{eq:entGR-psi}, we note that, by \eqref{eq:der-psi-alpha} from Appendix, we get, for $p_i(\lambda) = e^{-\lambda} \frac{\lambda^i}{i!}$,
\[
\frac{\partial}{\partial\alpha} \log \psi(\alpha, \lambda)
= \frac{\frac{\partial}{\partial\alpha} \psi(\alpha, \lambda)}{\psi(\alpha, \lambda)}
= \frac{\sum_{i=0}^\infty (p_i(\lambda))^\alpha\log p_i(\lambda)}{\sum_{i=0}^\infty (p_i(\lambda))^\alpha}
= -\entropy_{GR}(\alpha,\lambda),
\]
by the definition of $\entropy_{GR}$. Finally, \eqref{eq:entSH-psi} follows from \eqref{eq:entGR-psi} and \eqref{eq:SHhtroughGR}.
\end{proof}

The following proposition combines the monotonicity property of $\psi(\alpha,\lambda)$, established previously in \cite{braiman2024}, with one-side estimates and the asymptotic in $\lambda\to\infty$.

\begin{proposition}\label{prop:psi-monot}
\leavevmode
\begin{enumerate}[(i)]
    \item  For every $0 < \alpha < 1$, the function $\psi(\alpha,\lambda)$ strictly increases as a function of $\lambda$ on $(0,\infty)$, and 
    \[
    \psi(\alpha,\lambda)\geq \bigl(2\pi\lfloor\lambda\rfloor\bigr)^{\frac{1-\alpha}{2}}, \quad\lambda\geq1.
    \]
    \item 
    For every $\alpha > 1$, the function $\psi(\alpha,\lambda)$ strictly decreases as a function of $\lambda$ on $(0,\infty)$, and
     \[
    \psi(\alpha,\lambda)\leq \bigl(2\pi\lfloor\lambda\rfloor\bigr)^{-\frac{\alpha-1}{2}}, \quad\lambda\geq1.
    \]
    \item For all $\alpha>0$,
    \begin{equation}\label{eq:psi-asymp-main}
    \psi(\alpha,\lambda)\sim \frac{1}{ \sqrt{\alpha}}(2\pi\lambda)^{\frac{1-\alpha}{2}}, \qquad\lambda\to\infty.
    \end{equation}
\end{enumerate}
\end{proposition}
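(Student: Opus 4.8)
The plan is to handle the three parts separately, leaning on the cited monotonicity and reducing the genuinely new content to (a) a sharp bound on the mode of the Poisson law and (b) a Laplace-type analysis of the defining sum. For the monotonicity in (i) and (ii) I would invoke \cite{braiman2024}, where $\psi(\alpha,\lambda)$ is shown to increase (resp.\ decrease) in $\lambda$ for $0<\alpha<1$ (resp.\ $\alpha>1$); this rests on the identity $\frac{\partial}{\partial\lambda}\psi(\alpha,\lambda)=\frac{\alpha}{\lambda}\sum_{i\ge0}(i-\lambda)\bigl(p_i(\lambda)\bigr)^{\alpha}$, which comes from the Kolmogorov relation $\partial_\lambda p_i=p_{i-1}-p_i$. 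The one-sided bounds are then reduced to the inequalities already recorded in the proof of Proposition~\ref{prop:positivity}: with $\mu(\lambda):=\max_{i\ge0}p_i(\lambda)$, one has $\psi(\alpha,\lambda)\ge\mu(\lambda)^{\alpha-1}$ for $\alpha\in(0,1)$ and $\psi(\alpha,\lambda)\le\mu(\lambda)^{\alpha-1}$ for $\alpha>1$. Since $p_i(\lambda)/p_{i-1}(\lambda)=\lambda/i$ exceeds $1$ exactly when $i\le\lambda$, the maximum is attained at the mode $i=\lfloor\lambda\rfloor$, so $\mu(\lambda)=p_{\lfloor\lambda\rfloor}(\lambda)$.

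The key computation is the bound $\mu(\lambda)\le(2\pi\lfloor\lambda\rfloor)^{-1/2}$ for $\lambda\ge1$. Writing $n=\lfloor\lambda\rfloor\ge1$ and using the Stirling lower bound $n!\ge\sqrt{2\pi n}\,(n/e)^{n}$,
\[
p_n(\lambda)=\frac{e^{-\lambda}\lambda^{n}}{n!}\le\frac{1}{\sqrt{2\pi n}}\exp\bigl(n-\lambda+n\log(\lambda/n)\bigr)=\frac{1}{\sqrt{2\pi n}}\exp\bigl(n(\log x-x+1)\bigr)\le\frac{1}{\sqrt{2\pi n}},
\]
where $x=\lambda/n\ge1$ and the last step uses $\log x\le x-1$. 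Substituting $\mu(\lambda)\le(2\pi\lfloor\lambda\rfloor)^{-1/2}$ into the two inequalities of the previous paragraph, and noting that $\mu^{\alpha-1}$ is decreasing in $\mu$ when $\alpha<1$ and increasing when $\alpha>1$, gives $\psi(\alpha,\lambda)\ge(2\pi\lfloor\lambda\rfloor)^{(1-\alpha)/2}$ and $\psi(\alpha,\lambda)\le(2\pi\lfloor\lambda\rfloor)^{-(\alpha-1)/2}$ respectively, which are exactly the claimed estimates.

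For the asymptotics (iii) I would apply Laplace's method to the sum. Using the two-sided Stirling formula, write $p_i(\lambda)=\frac{1}{\sqrt{2\pi i}}\exp\bigl(-\lambda g(i/\lambda)\bigr)\exp\bigl(-\theta_i/(12i)\bigr)$ for $i\ge1$, with $g(x)=x\log x-x+1\ge0$ and $\theta_i\in(0,1)$, so that
\[
\psi(\alpha,\lambda)=e^{-\alpha\lambda}+\sum_{i\ge1}(2\pi i)^{-\alpha/2}\exp\bigl(-\alpha\lambda g(i/\lambda)\bigr)\exp\bigl(-\alpha\theta_i/(12i)\bigr).
\]
Since $g$ is convex with its unique minimum $g(1)=0$ and $g''(1)=1$, the summand concentrates on the scale $|i-\lambda|\lesssim\sqrt\lambda$, where $\lambda g(i/\lambda)=\frac{(i-\lambda)^2}{2\lambda}(1+o(1))$, the prefactor satisfies $(2\pi i)^{-\alpha/2}=(2\pi\lambda)^{-\alpha/2}(1+o(1))$, and the Stirling error factor tends to $1$. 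Comparing the resulting unit-spaced Gaussian sum with the integral $\int_{\R}\exp\bigl(-\alpha(x-\lambda)^2/(2\lambda)\bigr)\,dx=\sqrt{2\pi\lambda/\alpha}$ yields
\[
\psi(\alpha,\lambda)\sim(2\pi\lambda)^{-\alpha/2}\sqrt{2\pi\lambda/\alpha}=\frac{1}{\sqrt\alpha}(2\pi\lambda)^{\frac{1-\alpha}{2}},
\]
which is the assertion.

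The hard part will be to make this Laplace step rigorous uniformly in the summation index: one must control the cubic and higher remainders in $\lambda g(i/\lambda)-\frac{(i-\lambda)^2}{2\lambda}$ on the central scale $|i-\lambda|\le\lambda^{1/2+\delta}$ (which forces $\delta<1/6$), justify replacing the Gaussian sum by its integral (the summand is smooth and unimodal with width $\to\infty$, so the Riemann-sum error is relatively $o(1)$), and bound the contribution of $\{|i-\lambda|>\lambda^{1/2+\delta}\}$. Here the convexity and growth of $g$ are decisive: $\lambda g(i/\lambda)\ge(i-\lambda)^2/(2\lambda)$ for $i\le\lambda$ and $\lambda g(i/\lambda)\ge(i-\lambda)^2/(2i)$ for $i\ge\lambda$, so the tail contributes $o\bigl((2\pi\lambda)^{(1-\alpha)/2}\bigr)$. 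This avoids any separate probabilistic tail estimate and lets the single argument cover all $\alpha>0$.
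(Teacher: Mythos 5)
Your proposal is correct. For parts (i) and (ii) it follows the paper's proof essentially verbatim: monotonicity is quoted from \cite{braiman2024}, the bounds are obtained by combining the general inequalities \eqref{eq:alphage1}--\eqref{eq:alphale1} from the proof of Proposition~\ref{prop:positivity} with the mode bound $\mu(\lambda)\le(2\pi\lfloor\lambda\rfloor)^{-1/2}$, and your derivation of that mode bound (Stirling's lower bound for $\lfloor\lambda\rfloor!$ plus $\log x\le x-1$) is a slightly more compact version of the paper's Lemma~\ref{l:max-poisson}, which instead maximizes $(\lambda/x)^{x}e^{x}$ over $x\in(\lambda-1,\lambda]$.

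For part (iii) the underlying method is the same---Laplace asymptotics centered at $i\approx\lambda$, the same quadratic expansion, the same Gaussian integral $\sqrt{2\pi\lambda/\alpha}$---but the routes differ in where the discreteness is handled. The paper's Lemma~\ref{le:psi-asymp} begins by asserting $\sum_{n}\lambda^{\alpha n}/(n!)^\alpha\sim\int_0^\infty\lambda^{\alpha x}/\bigl(\Gamma(x+1)\bigr)^\alpha\,dx$ \emph{without justification}, and then runs a partly heuristic (``$\approx$''-based) saddle-point analysis of the integral; you keep the sum discrete, apply Stirling term by term, and only at the end compare the unit-spaced Gaussian sum with its integral. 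Your version makes explicit precisely the error sources the paper glosses over: the cubic remainder on the central window $|i-\lambda|\le\lambda^{1/2+\delta}$ (with the correct restriction $\delta<1/6$), the Riemann-sum error (relatively $o(1)$ since the peak height is $O(1)$ against an integral of order $\sqrt{\lambda}$), and the tails, for which your convexity bounds $\lambda g(i/\lambda)\ge(i-\lambda)^2/(2\lambda)$ for $i\le\lambda$ and $\lambda g(i/\lambda)\ge(i-\lambda)^2/(2i)$ for $i\ge\lambda$ are correct and sufficient. What the paper's route buys is brevity and the convenience of working with the smooth function $\Gamma(x+1)$; what yours buys is that the sum-to-integral passage---the one genuinely unproved step in the published lemma---becomes the identified and dischargeable burden of proof. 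Neither writeup carries out these estimates in full detail, but your outline names them correctly, so as a plan it is sound and, if anything, closer to a rigorous argument than the paper's own.
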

\begin{proof}   
The monotonicity of $\psi$ was proved in \cite[Theorem 2]{braiman2024}. Next, by Lemma~\ref{l:max-poisson}, for $\mathtt{p}=\{p_i(\lambda),i\geq0\}$,
\begin{equation}\label{eq:additional}
    \mu(\mathtt{p})=\max_{i\ge 0} p_i(\lambda)\leq \frac{1}{\sqrt{2\pi\lfloor\lambda\rfloor}}, \qquad \lambda\ge1. 
\end{equation}
Then the one-side bounds for $\psi(\alpha,\lambda)$ follow from \eqref{eq:alphage1}--\eqref{eq:alphale1}. Finally, the asymptotic \eqref{eq:psi-asymp-main} is proven in Lemma~\ref{le:psi-asymp}.
\end{proof}

\begin{theorem} \label{thm:limits}
\begin{itemize}
    \item[$(a)$]  The Shannon, R\'enyi and both generalised R\'enyi entropies of the Poisson distribution, for all admissible values of the parameters, converge to infinity as $\lambda$ tends to infinity. Moreover, for $\lambda\to\infty$,
    \begin{align}
    \entropy_{SH}(\lambda)&\sim \frac{1}{2}\log(2\pi\lambda)+\frac12;\label{eq:asySH}\\
    \entropy_R(\alpha,\lambda)&\sim \frac{1}{2}\log(2\pi\lambda)+\frac{\log\alpha}{2(\alpha-1)};\label{eq:asyR}\\
    \entropy_{GR}(\alpha,\beta,\lambda)&\sim \frac{1}{2}\log(2\pi\lambda)+\frac{\log\beta-\log\alpha}{2(\beta-\alpha)}.\label{eq:asyGR1}
\end{align}
    \item [$(b)$]  The Tsallis entropy of the Poisson distribution converges to infinity, as $\lambda$ tends to infinity, only for $\alpha \in (0,1)$, more precisely, 
    \[
    \entropy_{T} (\alpha, \lambda)\sim \frac{1}{ \sqrt{\alpha}(1-\alpha)}(2\pi\lambda)^{\frac{1-\alpha}{2}}, \qquad\lambda\to\infty, \ \alpha\in(0,1),
    \]
    and
    \[
    \lim_{\lambda\to\infty} \entropy_{T} (\alpha, \lambda) =
    \dfrac{1}{\alpha - 1}, \qquad \alpha > 1.
    \]
    \item [$(c)$]  The Sharma--Mittal entropy of the Poisson distribution converges to infinity, as $\lambda$ tends to infinity, only for $\beta \in (0,1)$, more precisely,  
    \[
        \entropy_{SM} (\alpha,\beta, \lambda)\sim \frac{1}{(1-\beta)\alpha^{\frac{1-\beta}{2(1-\alpha)}}}(2\pi\lambda)^{\frac{1-\beta}{2}}, \qquad\lambda\to\infty, \ \beta\in(0,1),
    \]
    and
    \[
    \lim_{\lambda\to\infty} \entropy_{SM} (\alpha, \beta, \lambda) =
    \dfrac{1}{\beta - 1}, \qquad \beta > 1.
    \]
    \end{itemize}
\end{theorem}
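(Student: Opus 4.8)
The plan is to derive all six asymptotics from the single relation $\psi(\alpha,\lambda)\sim \frac{1}{\sqrt\alpha}(2\pi\lambda)^{\frac{1-\alpha}{2}}$ established in Proposition~\ref{prop:psi-monot}$(iii)$, combined with the closed-form expressions \eqref{eq:entR-psi}--\eqref{eq:entSM-psi} for the entropies in terms of $\psi$. I would first write the $\psi$-asymptotic in the multiplicative form
\[
\psi(\alpha,\lambda)=\frac{1}{\sqrt\alpha}(2\pi\lambda)^{\frac{1-\alpha}{2}}\bigl(1+o(1)\bigr),\qquad\lambda\to\infty,
\]
so that everything reduces to algebra. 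For the Shannon entropy I would simply invoke \cite{boersma}, as indicated in the statement; the remaining five cases split into a ``logarithmic'' group (part $(a)$) and an ``affine'' group (parts $(b)$, $(c)$).

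For part $(a)$, the entropies $\entropy_R$ and $\entropy_{GR}(\alpha,\beta,\cdot)$ involve $\log\psi$, so I would take logarithms first, obtaining
\[
\log\psi(\alpha,\lambda)=\frac{1-\alpha}{2}\log(2\pi\lambda)-\frac12\log\alpha+o(1),
\]
since $\log(1+o(1))=o(1)$. Substituting into \eqref{eq:entR-psi} and dividing by $1-\alpha$ gives $\entropy_R(\alpha,\lambda)=\frac12\log(2\pi\lambda)+\frac{\log\alpha}{2(\alpha-1)}+o(1)$, which is \eqref{eq:asyR}; the remainder $o(1)/(1-\alpha)$ vanishes because $\alpha\ne1$ is fixed. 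For the two-parameter generalized Rényi entropy I would use \eqref{eq:entGGR-psi}, subtract the two logarithmic asymptotics for $\psi(\alpha,\lambda)$ and $\psi(\beta,\lambda)$, and divide by $\beta-\alpha$: the $\log(2\pi\lambda)$ terms combine with coefficient $\frac12$ and the constants yield $\frac{\log\beta-\log\alpha}{2(\beta-\alpha)}$, proving \eqref{eq:asyGR1}. In each case the leading term $\frac12\log(2\pi\lambda)$ diverges, so the additive $o(1)$ is negligible and the ratio tends to $1$, justifying the ``$\sim$''.

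For parts $(b)$ and $(c)$, $\entropy_T$ and $\entropy_{SM}$ are affine in a power of $\psi$, and the behavior splits according to whether that power diverges or vanishes. From \eqref{eq:entT-psi}, the exponent of $(2\pi\lambda)$ in $\psi(\alpha,\lambda)$ is $\frac{1-\alpha}{2}$: for $\alpha\in(0,1)$ it is positive, so $\psi(\alpha,\lambda)\to\infty$, the $-1$ is negligible, and $\entropy_T(\alpha,\lambda)\sim\frac{\psi(\alpha,\lambda)}{1-\alpha}\sim\frac{1}{\sqrt\alpha(1-\alpha)}(2\pi\lambda)^{\frac{1-\alpha}{2}}$; for $\alpha>1$ the exponent is negative, $\psi(\alpha,\lambda)\to0$, and $\entropy_T(\alpha,\lambda)\to\frac{0-1}{1-\alpha}=\frac{1}{\alpha-1}$. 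For Sharma--Mittal I would apply the asymptotic to the power $(\psi(\alpha,\lambda))^{\frac{1-\beta}{1-\alpha}}$ in \eqref{eq:entSM-psi}: raising $\frac{1}{\sqrt\alpha}(2\pi\lambda)^{\frac{1-\alpha}{2}}$ to the power $\frac{1-\beta}{1-\alpha}$ produces the exponent $\frac{1-\beta}{2}$ on $(2\pi\lambda)$ and the prefactor $\alpha^{-\frac{1-\beta}{2(1-\alpha)}}$. Crucially, this exponent depends only on $\beta$, so the dichotomy is governed by the sign of $1-\beta$ regardless of $\alpha$: for $\beta\in(0,1)$ the power diverges and gives the stated $(2\pi\lambda)^{\frac{1-\beta}{2}}$ asymptotic, while for $\beta>1$ it vanishes and $\entropy_{SM}\to\frac{1}{\beta-1}$.

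The only genuine subtlety, and the step I would be most careful about, is the passage in part $(a)$ from the multiplicative ``$\sim$'' for $\psi$ to the additive asymptotics for the entropies: one must check that the multiplicative error $1+o(1)$ contributes only an $o(1)$ term after taking logarithms and dividing by the fixed nonzero constants $1-\alpha$ and $\beta-\alpha$, so that the advertised constant terms $\frac{\log\alpha}{2(\alpha-1)}$ and $\frac{\log\beta-\log\alpha}{2(\beta-\alpha)}$ are captured exactly rather than merely up to a divergent discrepancy. Everything else is routine manipulation of the $\psi$-asymptotic.
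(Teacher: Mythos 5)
Most of your proposal is correct and coincides with the paper's own route: the asymptotics \eqref{eq:asyR}, \eqref{eq:asyGR1}, and those of parts $(b)$ and $(c)$ are indeed read off by inserting \eqref{eq:psi-asymp-main} into the representations \eqref{eq:entR-psi}--\eqref{eq:entSM-psi}, with the Shannon case delegated to \cite{boersma}; your handling of the multiplicative error $1+o(1)$ and of the dichotomy in the sign of $1-\alpha$, resp.\ $1-\beta$, is sound. (For the limits with $\alpha>1$, resp.\ $\beta>1$, the paper argues by a squeeze between \eqref{trivialT}/\eqref{trivialSM} and \eqref{eq:upperT}/\eqref{eq:upperSM}, but your direct argument via $\psi\to0$ is equally valid.)

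However, there is a genuine gap: part $(a)$ asserts that \emph{both} generalised R\'enyi entropies diverge, i.e.\ also the one-parameter entropy $\entropy_{GR}(\alpha,\lambda)$ of \eqref{eq:GR2}, and your proposal never addresses it --- your ``remaining five cases'' are R\'enyi, $\entropy_{GR}(\alpha,\beta,\cdot)$, Tsallis and Sharma--Mittal. This case cannot be obtained from \eqref{eq:psi-asymp-main} by the algebra you use elsewhere, because by \eqref{eq:entGR-psi} one has $\entropy_{GR}(\alpha,\lambda)=-\frac{\partial}{\partial\alpha}\log\psi(\alpha,\lambda)$, and an asymptotic relation in $\lambda$ cannot be differentiated in the parameter $\alpha$ without additional uniformity (nor can one simply let $\beta\to\alpha$ in \eqref{eq:asyGR1}, since the $o(1)$ error there depends on $\beta$). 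The paper closes this case --- and in fact all the divergence claims of part $(a)$ at once --- by a different mechanism: the distribution-free bounds \eqref{trivial1}--\eqref{trivialGR2} of Proposition~\ref{prop:positivity} give $\entropy(\lambda)\ge-\log\mu(\mathtt{p})$ for each of the four entropies, and Lemma~\ref{l:max-poisson} gives $\mu(\lambda)\le(2\pi\lfloor\lambda\rfloor)^{-1/2}$ for $\lambda\ge1$, whence in particular $\entropy_{GR}(\alpha,\lambda)\ge\frac12\log\bigl(2\pi\lfloor\lambda\rfloor\bigr)\to\infty$. Adding this one lower-bound argument would make your proof complete.
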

\begin{proof}
    \begin{itemize}
        \item[$(a)$] By \eqref{eq:additional},
        $-\log\mu(\mathtt{p})\geq \frac12\log\bigl(2\pi\lfloor\lambda\rfloor\bigr)\to\infty$, as $\lambda\to\infty$. Therefore, by \eqref{trivial1}--\eqref{trivialGR2}, 
          \begin{equation*}
      \entropy (\lambda)\geq \frac12\log\bigl(2\pi\lfloor\lambda\rfloor\bigr)\to\infty, \qquad \lambda\to\infty,
  \end{equation*}
  where $\entropy (\lambda)$ is either of $\entropy_{SH}(\lambda)$, $\entropy_R(\alpha,\lambda)$, $\entropy_{GR}(\alpha,\lambda)$, $\entropy_{GR}(\alpha,\beta,\lambda)$. The asymptotic \eqref{eq:asySH} was shown e.g. in \cite{boersma}. The asymptotic \eqref{eq:asyR} follows immediately from \eqref{eq:entR-psi} and \eqref{eq:psi-asymp-main}; note that $\frac{\log\alpha}{2(\alpha-1)}>0$ for each $\alpha>0$, $\alpha\neq1$. Finally, by \eqref{eq:entGGR-psi} and \eqref{eq:psi-asymp-main},
  \[
  \entropy_{GR}(\alpha,\beta,\lambda)=\frac1{\beta-\alpha} \log \frac{\psi(\alpha, \lambda)}{\psi(\beta, \lambda)}\sim \frac1{\beta-\alpha}\log\left(\sqrt{\frac{\beta}{\alpha}}(2\pi\lambda)^{\frac{\beta-\alpha}{2}}\right),
  \]
  that yields \eqref{eq:asyGR1}; note that $\frac{\log\beta-\log\alpha}{2(\beta-\alpha)}>0$ for all $\alpha,\beta>0$, $\alpha\neq\beta$.
        \item[$(b)$] For $\alpha\in(0,1)$, the statement follows from \eqref{trivialT} and \eqref{eq:additional}, as then 
        \begin{equation}\label{eq:powergrowthT}
            \entropy_{T}(\alpha, \lambda )\ge  \frac{1}{1-\alpha}\Bigl(\bigl(2\pi\lfloor\lambda\rfloor\bigr)^{\frac{1-\alpha}{2}} -1\Bigr)\to\infty, \quad \lambda\to\infty.
        \end{equation}
        The asymptotic follows immediately from \eqref{eq:entT-psi} and \eqref{eq:psi-asymp-main}.
        
        For $\alpha>1$, we get from \eqref{trivialT}, \eqref{eq:upperT}, and \eqref{eq:additional}, that
        \begin{equation}\label{eq:powergrowthT2}
            \frac{1}{\alpha-1}>\entropy_{T}(\alpha, \lambda )\ge  \frac{1}{\alpha-1}\Bigl(1-\bigl(2\pi\lfloor\lambda\rfloor\bigr)^{\frac{1-\alpha}{2}} \Bigr)\to \dfrac{1}{\alpha-1}, \quad \lambda\to\infty,
        \end{equation}
        that implies the statement.

        \item[$(c)$] Similarly to $(b)$, by using \eqref{trivialSM} and, for $\beta>1$, \eqref{eq:upperSM}, we will get that, for any $\alpha>0$, $\alpha\neq1$,
        \begin{equation}\label{eq:powergrowthSM}
            \entropy_{SM}(\alpha, \beta, \lambda )\ge  \frac{1}{1-\beta}\Bigl(\bigl(2\pi\lfloor\lambda\rfloor\bigr)^{\frac{1-\beta}{2}} -1\Bigr)\to\infty, \quad \lambda\to\infty,\  \beta\in(0,1),
        \end{equation}
        with the asymptotic coming from \eqref{eq:entSM-psi} and \eqref{eq:psi-asymp-main}; and
        \begin{equation}\label{eq:powergrowthSM2}
            \frac{1}{\beta-1}>\entropy_{SM}(\alpha, \beta, \lambda )\ge  \frac{1}{\beta-1}\Bigl(1-\bigl(2\pi\lfloor\lambda\rfloor\bigr)^{\frac{1-\beta}{2}} \Bigr)\to \dfrac{1}{\beta-1}, \quad \lambda\to\infty,\  \beta>1, 
        \end{equation}
        that proves the statement.
    \end{itemize}
\end{proof}

\section{Lower and upper bounds for entropies of the Poisson distribution}
\label{sec:bounds}

Theorem~\ref{thm:limits} shows that the Shannon, R\'enyi and both generalised R\'enyi entropies of the Poisson distribution grow logarithmically at infinity, whereas, the Tsallis entropy $\entropy_{T}(\alpha, \lambda )$ (for $0<\alpha<1$) and the Sharma--Mittal entropy $\entropy_{SM} (\alpha, \beta, \lambda)$ (for $0<\beta<1$) have the power growth to infinity. In this section we discuss 
double-side estimates for the entropies.

\begin{theorem}
\begin{enumerate}
    \item 
 For any $\lambda>1$,
 \begin{equation}\label{eq:lowerboundEnt}
     \entropy(\lambda)\geq \frac{1}{2}\log (2\pi\lambda) -h(\lambda)=:L(\lambda), 
 \end{equation}
where $\entropy (\lambda)$ is either of $\entropy_{SH}(\lambda)$, $\entropy_R(\alpha,\lambda)$, $\entropy_{GR}(\alpha,\lambda)$, $\entropy_{GR}(\alpha,\beta,\lambda)$ for all admissible values of the parameters and
\begin{equation}
        h(\lambda):= \frac{1}{2}\log\left(1+\frac{1}{(\lambda-1)\vee 1}\right)-\frac1{12\lambda+1}>0, \qquad \lambda>0.\label{eq:defh}
\end{equation}
 \item  For the Shannon entropy, the following upper bound holds
\begin{equation}
    \label{eq:sh-bound}
\entropy_{SH}(\lambda)\le    \frac{1}{2} \log {(2\pi \lambda)}+1+\frac{1}{6\lambda}=:U_{SH}(\lambda), \qquad\lambda>1.
\end{equation}

\item For the R\'enyi entropy, we define $\gamma_*:=\exp\left(-\dfrac{\pi}{e}\bigl(e^{\frac16}-1\bigr)\right)\approx 0.811$, and pick any $\gamma\in[\gamma_*,1)$. Then $\entropy_{R}(\alpha,\lambda)<U_{R}(\alpha,\lambda,\gamma)$ for all $\lambda>1$ and $\alpha>0$, $\alpha\neq1$, where:
\begin{enumerate}
    \item for $\alpha\in(0,1)$ and $\lambda>1$,
\begin{align}\notag
    U_{R}(\alpha,\lambda,\gamma):=&\frac{1}{1-\alpha}\Biggl(\log E_\alpha\left(\Bigl(\dfrac{\alpha}{\gamma}\lambda\Bigr)^\alpha\right)-\alpha\lambda\Biggr)+\frac{1}{2}\log\frac{\pi}{-e\log\gamma}
    \\&\quad +\frac\alpha{2(1-\alpha)}\log\alpha +\frac{1}{12\alpha(1-\alpha)}+\frac{1}{2}\log(1-\alpha).
    \label{eq:estR-up1}
\end{align}
\item for $\alpha>1$ and $\lambda>1$,
\begin{align}\notag
    U_{R}(\alpha,\lambda,\gamma):&=\frac{1}{\alpha-1}\Biggl(\alpha\lambda-\log E_\alpha\bigl( (\alpha\gamma\lambda)^\alpha \bigr)\Biggr)+\frac12\log\frac{\pi}{-e\log\gamma}\\&\quad -\frac{\alpha}{2(\alpha-1)}\log\alpha+\frac{\alpha}{12(\alpha-1)}+\frac12\log(\alpha-1).\label{eq:estR-up2}
\end{align}
\end{enumerate}
Here $E_\alpha$ denotes the Mittag--Leffler function.
\item For the Tsallis entropy, the estimates \eqref{eq:powergrowthT}
and \eqref{eq:powergrowthT2} hold true with $\bigl(2\pi\lfloor\lambda\rfloor\bigr)^{\frac{1-\alpha}{2}}$ replaced by $(2\pi\lambda)^{\frac{1-\alpha}{2}}e^{-h(\lambda)}$.
\item For the Sharma--Mittal entropy, the estimates \eqref{eq:powergrowthSM}
and \eqref{eq:powergrowthSM2} hold true with $\bigl(2\pi\lfloor\lambda\rfloor\bigr)^{\frac{1-\beta}{2}}$ replaced by $(2\pi\lambda)^{\frac{1-\beta}{2}}e^{-h(\lambda)}$.
\end{enumerate}
\end{theorem}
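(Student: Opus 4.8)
The backbone of the whole theorem is a sharpened bound on the modal probability. The plan is to replace the crude estimate $\mu(\mathtt p)\le(2\pi\lfloor\lambda\rfloor)^{-1/2}$ of Lemma~\ref{l:max-poisson} by
\[
\mu(\mathtt p)=\max_{i\ge0}p_i(\lambda)\le\frac{e^{h(\lambda)}}{\sqrt{2\pi\lambda}},\qquad\lambda>1,
\]
the maximum being attained at the mode $\lfloor\lambda\rfloor$. I would get this by applying Stirling with explicit remainder, $m!=\sqrt{2\pi m}\,(m/e)^m e^{r_m}$ with $\tfrac1{12m+1}<r_m<\tfrac1{12m}$, to $m=\lfloor\lambda\rfloor$, then using $(1+\delta/m)^m e^{-\delta}\le1$ for $\delta=\lambda-m$ and converting $m^{-1/2}=\lambda^{-1/2}\sqrt{1+\delta/m}$. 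Bounding $\delta/m\le 1/((\lambda-1)\vee1)$ and $r_m\ge\tfrac1{12m+1}\ge\tfrac1{12\lambda+1}$ reproduces exactly the function $h(\lambda)$ of \eqref{eq:defh}. Part~1 is then immediate, since $-\log\mu(\mathtt p)\ge\tfrac12\log(2\pi\lambda)-h(\lambda)=L(\lambda)$ and \eqref{trivial1}--\eqref{trivialGR2} bound each logarithmic entropy below by $-\log\mu(\mathtt p)$. For Parts~4 and 5 I would rerun the proof of Theorem~\ref{thm:limits}(b),(c) feeding in this sharper $\mu$-bound: raising it to the power $\alpha-1<0$ (resp.\ $\beta-1$) gives $\mu(\mathtt p)^{\alpha-1}\ge(2\pi\lambda)^{\frac{1-\alpha}{2}}e^{-(1-\alpha)h(\lambda)}\ge(2\pi\lambda)^{\frac{1-\alpha}{2}}e^{-h(\lambda)}$ because $1-\alpha\in(0,1)$, which is the asserted replacement in the divergent regimes $\alpha,\beta\in(0,1)$; the convergent regimes follow from the same substitution together with \eqref{eq:upperT} and \eqref{eq:upperSM}.

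For Part~2 I would start from $\entropy_{SH}(\lambda)=-\lambda\log\lambda+\lambda+\ex[\log(X_\lambda!)]$ (from $\log p_i=i\log\lambda-\lambda-\log i!$ and $\ex X_\lambda=\lambda$) and bound $\ex[\log(X_\lambda!)]$ above by the Stirling inequality $\log i!\le i\log i-i+\tfrac12\log(2\pi i)+\tfrac1{12i}$, summed over $i\ge1$. Three expectations must be controlled. First, the tangent line $\log x\le\log\lambda+(x-\lambda)/\lambda$ times $x\ge0$ gives $\ex[X_\lambda\log X_\lambda]\le\lambda\log\lambda+(\ex X_\lambda^2-\lambda^2)/\lambda=\lambda\log\lambda+1$. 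Second, the same tangent line applied to $\tfrac12\log(2\pi i)$ yields $\tfrac12\ex[\log(2\pi X_\lambda)\ind_{X_\lambda\ge1}]\le\tfrac12\log(2\pi\lambda)(1-e^{-\lambda})+\tfrac12 e^{-\lambda}<\tfrac12\log(2\pi\lambda)$, the strict inequality holding because $\log(2\pi\lambda)>1$ for $\lambda>1$. Third, $\tfrac1{12}\ex[X_\lambda^{-1}\ind_{X_\lambda\ge1}]\le\tfrac16\ex[(X_\lambda+1)^{-1}]=\tfrac{1-e^{-\lambda}}{6\lambda}\le\tfrac1{6\lambda}$, using $X^{-1}\le2(X+1)^{-1}$ for $X\ge1$ and the Poisson identity $\ex[(X_\lambda+1)^{-1}]=(1-e^{-\lambda})/\lambda$. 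Collecting these, the $\pm\lambda\log\lambda$ and $\pm\lambda$ terms cancel and leave $\entropy_{SH}(\lambda)<\tfrac12\log(2\pi\lambda)+1+\tfrac1{6\lambda}=U_{SH}(\lambda)$.

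Part~3 is the main obstacle. Writing $\entropy_R(\alpha,\lambda)=\tfrac1{1-\alpha}\bigl(\log\sum_{i\ge0}(\lambda^i/i!)^\alpha-\alpha\lambda\bigr)$ via \eqref{eq:entR-psi} and \eqref{psifunction}, the idea is to compare the series $\sum_i(\lambda^i/i!)^\alpha$ term by term with the Mittag--Leffler series $E_\alpha(z)=\sum_i z^i/\Gamma(\alpha i+1)$ evaluated at $z=(\alpha\lambda/\gamma)^\alpha$ (for $\alpha<1$) or $z=(\alpha\gamma\lambda)^\alpha$ (for $\alpha>1$). Two-sided Stirling on both $i!$ and $\Gamma(\alpha i+1)$ reduces the required term-by-term inequality to bounding, uniformly in $i$,
\[
\frac{\Gamma(\alpha i+1)}{(i!)^\alpha}\Bigl(\tfrac{\gamma}{\alpha}\Bigr)^{\alpha i}
=(2\pi)^{\frac{1-\alpha}{2}}\,\alpha^{1/2}\,i^{\frac{1-\alpha}{2}}\,\gamma^{\alpha i}\,e^{\,r_{\alpha i}-\alpha r_{i}},
\]
(and its reciprocal for $\alpha>1$), where $r_x\in(\tfrac1{12x+1},\tfrac1{12x})$ is the Stirling remainder of $\Gamma(x+1)$. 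The polynomial-times-geometric part $i^{(1-\alpha)/2}\gamma^{\pm\alpha i}$ has an explicit extremum at $i^\ast=(1-\alpha)/(-2\alpha\log\gamma)$ of value $\bigl((1-\alpha)/(-2e\alpha\log\gamma)\bigr)^{(1-\alpha)/2}$, and the remainders contribute $e^{1/(12\alpha)}$ (resp.\ $e^{\alpha/12}$); multiplying these out, then taking $\log$ and dividing by $1-\alpha$, is precisely what produces the constants $\tfrac12\log\tfrac{\pi}{-e\log\gamma}$, $\tfrac{\alpha}{2(1-\alpha)}\log\alpha$, $\tfrac1{12\alpha(1-\alpha)}$ and $\tfrac12\log(1-\alpha)$ in \eqref{eq:estR-up1}--\eqref{eq:estR-up2}. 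Summing the term-by-term estimate gives $\sum_i(\lambda^i/i!)^\alpha\le K\,E_\alpha((\alpha\lambda/\gamma)^\alpha)$ (resp.\ $\ge K^{-1}E_\alpha$), and the $-\alpha\lambda$ cancels. The delicate points, which I expect to be the real work, are (i) checking legitimacy of the extremal bound at the integer anchor $i=0$, where both series start at $1$ and hence the multiplicative constant is forced to satisfy $K\ge1$, and (ii) verifying this normalization uniformly in $\alpha$; tracking the worst case over $\alpha$ is exactly what pins the admissible range $\gamma\in[\gamma_\ast,1)$ with $\gamma_\ast=\exp(-\tfrac{\pi}{e}(e^{1/6}-1))$. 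The bulk of the effort is the bookkeeping that matches the optimized constant to the stated closed form, rather than any single hard inequality.
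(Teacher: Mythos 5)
Your proposal follows the paper's own proof route in all five parts: Part 1 rests on exactly the paper's sharpened modal bound $\mu(\lambda)<e^{h(\lambda)}/\sqrt{2\pi\lambda}$ (estimate \eqref{eq:mulambdale1overlaenhanced} of Lemma~\ref{l:max-poisson}, obtained the same way from Stirling with explicit remainder), fed into \eqref{trivial1}--\eqref{trivialGR2}; Part 2 is the paper's Stirling-plus-Jensen treatment of $\ex[\log (X_\lambda!)]$, where your tangent-line bound $\ex[X_\lambda\log X_\lambda]\le\lambda\log\lambda+1$ is a harmless variant of the paper's index shift $\sum_k p_k\,k\log\frac ke=\lambda\,\ex\log\frac{X_\lambda+1}{e}\le\lambda\log\frac{\lambda+1}{e}$ (both produce the additive constant $1$); and Part 3 is exactly the paper's Lemma~\ref{le:estforpsi}: term-by-term comparison with the Mittag--Leffler series via two-sided Stirling, with the extremal function $x^{|1-\alpha|/2}\gamma^{\alpha x}$ maximized at $|1-\alpha|/(-2\alpha\log\gamma)$ and the remainder factors $e^{1/(12\alpha)}$, $e^{-\alpha/12}$.

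The one genuine gap is in Part 3: you identify, but do not carry out, the verification that the normalization $K\ge1$ (for $\alpha<1$; $K\le1$ for $\alpha>1$) holds \emph{uniformly} in $\alpha$ precisely when $\gamma\ge\gamma_*$. This is not routine bookkeeping, and it is the only step that ties the bound to the specific constant $\gamma_*$. The paper reduces $C_1(\alpha,\gamma)\ge1$ and $C_2(\alpha,\gamma)\le1$ to $\log(-\log\gamma)\le r_1(\alpha)$ and $\log(-\log\gamma)\le r_2(\bar\alpha)$ with $\bar\alpha=1/\alpha$, proves $\inf r_2\le\inf r_1$ by a pointwise comparison, and then minimizes $r_2(\bar\alpha)=\frac{\bar\alpha}{1-\bar\alpha}\log\bar\alpha+\frac1{6(1-\bar\alpha)}+\log(1-\bar\alpha)+\log\frac\pi e$ explicitly, finding the minimum at $\bar\alpha=e^{-1/6}$ with value $\log\bigl(\frac\pi e\bigl(e^{1/6}-1\bigr)\bigr)$; exponentiating twice gives $\gamma_*$. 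Without this computation your argument only yields the estimate for $\gamma$ sufficiently close to $1$ \emph{depending on} $\alpha$, not on the stated uniform range $[\gamma_*,1)$. A further caution on Parts 4--5: in the convergent regimes ($\alpha>1$, resp.\ $\beta>1$) the substitution of $\mu<e^{h(\lambda)}/\sqrt{2\pi\lambda}$ into \eqref{trivialT} and \eqref{trivialSM} produces the factor $(2\pi\lambda)^{\frac{1-\alpha}{2}}e^{(\alpha-1)h(\lambda)}$, not $(2\pi\lambda)^{\frac{1-\alpha}{2}}e^{-h(\lambda)}$, so ``the same substitution'' does not literally give the stated replacement in \eqref{eq:powergrowthT2} (indeed, at $\alpha=2$, $\lambda=1.1$ one has $\psi(2,\lambda)=e^{-2\lambda}\sum_i \lambda^{2i}/(i!)^2\approx0.2913$ while $(2\pi\lambda)^{-1/2}e^{-h(\lambda)}\approx0.2886$, so the literal replacement fails); this defect is inherited from the paper's own one-line proof of Parts 4--5, so it counts against the statement rather than against your reconstruction, but it is worth flagging.
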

\begin{proof}
1) The lower bounds for $\entropy_{SH}(\lambda)$, $\entropy_R(\alpha,\lambda)$, $\entropy_{GR}(\alpha,\lambda)$, $\entropy_{GR}(\alpha,\beta,\lambda)$ follow immediately from the general estimates \eqref{trivial1}--\eqref{trivialGR2} and the upper bound \eqref{eq:mulambdale1overlaenhanced} for $\mu(\mathtt{p})=\mu(\lambda)$ which reads as follows:
\begin{equation}
    \mu(\lambda)<\frac{1}{\sqrt{2\pi\lambda}} e^{h(\lambda)}, \qquad \lambda>1, \label{eq:muenhanced}
\end{equation}
where $h(\lambda)>0$ is given by \eqref{eq:defh}.

2) For the upper bound for $\entropy_{SH}(\lambda)$, we note that \eqref{eq:exprSH} trivially implies
\begin{equation}\label{eq:trivestSH}
    \entropy_{SH}(\lambda) \le -\lambda  \log  \frac{\lambda}{e} +\sum _{k=1}^{\infty} \left( e^{-\lambda} \frac{\lambda^k}{k!} \right) \log{k!}
\end{equation}
According to the second inequality in \eqref{eq:stirlingest}, for any $k\ge 1$,
\begin{equation*}
\log{k!} \le \frac{1}{2} \log{(2\pi k)} +k\log{\frac{k}{e}} + \frac{1}{12k}.
\end{equation*}

Recall that $X_\lambda$ denotes a random variable having Poisson distribution with parameter $\lambda$. Then, by Jensen's inequality,
\begin{align*}
\frac{1}{2}\sum_{k=1}^{\infty}\left(e^{-\lambda}\frac{\lambda^k}{k!}\right)\log {(2\pi k)} 
&=\frac{1}{2} \ex \log\left(2\pi X_\lambda \ind_{X_\lambda \ge 1}\right) \\& \le \frac{1}{2} \log \ex \left(2\pi X_\lambda \ind_{X_\lambda \ge 1}\right) \le \frac{1}{2}\log \ex (2\pi X_\lambda) = \frac{1}{2} \log{(2\pi \lambda)}
\end{align*}
Similarly, 
\begin{align*}
\sum_{k=1}^{\infty} \left(e^{-\lambda}\frac{\lambda^k}{k!}\right)k\log {\frac{k}{e}} &= \sum_{k=0}^{\infty}\left(e^{-\lambda}\frac{\lambda^{k+1}}{k!}\right)\log {\frac{k+1}{e}} \\ 
&=  \lambda \ex  \log \frac{X_\lambda +1}{e} \le \lambda \log \ex \frac{X_\lambda+1}{e} = \lambda \log \frac{\lambda+1}{e};
\end{align*}
and finally, 
\begin{align*}
  \sum_{k=1}^{\infty} \left(e^{-\lambda}\frac{\lambda^k}{k!}\right) \frac{1}{12k}<\frac16\sum_{k=1}^{\infty} \left(e^{-\lambda}\frac{\lambda^k}{k!}\right) \frac{1}{k+1} = \frac16\frac{e^{-\lambda}}{\lambda}\sum_{k=2}^{\infty}\frac{\lambda^k}{k!}<\frac{1}{6\lambda}.
\end{align*}
Combining these estimates altogether, we get from \eqref{eq:trivestSH}:
\begin{align*}
   \entropy_{SH}(\lambda) &< - \lambda \log \frac{\lambda}{e} + \frac{1}{2} \log(2\pi \lambda) + \lambda \log \frac{\lambda+1}{e}  + \frac{1}{6\lambda}\notag\\ & =\frac{1}{2}\log(2\pi \lambda)+\log \left(1+\frac{1}{\lambda}\right)^{\lambda}  + \frac{1}{6\lambda} \notag\\ &< \frac{1}{2} \log {(2\pi \lambda)}+1+ \frac{1}{6\lambda},
\end{align*}
that fulfills the proof.

3) The upper bound for $\entropy_{R}(\lambda)$  follows immediately from \eqref{eq:entR-psi} and Lemma~\ref{le:estforpsi}.

4) The statement follows from \eqref{trivialT} and \eqref{eq:muenhanced}.

5) The statement follows \eqref{trivialSM} and \eqref{eq:muenhanced}.
\end{proof}

\begin{remark}
We are going to illustrate the obtained estimates. 
\begin{enumerate}
    \item The lower and upper bounds \eqref{eq:lowerboundEnt} and 
\eqref{eq:sh-bound} for the Shannon entropy are agreed with the leading term $\frac12\log(2\pi\lambda)$ of the asymptotics \eqref{eq:asySH}.
Surprisingly, the asymptotic function $A_{SH}(\lambda)$ defined by the right hand side of \eqref{eq:asySH} provides a perfect approximation for the Shannon entropy for \emph{all} $\lambda>1$, see Figure~\ref{fig:estimShannon}.

\item The upper estimates \eqref{eq:estR-up1} for the R\'enyi entropy is not asymptotically exact. Indeed, for e.g. $\alpha\in(0,1)$,  $E_\alpha(x)\sim \frac{1}{\alpha}\exp\bigl(x^\frac{1}{\alpha}\bigr)$, $x\to\infty$, therefore, the function $U_{R}(\alpha,\lambda,\gamma)$ given by \eqref{eq:estR-up1} has the following asymptotic as $\lambda\to\infty$
\[
\frac{\alpha}{1-\alpha}\left(\frac{1}{\gamma}-1\right)\lambda+C(\alpha,\gamma),
\]
that grows faster than the asymptotic function $A_{R}(\lambda)$ defined by the right hand side of \eqref{eq:asyR}. 
Note also that $C(\alpha,\gamma)$ grows for $\alpha$ becoming close either to $0$ or to $1$. It can be shown numerically, that for each $\alpha$ and for each finite interval $[1,\lambda_1[$ there exists an optimal $\gamma_1\in[\gamma_*,1)$ which minimizes $\sup_{\lambda\in[1,\lambda_1]} \bigl(U_{R}(\alpha,\lambda,\gamma_1)-\entropy_R(\alpha,\lambda)\bigr)$.  Again, surprisingly, the asymptotic function $A_{R}(\alpha, \lambda)$ provides a perfect approximation for the R\'enyi entropy for \emph{all} $\lambda>1$, see Figure~\ref{fig:estimRenyi}. As expected, the upper bound is getting worth for $\alpha$ close to $0$ and $1$. Also, in contrast to the upper bound for the Shannon entropy, it will become less reliable with the growth of $\lambda$.
\end{enumerate}
\end{remark}

\begin{figure}
\centering \includegraphics[width=0.6\textwidth]{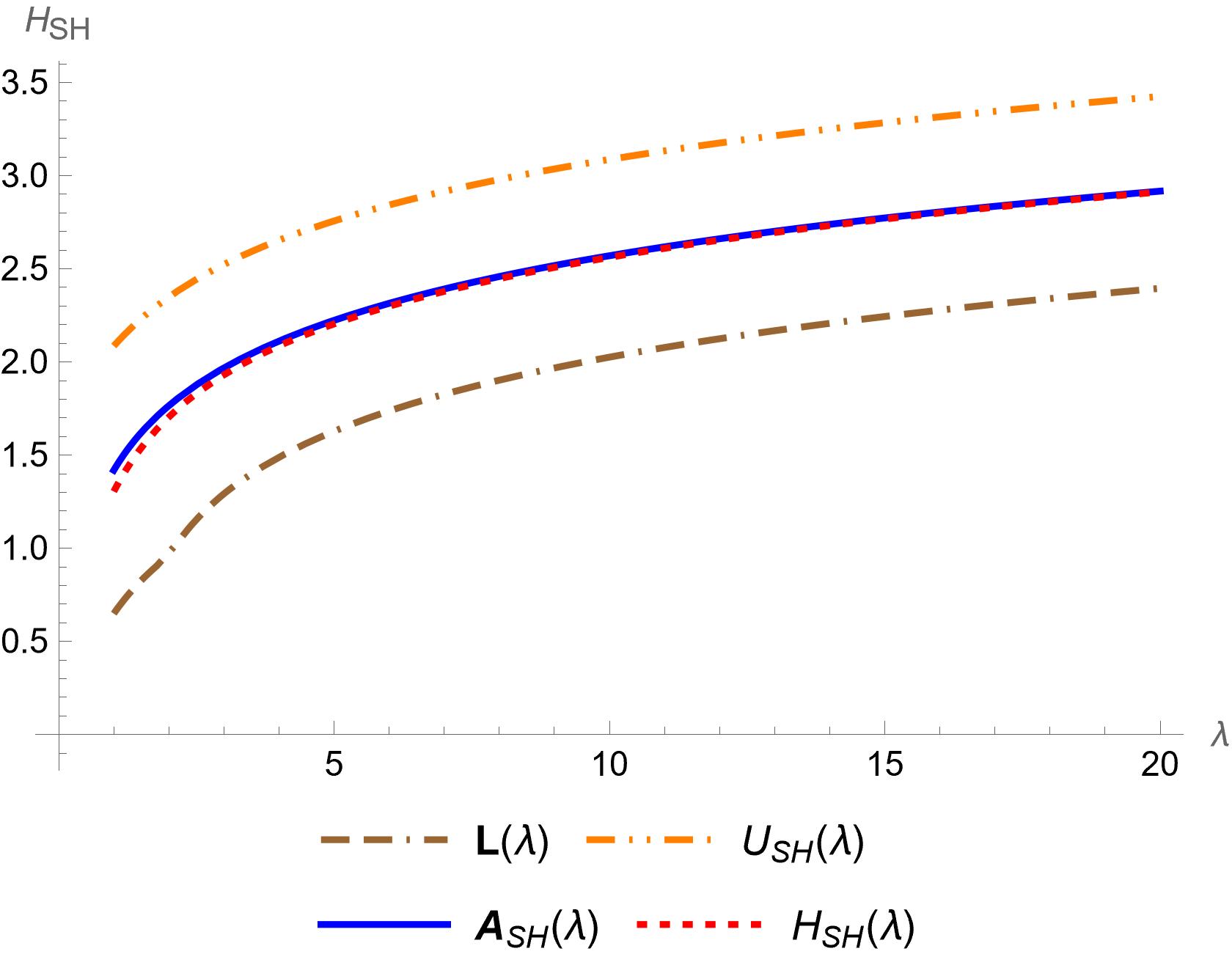}
\caption{A comparison of the Shannon entropy $\entropy_{SH}(\lambda)$ with the lower estimate $L(\lambda)$ given by \eqref{eq:lowerboundEnt}, the upper estimate $U_{SH}(\lambda)$ given by \eqref{eq:sh-bound}, and the asymptotic function $A_{SH}(\lambda)$ defined by the right hand side of \eqref{eq:asySH}.}
\label{fig:estimShannon}
\end{figure}

\begin{figure}
\centering 
\includegraphics[width=\textwidth]{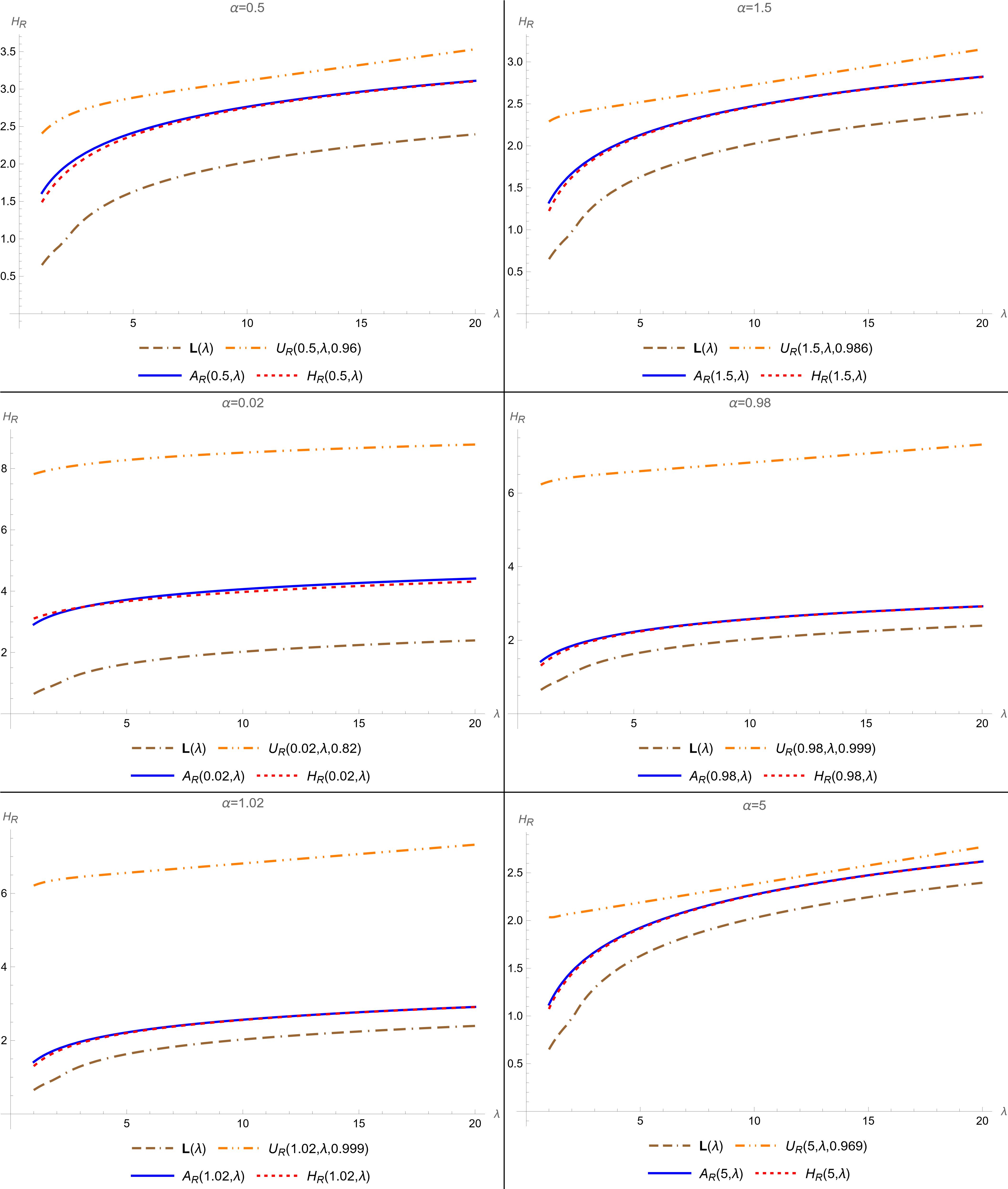}
\caption{A comparison of the R\'enyi entropy $\entropy_{R}(\alpha,\lambda)$ with the lower estimate $L(\lambda)$ given by \eqref{eq:lowerboundEnt}, the upper estimate $U_{R}(\alpha,\lambda,\gamma)$ given by \eqref{eq:estR-up1}--\eqref{eq:estR-up2} (with optimal values of $\gamma$ for $\lambda\in[1,20])$, and the asymptotic function $A_{R}(\alpha, \lambda)$ defined by the right hand side of \eqref{eq:asyR}.}
\label{fig:estimRenyi}
\end{figure}

\section{Monotonicity of entropies of Poisson distribution as
the function of intensity parameter $\lambda$}
\label{sec:monotonicity}

As it was explained in Introduction, it is reasonable to expect that the entropy of the Poisson distribution increases with intensity $\lambda$. It is natural to characterize such behavior as ``normal''. However, the most interesting cases are those where ``normal'' behavior is violated and entropies behave in strange ways. Such behavior can be naturally called ``anomalous''.

A summary of this Section is that Shannon, R\'enyi, Tsallis, and  Sharma--Mittal entropies always behave ``normally'', while both generalized R\'enyi entropies exhibit ``anomalous'' behavior for some values of their parameters.

\subsection{``Normal`` behavior}

\begin{proposition}\label{prop:normal}
    For the Poisson distribution with parameter $\lambda>0$: 
    \begin{enumerate}[(i)]
    \item {\normalfont\textrm{\cite[Theorem 1]{braiman2024}}} The Shannon entropy $\entropy_{SH}(\lambda)$ is strictly increasing and concave as a function of $\lambda\in(0,\infty)$. 
\item {\normalfont\textrm{\cite[Corollary 1]{braiman2024}}}
    For any $\alpha \in (0,1) \cup (1,\infty)$, the R\'enyi entropy $\entropy_R(\alpha,\lambda)$ strictly increases as a function of $\lambda\in(0,\infty)$.
    \item 
    For any $\alpha \in (0,1) \cup (1,\infty)$, the Tsallis entropy $\entropy_{T}(\alpha, \lambda)$ strictly increases as a function of~$\lambda\in(0,\infty)$.
    \item 
    For any $\alpha,\beta \in (0,1) \cup (1,\infty)$, the  Sharma--Mittal entropy $\entropy_{SM}(\alpha, \beta, \lambda)$ strictly increases as a function of $\lambda\in(0,\infty)$.
    \end{enumerate}
\end{proposition}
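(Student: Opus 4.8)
The plan is to reduce the two genuinely new claims, parts $(iii)$ and $(iv)$ — parts $(i)$ and $(ii)$ being quoted verbatim from \cite{braiman2024} — to the strict monotonicity of $\psi(\alpha,\lambda)$ in $\lambda$ established in Proposition~\ref{prop:psi-monot}, together with careful sign bookkeeping. The representations \eqref{eq:entT-psi} and \eqref{eq:entSM-psi} exhibit each of $\entropy_T$ and $\entropy_{SM}$ as a composition $g\circ\psi(\alpha,\cdot)$ of the strictly monotone inner map $\lambda\mapsto\psi(\alpha,\lambda)$ with an explicit outer function $g$ of one positive variable. Since a composition of two strictly increasing (or of two strictly decreasing) functions is strictly increasing, it suffices in each case to verify that the monotonicity direction of $g$ matches that of $\psi(\alpha,\cdot)$.

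For the Tsallis entropy I would take $g(t)=\frac{1}{1-\alpha}(t-1)$, which is strictly increasing when $\alpha\in(0,1)$ and strictly decreasing when $\alpha>1$. By Proposition~\ref{prop:psi-monot}, $\psi(\alpha,\cdot)$ is, respectively, strictly increasing for $\alpha\in(0,1)$ and strictly decreasing for $\alpha>1$, so the two directions coincide in both regimes and $\entropy_T(\alpha,\lambda)=g\bigl(\psi(\alpha,\lambda)\bigr)$ is strictly increasing in $\lambda$.

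For the Sharma--Mittal entropy the outer map is $g(t)=\frac{1}{1-\beta}\bigl(t^{\frac{1-\beta}{1-\alpha}}-1\bigr)$ for $t>0$. The key computation is that its derivative simplifies as
\[
g'(t)=\frac{1}{1-\beta}\cdot\frac{1-\beta}{1-\alpha}\,t^{\frac{1-\beta}{1-\alpha}-1}=\frac{1}{1-\alpha}\,t^{\frac{1-\beta}{1-\alpha}-1},
\]
so that the factor $1-\beta$ cancels, and since $t^{\frac{1-\beta}{1-\alpha}-1}>0$ the sign of $g'$ is exactly $\operatorname{sign}(1-\alpha)$, independently of $\beta$. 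Hence $g$ is strictly increasing for $\alpha\in(0,1)$ and strictly decreasing for $\alpha>1$, which again matches the monotonicity of $\psi(\alpha,\cdot)$; the composition $\entropy_{SM}(\alpha,\beta,\lambda)=g\bigl(\psi(\alpha,\lambda)\bigr)$ is therefore strictly increasing in $\lambda$ for every admissible $\alpha,\beta$.

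Once Proposition~\ref{prop:psi-monot} is in hand this is essentially sign bookkeeping, so there is no serious analytic obstacle. The one point that must not be overlooked — and the only place where the argument could go wrong — is the cancellation of the $\beta$-dependent factor in $g'$: it is precisely this cancellation that forces the monotonicity of the outer transformation to be governed by $\operatorname{sign}(1-\alpha)$, the same quantity that dictates whether $\psi(\alpha,\cdot)$ increases or decreases. Were the outer direction and the inner direction controlled by different parameters, the composition could fail to be monotone. Note also that the positivity $\psi(\alpha,\lambda)>0$ from \eqref{psifunction} is what makes $t^{\frac{1-\beta}{1-\alpha}-1}$ well-defined and positive on the range of $\psi$, so only the strict monotonicity of $\psi$ is needed, not its differentiability.
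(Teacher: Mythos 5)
Your proposal is correct and follows essentially the same route as the paper: both reduce parts (iii) and (iv) to the strict monotonicity of $\psi(\alpha,\cdot)$ from Proposition~\ref{prop:psi-monot} and then check that the monotonicity direction of the outer transformation matches that of the inner one. The only cosmetic difference is in (iv), where the paper first notes that $\lambda\mapsto(\psi(\alpha,\lambda))^{\frac{1}{1-\alpha}}$ increases and then case-splits on $\beta$, while you differentiate the outer map and observe the cancellation of the $1-\beta$ factor --- the same sign bookkeeping, packaged differently.
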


\begin{proof}
For $(i)$, see the proof of \cite[Theorem 1]{braiman2024}. 

Both $(ii)$ and $(iii)$ follow immediately from Proposition~\ref{prop:psi-monot}. Indeed, for $0<\alpha<1$, both functions $\log\psi(\alpha,\lambda)$ and $\psi(\alpha,\lambda)-1$ increase in $\lambda>0$ and since then $1-\alpha>0$, both entropies $\entropy_{R}$ and $\entropy_{TS}$ increase. Similarly, for $\alpha>1$, both these functions decrease in $\lambda>0$ but then in $1-\alpha<0$, so the corresponding entropies increase anyway.

To prove $(iv)$, we notice that, by Proposition~\ref{prop:psi-monot}, the function
$\lambda\mapsto (\psi(\alpha,\lambda))^{\frac{1}{1 - \alpha}}$ strictly increases in both cases $0 < \alpha < 1$ and $\alpha >1$. Now consider two cases:

\begin{itemize}
 \item[-] If $0 < \beta < 1$, then $(\psi(\alpha,\lambda))^{\frac{1 - \beta}{1 - \alpha}}$ increases  in $\lambda$,  and since $1-\beta>0$, the entropy $\entropy_{SM}(\alpha, \beta, \lambda)$  also increases in $\lambda$.
\item[-] If $\beta > 1$, then $(\psi(\alpha,\lambda))^{\frac{1 - \beta}{1 - \alpha}}$ decreases in $\lambda$, but then $1-\beta<0$, hence, the entropy $\entropy_{SM}(\alpha, \beta, \lambda)$  increases in $\lambda$ anyway. \qedhere
\end{itemize}
\end{proof}

\begin{remark}
Figure~\ref{fig:tsallis} illustrates the graph of Tsallis entropy as a function of variables $\alpha$ and $\lambda$. The graph confirms that the Tsallis entropy is a positive function, increasing with respect to $\lambda$ for any $\alpha > 0$ and $\alpha \neq 1$, which is consistent with our theoretical findings. Furthermore, the entropy decreases with respect to $\alpha$ for any $\lambda > 0$, aligning with theoretical expectations. This behavior is explained by the sum $\sum_{i = 0}^{\infty} p_i^\alpha$ in the definition \eqref{eq:T}, which decreases as $\alpha$ increases. Note also that for $\alpha>1$, the entropy is bounded by/increases to $\frac{1}{1-\alpha}$, see \eqref{eq:powergrowthT2}.

\begin{figure}
\centering
\begin{subfigure}{0.48\textwidth}
    \includegraphics[width=\textwidth]{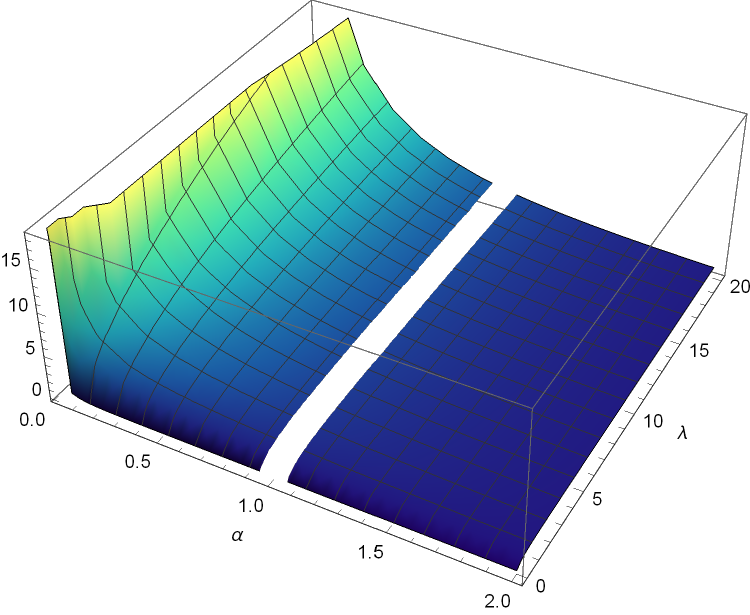}
    \caption{$\entropy_T(\alpha,\lambda)$ as a function of $\alpha$ and $\lambda$}
\end{subfigure}
\hfill
\begin{subfigure}{0.48\textwidth}
    \includegraphics[width=\textwidth]{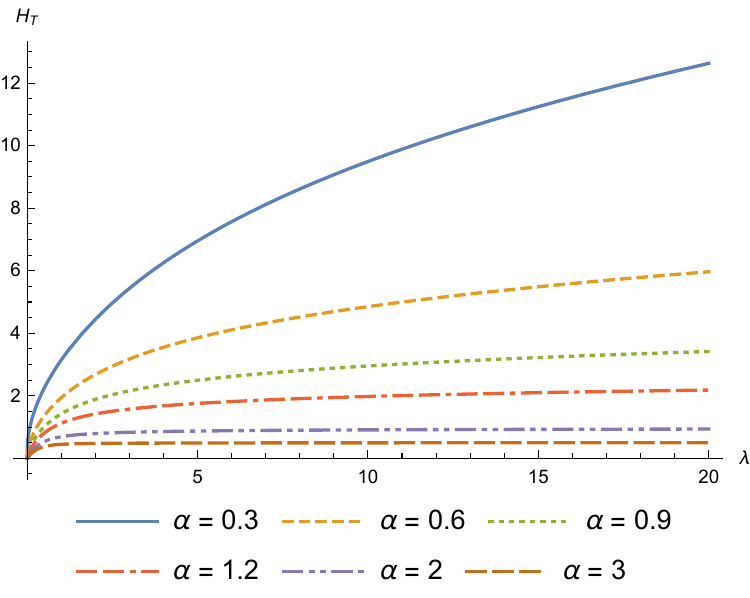}
    \caption{$\entropy_T(\alpha,\lambda)$ as a function of~$\lambda$ for various fixed $\alpha$}
\end{subfigure}
\caption{The Tsallis entropy $\entropy_T(\alpha,\lambda)$}
\label{fig:tsallis}
\end{figure}
\end{remark}

\begin{remark}
Figures~\ref{fig:sm-alpha=2}--\ref{fig:sm-beta=2} present the behavior of Sharma--Mittal entropy for the Poisson distribution under various parameter settings ($\alpha > 0$, $\beta > 0$, $\alpha\neq\beta$). In all presented cases, the entropy $\entropy_{SM}(\alpha, \beta, \lambda)$ increases as a function of $\lambda$. 

Furthermore, we can see that the Sharma--Mittal entropy decreases with respect to $\alpha$ for any fixed $\beta$ and $\lambda$, and similarly decreases with respect to $\beta$ for any fixed $\alpha$ and $\lambda$ in line with the theoretical prediction. Indeed, such monotonic behavior in $\alpha$ and $\beta$ can be generalized to any discrete distribution, not just the Poisson distribution, by considering the sum $\sum_{i = 0}^{\infty} p_i^\alpha$ in the definition \eqref{eq:SM}, which decreases with increasing $\alpha$.

Finally, on Figure~\ref{fig:sm-alpha=2}, we can see that the functions  $\entropy_{SM}(2, \beta, \lambda)$ grow to different finite values when $\beta>1$ (specifically, to $\frac1{\beta-1}$, see \eqref{eq:powergrowthSM2}). In contrast, on Figure~\ref{fig:sm-beta=2}, we can see that all the functions $\entropy_{SM}(\alpha, 2, \lambda)$ have the same limiting behavior (and apparently converge to $1=\frac1{\beta-1}\bigr\rvert_{\beta=2}$) for different $\alpha$. This is not the case when $\beta\in(0,1)$ (see Theorem~\ref{thm:limits}(c)) as then $\entropy_{SM}(\alpha, \beta, \lambda)$ grows as $c(\alpha,\beta)\lambda^{\frac{1-\beta}{2}}$ for all $\alpha$ that is reflected in Figure~\ref{fig:sm-beta=0.5}.
\end{remark}

\begin{figure}
\centering
    \begin{subfigure}{0.48\textwidth}
        \includegraphics[width=\textwidth]{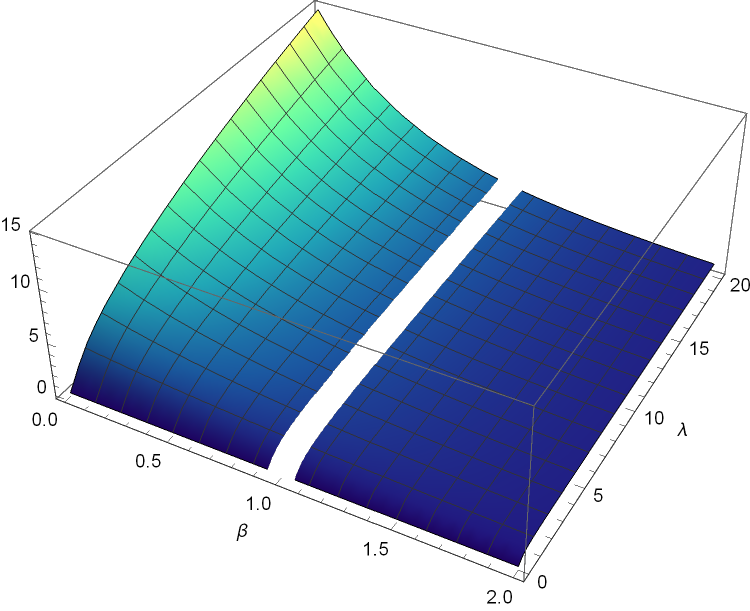}
        \caption{$\entropy_{SM}(2, \beta, \lambda)$ as a function of $\beta$ and $\lambda$}
    \end{subfigure}
    \hfill
    \begin{subfigure}{0.47\textwidth}
        \includegraphics[width=\textwidth]{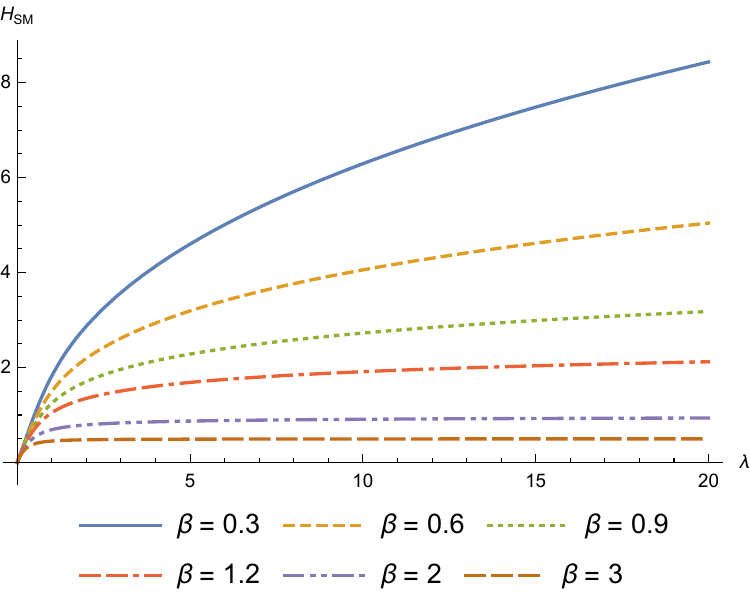}
        \caption{$\entropy_{SM}(2, \beta, \lambda)$ as a function $\lambda$ for various $\beta$}
    \end{subfigure}
\caption{The Sharma--Mittal entropy for $\alpha = 2$}
\label{fig:sm-alpha=2}
\end{figure}

\begin{figure}
\centering
    \begin{subfigure}{0.48\textwidth}
        \includegraphics[width=\textwidth]{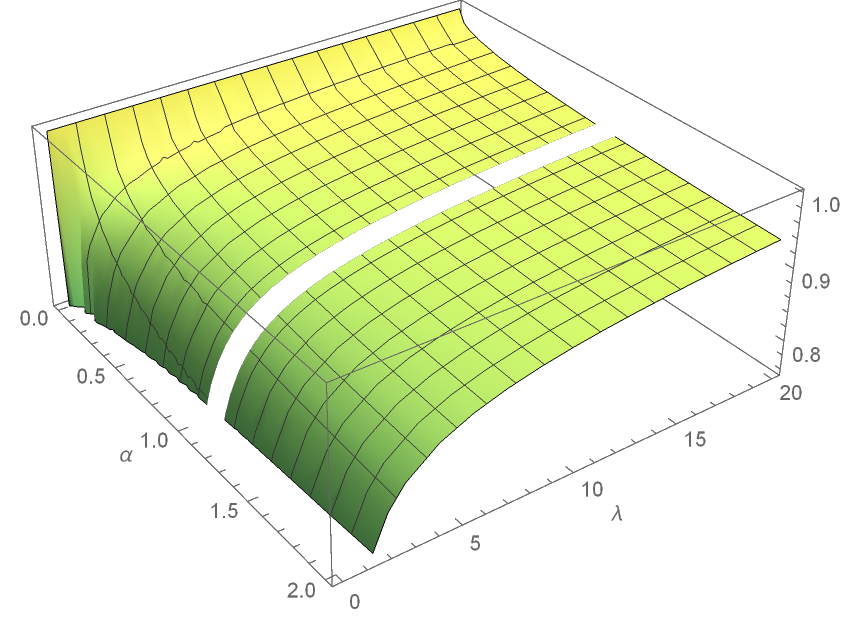}
        \caption{$\entropy_{SM}(\alpha, 2, \lambda)$ as a function of $\alpha$ and $\lambda$}
    \end{subfigure}
    \hfill
    \begin{subfigure}{0.48\textwidth}
        \includegraphics[width=\textwidth]{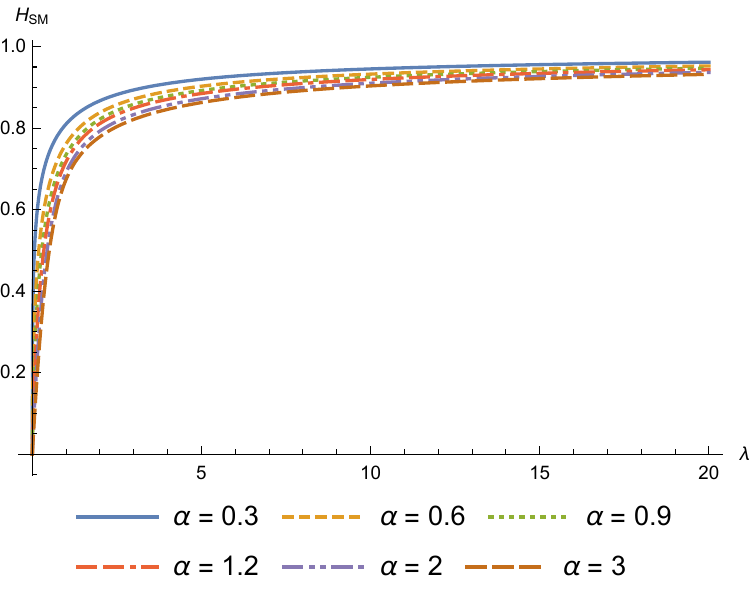}
        \caption{$\entropy_{SM}(\alpha, 2, \lambda)$ as a function $\lambda$  for various $\alpha$}
    \end{subfigure}
    \caption{The Sharma--Mittal entropy for $\beta = 2$}
    \label{fig:sm-beta=2}
\end{figure}

\begin{figure}
    \centering
    \begin{subfigure}{0.48\textwidth}
        \includegraphics[width=\textwidth]{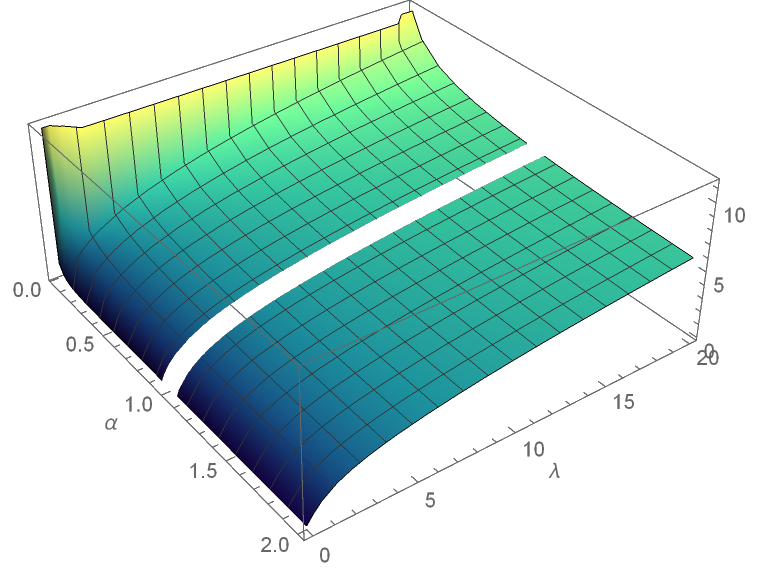}
        \caption{$\entropy_{SM}(\alpha, 0.5, \lambda)$ as a function of $\alpha$ and $\lambda$}
    \end{subfigure}
    \hfill
    \begin{subfigure}{0.48\textwidth}
        \includegraphics[width=\textwidth]{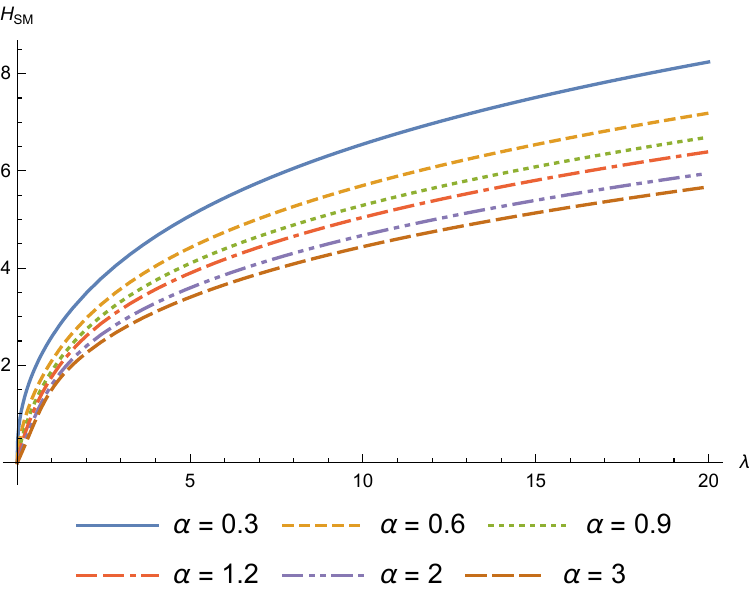}
        \caption{$\entropy_{SM}(\alpha, 0.5, \lambda)$ as a function $\lambda$  for various $\alpha$}
    \end{subfigure}
    \caption{The Sharma--Mittal entropy for $\beta = 0.5$}
    \label{fig:sm-beta=0.5}
\end{figure}

\subsection{``Anomalous`` behavior of generalized R\'enyi entropies}
\label{ssec:GR-anomalous}

We now turn to the study of two generalized R\'enyi entropies, which exhibit ``anomalous'' behavior for certain values of the parameters.

\subsubsection{The generalized R\'enyi entropy $\entropy_{GR}(\alpha,\lambda)$}
Let us first consider the generalized R\'enyi entropy with one parameter $\alpha > 0$.  By \eqref{eq:SHhtroughGR} and Proposition~\ref{prop:normal} (i), $\frac{\partial}{\partial \lambda}\entropy_{GR}(1,\lambda) =\frac{\partial}{\partial \lambda}\entropy_{SH}(\lambda)>0$ for all $\lambda>0$. The next statement shows that the positivity of $\frac{\partial}{\partial \lambda}\entropy_{GR}(\alpha,\lambda)$ may fail for $\alpha$ in a neighborhood of $0$.

\begin{proposition}\label{prop:anomalGR1zero}
There exists an interval $J\subset(0,1)$ (in a neighborhood of $0$) such that, for each $\alpha \in J$, $\entropy_{GR}(\alpha, \lambda)$ is decreasing as a function of $\lambda$ in the neighborhood of $\lambda = 1$.
\end{proposition}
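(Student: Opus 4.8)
The plan is to localize the claim to the single value $\lambda=1$ and reduce it to a sign computation, which I then analyse as $\alpha\to0^+$. Since the series defining $\entropy_{GR}(\alpha,\lambda)$ and its $\lambda$-derivative converge locally uniformly in $\lambda$, the map $\lambda\mapsto\frac{\partial}{\partial\lambda}\entropy_{GR}(\alpha,\lambda)$ is continuous; hence it suffices to show $\frac{\partial}{\partial\lambda}\entropy_{GR}(\alpha,1)<0$ for all sufficiently small $\alpha>0$, and the strict decrease on some neighbourhood of $\lambda=1$ follows for each such $\alpha$.

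First I would write the derivative in closed form. With the escort weights $\pi^{(\alpha)}_i:=p_i(\lambda)^\alpha/\psi(\alpha,\lambda)$, writing $\ex_\alpha$ and $\operatorname{Cov}_\alpha$ for the associated expectation and covariance and using $\entropy_{GR}(\alpha,\lambda)=\ex_\alpha[-\log p_i(\lambda)]$, $\partial_\lambda\log p_i(\lambda)=\frac i\lambda-1$, together with $\partial_\lambda\psi=\alpha\psi\,\ex_\alpha[\frac i\lambda-1]$, a direct differentiation gives
\[
\frac{\partial}{\partial\lambda}\entropy_{GR}(\alpha,\lambda)
=\alpha\,\operatorname{Cov}_\alpha\!\Big(\tfrac i\lambda-1,\,-\log p_i(\lambda)\Big)-\ex_\alpha\!\Big[\tfrac i\lambda-1\Big].
\]
At $\lambda=1$ the escort law becomes $\pi^{(\alpha)}_i=(i!)^{-\alpha}/Z(\alpha)$ with $Z(\alpha)=\sum_{j\ge0}(j!)^{-\alpha}$, and $-\log p_i(1)=1+\log i!$, so the identity collapses to
\[
\frac{\partial}{\partial\lambda}\entropy_{GR}(\alpha,1)
=\alpha\,\operatorname{Cov}_\alpha(i,\log i!)-\ex_\alpha[i]+1
=1-G'(\alpha),\qquad G(\alpha):=\alpha\,\ex_\alpha[i],
\]
where the last step uses $\frac{d}{d\alpha}\ex_\alpha[i]=-\operatorname{Cov}_\alpha(i,\log i!)$. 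Thus the proposition is equivalent to $G'(\alpha)>1$ for all small $\alpha>0$; note $G(0^+)=0$ and $G(1)=1$.

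The core is the behaviour of the escort law as $\alpha\to0^+$, where $\pi^{(\alpha)}\propto(i!)^{-\alpha}$ delocalizes to infinity. The natural scale is $n=n(\alpha)\to\infty$ defined by $\alpha n\log n=1$. Substituting $i=nt$ and using $\log i!=i\log i-i+\tfrac12\log(2\pi i)+o(1)$ gives $\alpha\log i!=t+\frac{t(\log t-1)}{\log n}+\dots$, so the law of $t=i/n$ converges to the standard exponential $e^{-t}\,dt$ with controlled $1/\log n$ corrections. Evaluating $\ex_\alpha[i]$ and $\operatorname{Cov}_\alpha(i,\log i!)$ against this limit — the relevant integrals being $\Gamma$-derivatives, $\int_0^\infty t^k\log t\,e^{-t}\,dt=\Gamma'(k+1)$, whence the Euler constant enters — one finds that the two $O(n)$ leading contributions to $G'(\alpha)=\ex_\alpha[i]-\alpha\,\operatorname{Cov}_\alpha(i,\log i!)$ cancel exactly, leaving $G'(\alpha)\sim n/\log n\to+\infty$. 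Consequently $\frac{\partial}{\partial\lambda}\entropy_{GR}(\alpha,1)=1-G'(\alpha)\to-\infty$, which is the required sign.

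The main obstacle is precisely this cancellation: the leading $O(n)$ terms of $\ex_\alpha[i]$ and of $\alpha\,\operatorname{Cov}_\alpha(i,\log i!)$ coincide, so no crude bound suffices and both quantities must be expanded to the next order in $1/\log n$. This demands an error-controlled asymptotic analysis — replacing the sums $\sum_i i^k(i!)^{-\alpha}$ by the rescaled exponential integrals via Euler–Maclaurin or a Laplace estimate, and verifying that neither the subleading Stirling term $\tfrac12\log(2\pi i)$ nor the sum-to-integral error disturbs the $O(n/\log n)$ coefficient that fixes the sign. Once $G'(\alpha)>1$ is secured on an interval $(0,\alpha_0)=:J$, the continuity argument of the first paragraph produces, for each $\alpha\in J$, a neighbourhood of $\lambda=1$ on which $\entropy_{GR}(\alpha,\cdot)$ is strictly decreasing.
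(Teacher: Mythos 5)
Your reduction is sound and, up to notation, is exactly the paper's: writing $\rho(\alpha):=\frac{\partial}{\partial\lambda}\log\psi(\alpha,\lambda)\bigr\rvert_{\lambda=1}=\alpha\bigl(\ex_\alpha[i]-1\bigr)=G(\alpha)-\alpha$, your identity $\frac{\partial}{\partial\lambda}\entropy_{GR}(\alpha,1)=1-G'(\alpha)$ is precisely the paper's $-\rho'(\alpha)$, and your localization in $\lambda$ via continuity of the mixed derivative is also the same. The problem is what comes next. Your entire proof of the sign rests on the claim that $G'(\alpha)\sim n/\log n\to+\infty$ (with $\alpha n\log n=1$), and this claim is asserted, not proved. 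As you yourself note, the two $O(n)$ contributions to $\ex_\alpha[i]$ and $\alpha\operatorname{Cov}_\alpha(i,\log i!)$ cancel exactly (both equal $n$ to leading order, since the limiting exponential law has mean and variance $1$), so the sign of $G'$ is decided entirely by the $O(n/\log n)$ coefficient --- and that coefficient is never computed. The cancellation makes this genuinely delicate: corrections of the same order come simultaneously from the $1/\log n$ perturbation of the limiting density, from the Stirling term $\tfrac12\log(2\pi i)$, and from sum-versus-integral errors, and any of these could a priori flip the sign. Nor can one shortcut the computation by differentiating the relation $G(\alpha)\sim-1/\log\alpha$: asymptotic equivalences do not survive differentiation. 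So what you have is a correct reduction plus a program for the hard estimate, with the hard estimate missing; as written, the proposal does not prove the proposition.

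It is worth seeing how the paper sidesteps this entirely. It needs only three soft facts: $\rho>0$ on $(0,1)$ (because $\psi(\alpha,\cdot)$ is strictly increasing in $\lambda$ for $\alpha\in(0,1)$), $\rho(\alpha)\to0$ as $\alpha\downarrow0$ (Lemma~\ref{l:lim-ratio}, a one-sided bound on the ratio $\alpha\sum_i i(i!)^{-\alpha}\big/\sum_i(i!)^{-\alpha}$ involving no cancellation and no second-order terms), and continuity of $\rho'$. Since $\rho$ is positive, $C^1$, and vanishes at $0^+$, the mean value theorem forces $\rho'>0$ on some interval $J$ inside any neighborhood of $0$, which is all the proposition claims. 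Your route, if completed, would prove more --- that $J$ may be taken of the form $(0,\alpha_0)$, together with a quantitative rate --- but the statement does not ask for that, and the price is exactly the error-controlled second-order expansion you have left open. The least-effort repair of your argument is therefore to keep your reduction, prove only that $G(\alpha)-\alpha>0$ and $G(\alpha)-\alpha\to0$ as $\alpha\downarrow0$, and let the mean value theorem produce the interval $J$.
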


\begin{proof}
By \eqref{eq:entGR-psi},
\begin{equation}\label{eq:derder}
    \frac{\partial}{\partial \lambda} \entropy_{GR}(\alpha,\lambda) 
= - \frac{\partial^2}{\partial\alpha\partial\lambda} \log \psi(\alpha, \lambda).
\end{equation}
We are going to prove that, for some $J\subset(0,1)$,
\begin{equation}\label{eq:shallprove}
    \frac{\partial}{\partial \lambda} \entropy_{GR}(\alpha,\lambda) \Bigr\rvert_{\lambda = 1}<0, \quad \alpha\in J.
\end{equation}
By the continuity of $\frac{\partial^2}{\partial\alpha\partial\lambda} \log \psi(\alpha, \lambda)$ (see Remark~\ref{rem:mixed-der}), \eqref{eq:shallprove} would imply that, for each such $\alpha\in J$, $\frac{\partial}{\partial \lambda} \entropy_{GR}(\alpha,\lambda)$ for all $\lambda$ from a neighborhood of $1$ (the neighborhood may depend on $\alpha$) that would prove the statement. 

We define
\[
\rho(\alpha) := \frac{\partial}{\partial \lambda}\log\psi(\alpha,\lambda)\Bigr\rvert_{\lambda = 1},
\]
so that, by \eqref{eq:derder},
\begin{equation}
    \frac{\partial}{\partial \lambda} \entropy_{GR}(\alpha,\lambda) \Bigr\rvert_{\lambda = 1} = - \rho'(\alpha).\label{eq:keyrel}
\end{equation}
Using formula \eqref{eq:der-psi-lambda}, we get
\[
\frac{\partial}{\partial \lambda}\log\psi(\alpha,\lambda) 
= \frac{\frac{\partial}{\partial \lambda}\psi(\alpha,\lambda) }{\psi(\alpha,\lambda)}
= \frac{\alpha \sum_{i=0}^\infty (i-\lambda) \frac{\lambda^{\alpha i - 1}}{(i!)^\alpha}}{\sum_{i = 0}^{\infty} \frac{\lambda^{i \alpha} }{(i!)^\alpha}},
\]
and therefore, 
\[
\rho(\alpha) =
\alpha \left(\frac{\sum_{i=1}^\infty \frac{i}{(i!)^\alpha}}{\sum_{i=0}^\infty \frac{1}{(i!)^\alpha}}-1\right).
\]

Note that by Proposition \ref{prop:psi-monot} $(i)$, $\log\psi(\alpha,\lambda)$ strictly increases as a function of $\lambda$ for any $\alpha\in(0,1)$. Then $ \rho(\alpha) > 0$ for any $\alpha\in(0,1)$, and for $\alpha =1$ we have
\[
\rho(1)
= \left(\frac{\sum_{i=1}^\infty \frac{1}{(i-1)!}}{\sum_{i=0}^\infty \frac{1}{i!}}-1\right)
= 0.
\]
Moreover, by to Lemma \ref{l:lim-ratio}, $\rho(\alpha)\to0$ as $\alpha\to0$. Note also that both $\rho(\alpha)$ and $\rho'(\alpha)$ are continuous functions (by Lemma~\ref{l:der-psi} and Remark \ref{rem:mixed-der}). Therefore, $\rho$ is a strictly positive continuously differentiable function on $(0,1)$ which converges to $0$ as $\alpha\to0$. As a result, $\rho$ must be increasing at a point, and hence, on an interval $J$, in a neighborhood of $0$. Then $\rho'(\alpha)>0$ for $\alpha\in J$, that, by \eqref{eq:keyrel}, proves \eqref{eq:shallprove}. The statement is proven.
\end{proof}

\begin{remark}
The graphs of $\rho(\alpha) \coloneqq \frac{\partial}{\partial \lambda} \log \psi(\alpha, \lambda)\rvert_{\lambda = 1}$ and $\rho'(\alpha) = \frac{\partial^2}{\partial \alpha \partial \lambda} \log \psi(\alpha, \lambda)\rvert_{\lambda = 1}$ for $\alpha \in (0,1)$ are shown in Figures \ref{fig:rho} (a) and (b), respectively. Figures \ref{fig:rho} (c) and (d) present the graphs of $\rho(\alpha)$ and $\rho'(\alpha)$ on an expanded scale near zero. It can be observed that there is indeed an interval where $\rho(\alpha)$ increases, as it was proven in Proposition~\ref{prop:anomalGR1zero}. Additionally, the graphs suggest that this interval is of the form $(0, \alpha_0)$, with $\alpha_0 \approx 0.15$. Beyond this point, $\rho(\alpha)$ decreases for $\alpha \in (\alpha_0, 1]$, eventually reaching $\rho(1) = 0$.
\end{remark}

\begin{figure}[htb]
    \centering
    \begin{subfigure}{0.43\textwidth}
        \includegraphics[width=\textwidth]{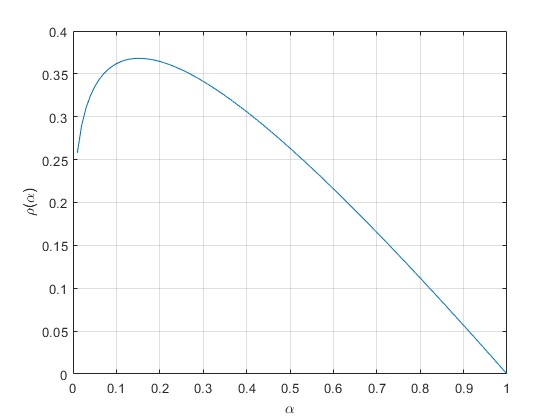}
        \caption{$\rho(\alpha)$ for $\alpha\in(0,1)$}
    \end{subfigure}
    \hfill
    \begin{subfigure}{0.43\textwidth}
        \includegraphics[width=\textwidth]{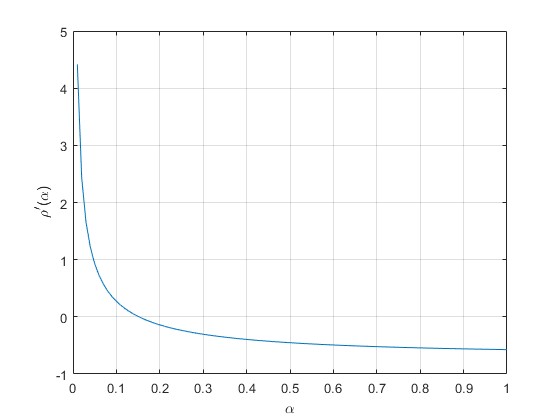}
        \caption{$\rho'(\alpha)$ for $\alpha\in(0,1)$}
    \end{subfigure}
    \hfill
    \begin{subfigure}{0.43\textwidth}
        \includegraphics[width=\textwidth]{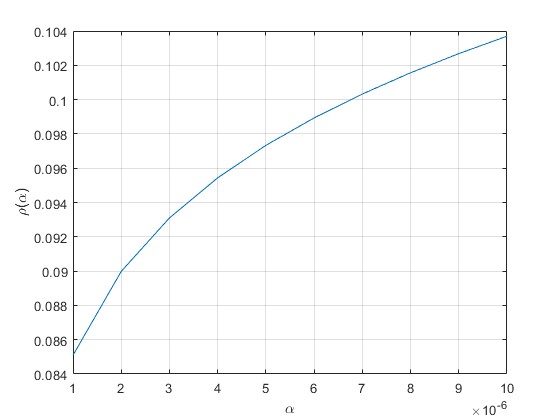}
        \caption{$\rho(\alpha)$ for $\alpha\in(10^{-6},10^{-5})$}
    \end{subfigure}
    \hfill
    \begin{subfigure}{0.43\textwidth}
        \includegraphics[width=\textwidth]{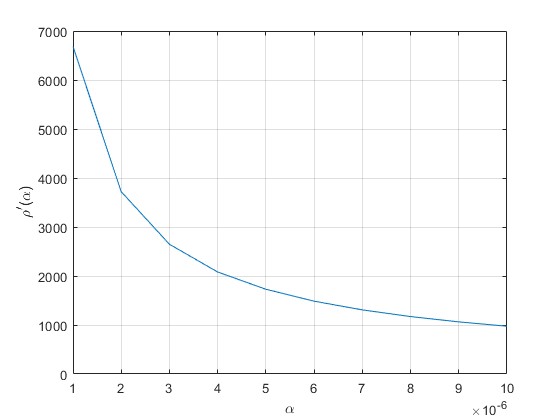}
        \caption{$\rho'(\alpha)$ for $\alpha\in(10^{-6},10^{-5})$}
    \end{subfigure}
    \caption{Graphs of the function $\rho(\alpha)$ and its derivative $\rho'(\alpha)$.}
    \label{fig:rho}
\end{figure}

\begin{remark}
   Figure \ref{subfig:GR1-full} displays the surface of the generalized R\'enyi entropy $\entropy_{GR}(\alpha, \lambda)$ within the region $0.05 \le \alpha \le 0.15$ and $0 < \lambda \le 20$. Figures \ref{subfig:GR1-a01} and \ref{subfig:GR1-a014} present the graphs of $\entropy_{GR}(\alpha, \lambda)$ as a function of $\lambda$ for $\alpha = 0.1$ and $\alpha = 0.14$, respectively. For these small values of $\alpha$, the entropy exhibits what we term as ``anomalous'' behavior, characterized by a decrease in $\lambda$ over an interval. After reaching a local minimum at the end of this interval, the entropy  $\entropy_{GR}(\alpha, \lambda)$ subsequently increases.
\end{remark}
\begin{figure}[htb]
    \centering
    \begin{subfigure}{0.48\textwidth}
        \includegraphics[width=\textwidth]{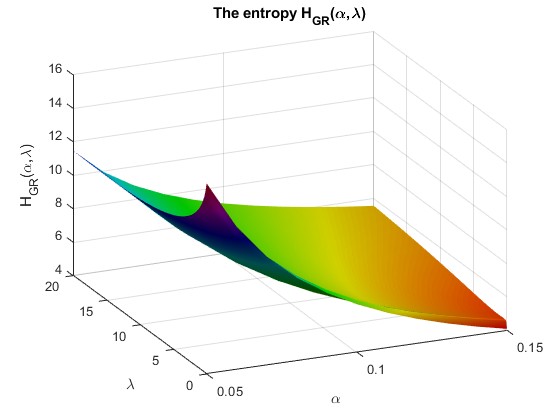}
        \caption{}
        \label{subfig:GR1-full}
    \end{subfigure}
    \hfill
    \begin{subfigure}{0.48\textwidth}
        \includegraphics[width=\textwidth]{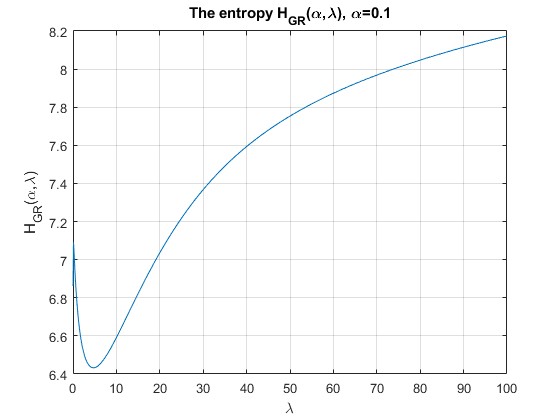}
        \caption{}
        \label{subfig:GR1-a01}
    \end{subfigure}
    \hfill
    \begin{subfigure}{0.48\textwidth}
        \includegraphics[width=\textwidth]{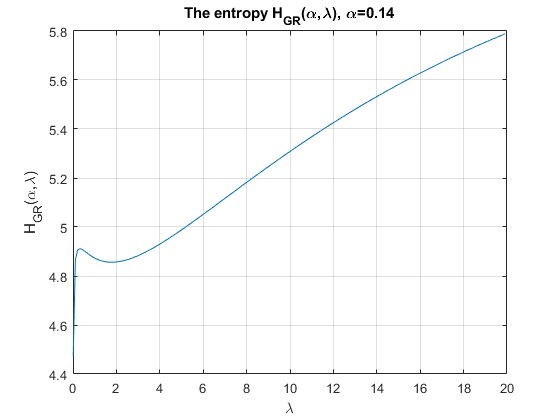}
        \caption{}
        \label{subfig:GR1-a014}
    \end{subfigure}
    \caption{The generalized R\'enyi entropy $\entropy_{GR}(\alpha,\lambda)$: (a)~as a function of $(\alpha,\lambda)$, (b)~as a function of $\lambda$ for $\alpha = 0.1$, (b)~as a function of $\lambda$ for $\alpha = 0.14$}
    \label{fig:GRplots}
\end{figure}

\begin{remark} \label{rem:anomGR1largealpha}
    We can also observe numerically that the generalized R\'enyi entropy $\entropy_{GR}(\alpha, \lambda)$ demonstrates ``anomalous'' behavior for large values of $\alpha$ as well, see Figure~\ref{fig:HGR1-largealphas-all}. 
    
    Namely, on Figure~\ref{subfig:HGR1-largealphas}, we can see that (for $\alpha=25,50,100$) the steepness of $\entropy_{GR}(\alpha, \lambda)$ changes close to the integer values of $\lambda$. Zooming the graphs, one can see, nevertheless, that $\entropy_{GR}(25, \lambda)$ is indeed non-monotone in $\lambda\in[1,1.15]$, however, it is increasing in $\lambda\in[2,2.5]$, see Figure~\ref{subfig:HGR1-alpha=25-zoom}. Next,
    $\entropy_{GR}(50, \lambda)$ is non-monotone in $\lambda\in[1,1.15]$ and $\lambda\in[2,2.15]$, however, it is increasing in $\lambda\in[3.05,3.25]$, see Figure~\ref{subfig:HGR1-alpha=50-zoom}. Finally, $\entropy_{GR}(100, \lambda)$ is non-monotone in $\lambda\in[1,1.15]$, $\lambda\in[2,2.15]$, and $\lambda\in[3,3.15]$, see Figure~\ref{subfig:HGR1-alpha=100-zoom}.
    
Such irregular behavior of $\entropy_{GR}(\alpha, \lambda)$ is evidently determined by the behavior of $\frac{\partial}{\partial\lambda}\entropy_{GR}(\alpha, \lambda)$. Figure~\ref{fig:derHGR1} shows that the derivative has damping oscillations in $\lambda$ starting from $1$ for $\lambda=0$. However, the amplitude of the oscillations, for e.g. $\alpha=15$, is not large enough to reach $0$, i.e. 
$\frac{\partial}{\partial\lambda}\entropy_{GR}(15, \lambda)>0$ for all (observed) $\lambda$. When $\alpha$ grows, the amplitude increases, and hence the number of roots to $\frac{\partial}{\partial\lambda}\entropy_{GR}(\alpha, \lambda)=0$ increases.
    
\end{remark}
    \begin{figure}[!ht]
    \centering
    
    \begin{subfigure}{\textwidth}
    \centering
        \includegraphics[width=0.3\textwidth]{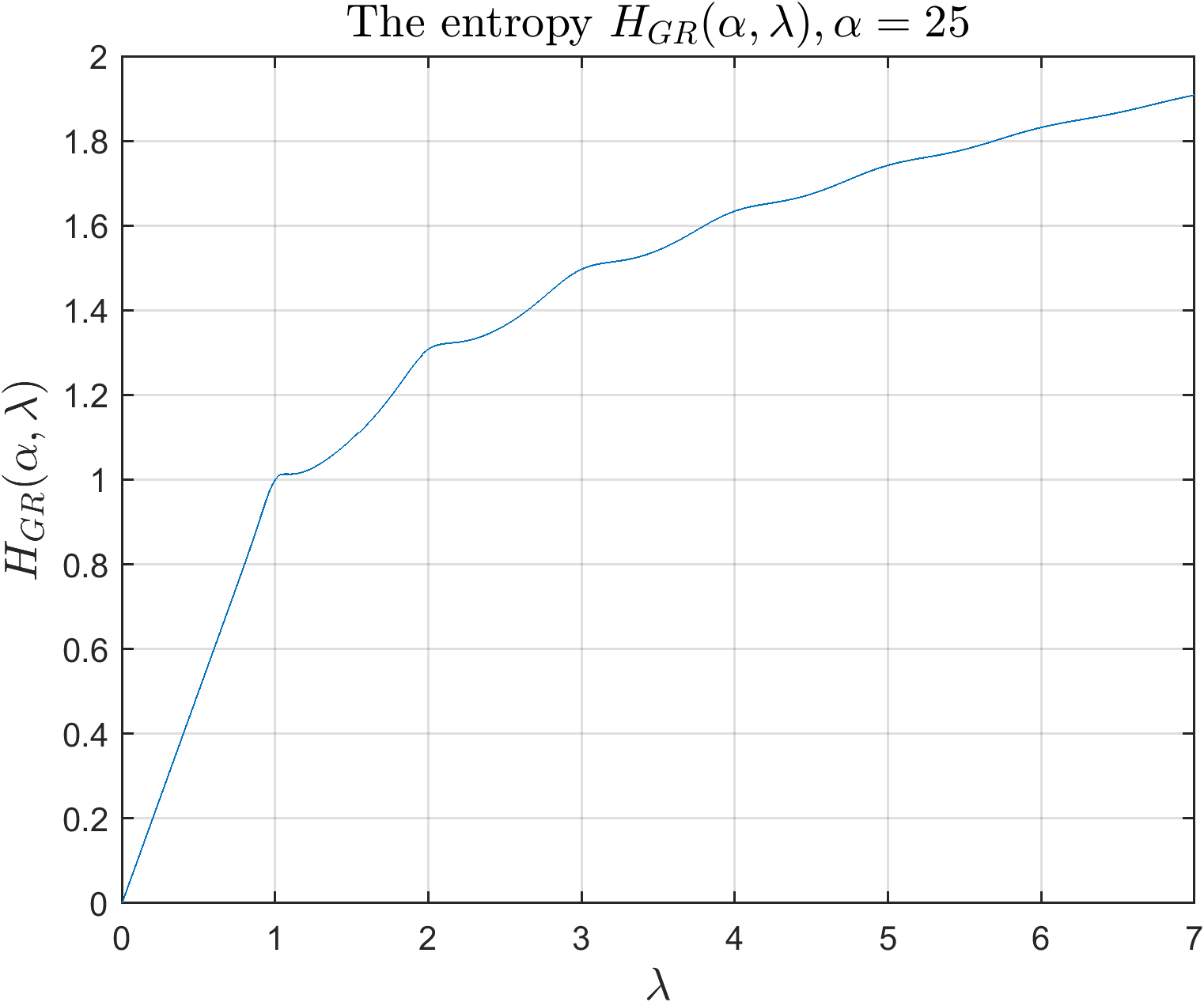}
        \includegraphics[width=0.3\textwidth]{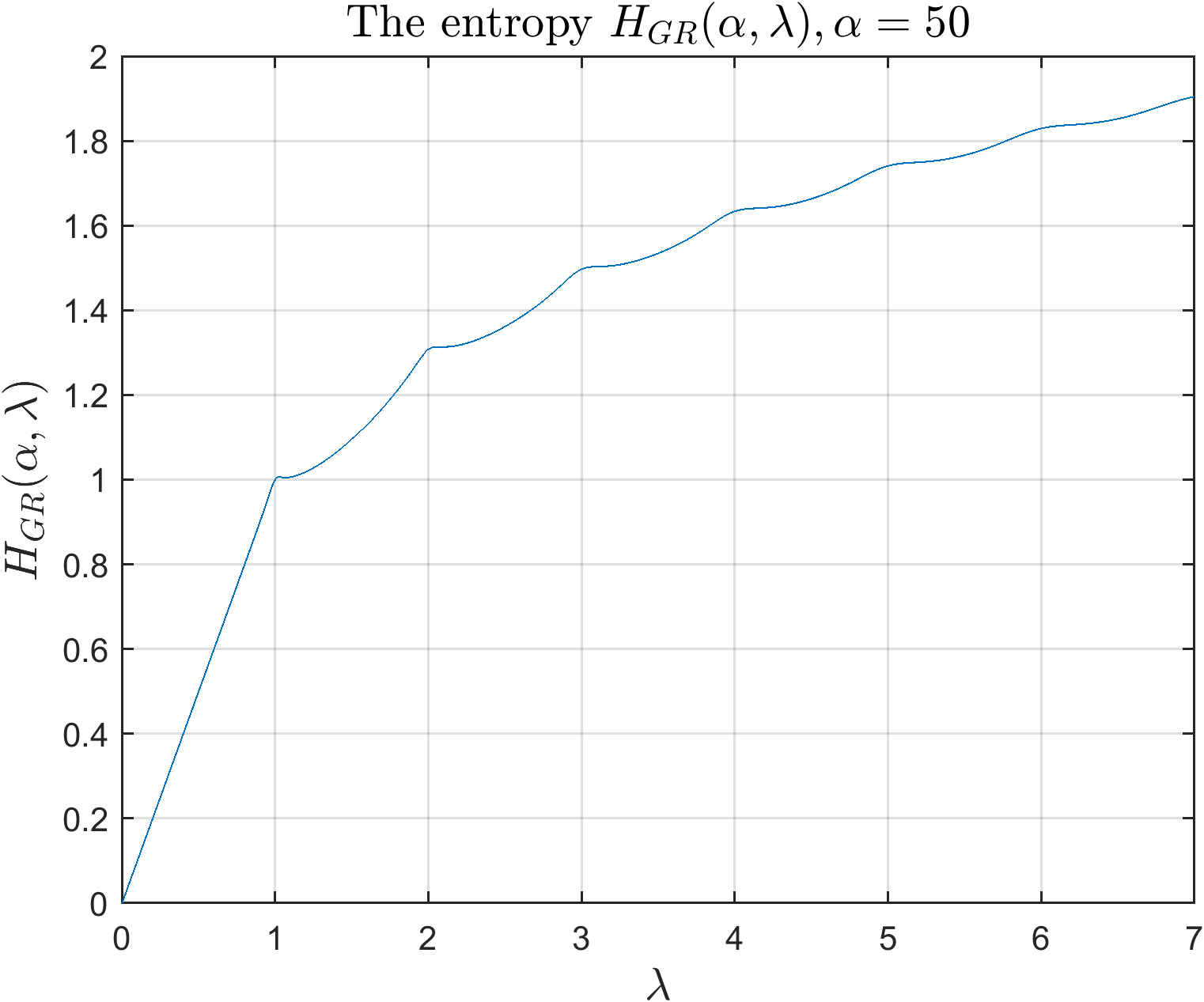}
        \includegraphics[width=0.3\textwidth]{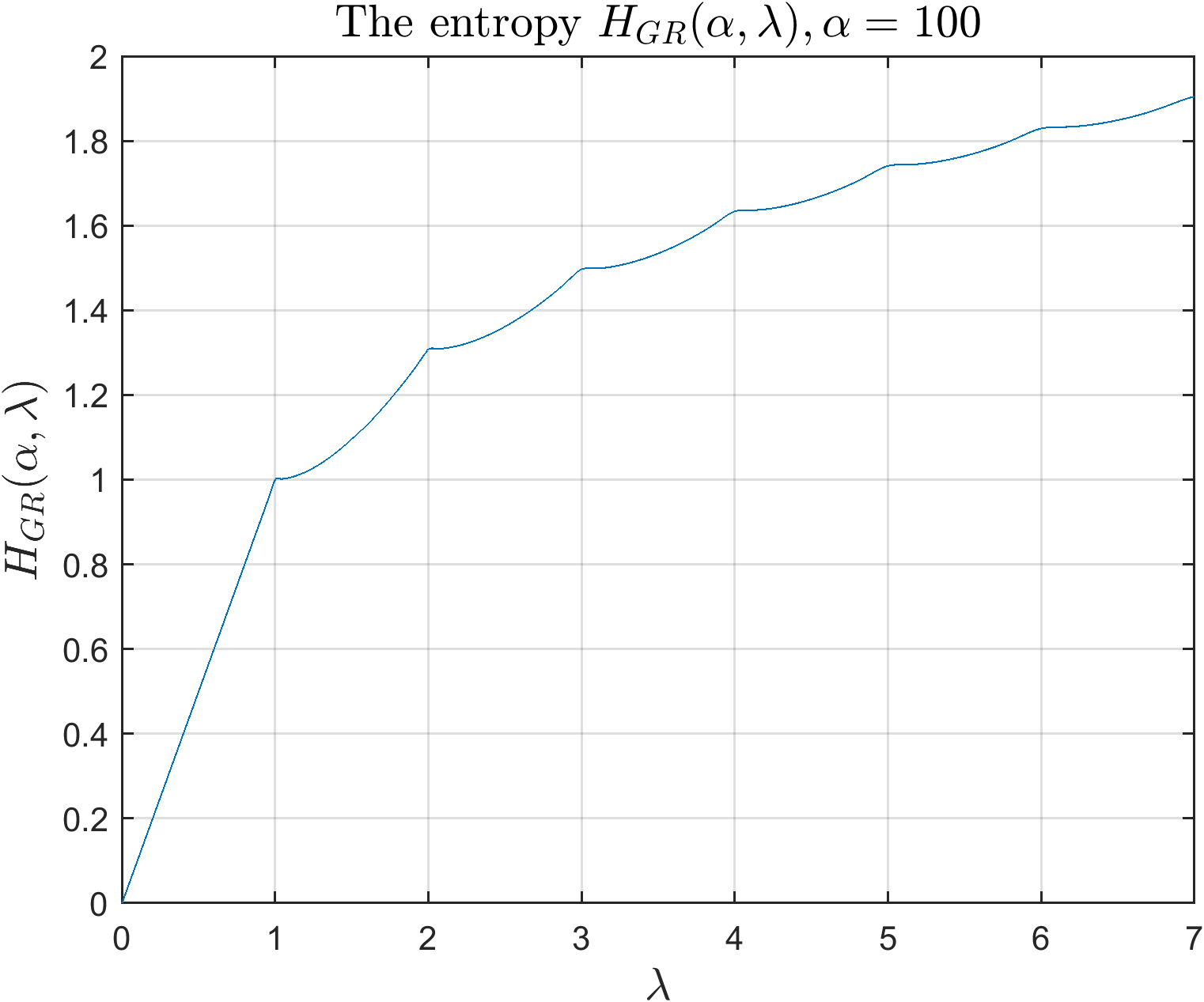}
    \caption{Non-monotone behavior of $\entropy_{GR}(\alpha,\lambda)$ for large values of $\alpha$}
    \label{subfig:HGR1-largealphas}
    \end{subfigure}

\bigskip

    \begin{subfigure}{\textwidth}
    \centering
        \includegraphics[width=0.3\textwidth]{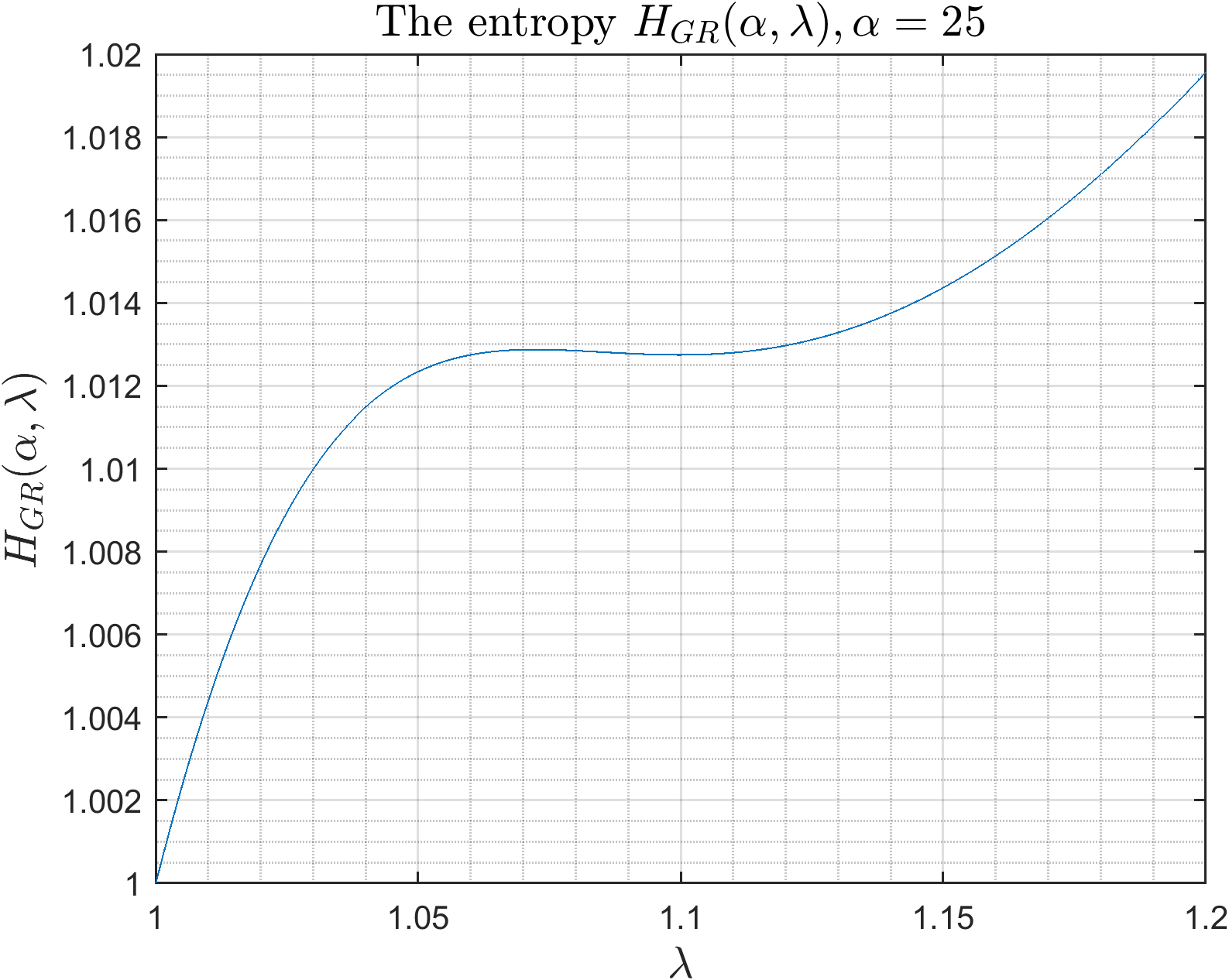}
        \includegraphics[width=0.3\textwidth]{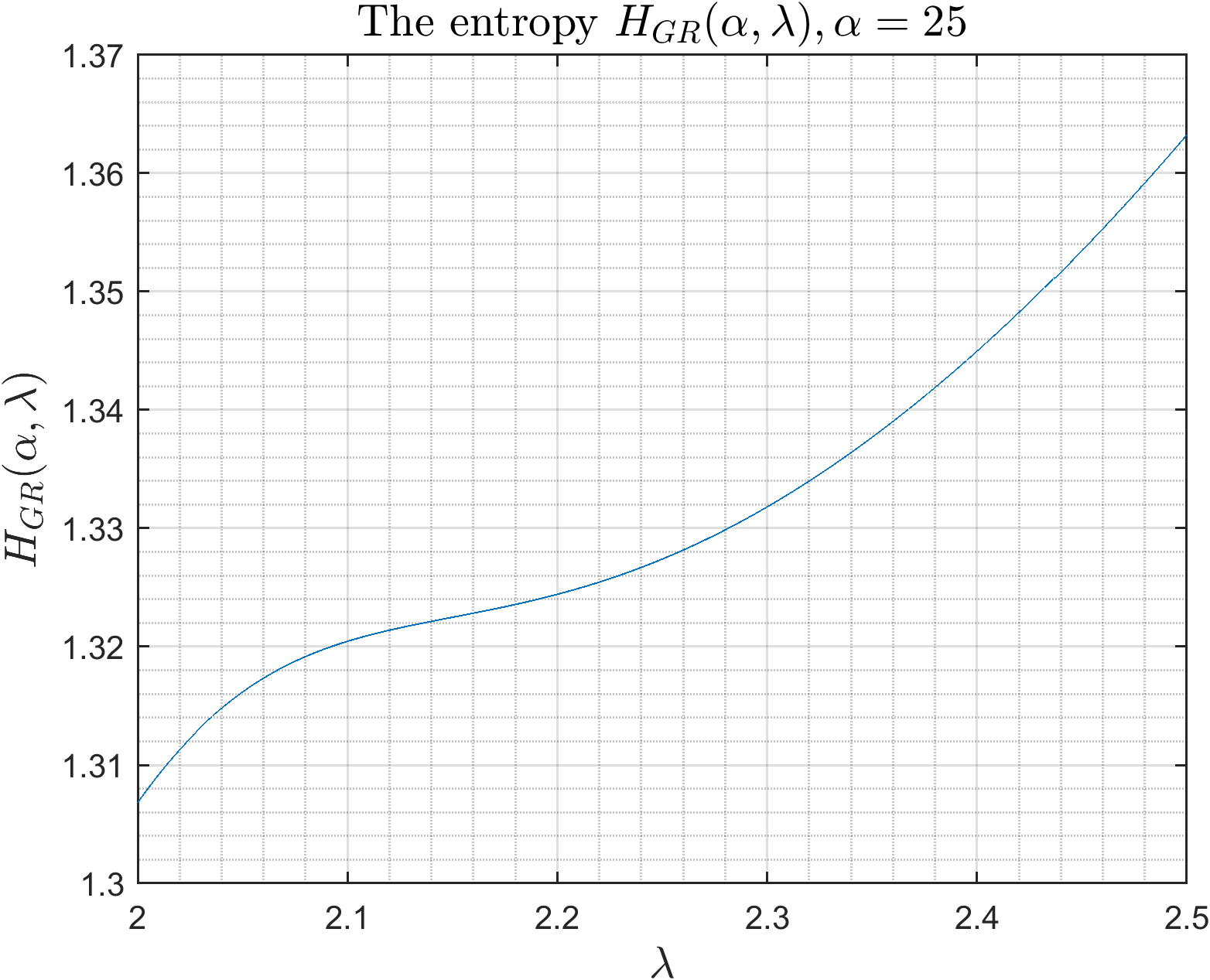}
    \caption{For $\alpha=25$, the monotonicity fails near $\lambda=1$, but it is preserved near $\lambda=2$}
    \label{subfig:HGR1-alpha=25-zoom}
    \end{subfigure}

\bigskip

    \begin{subfigure}{\textwidth}
    \centering
        \includegraphics[width=0.3\textwidth]{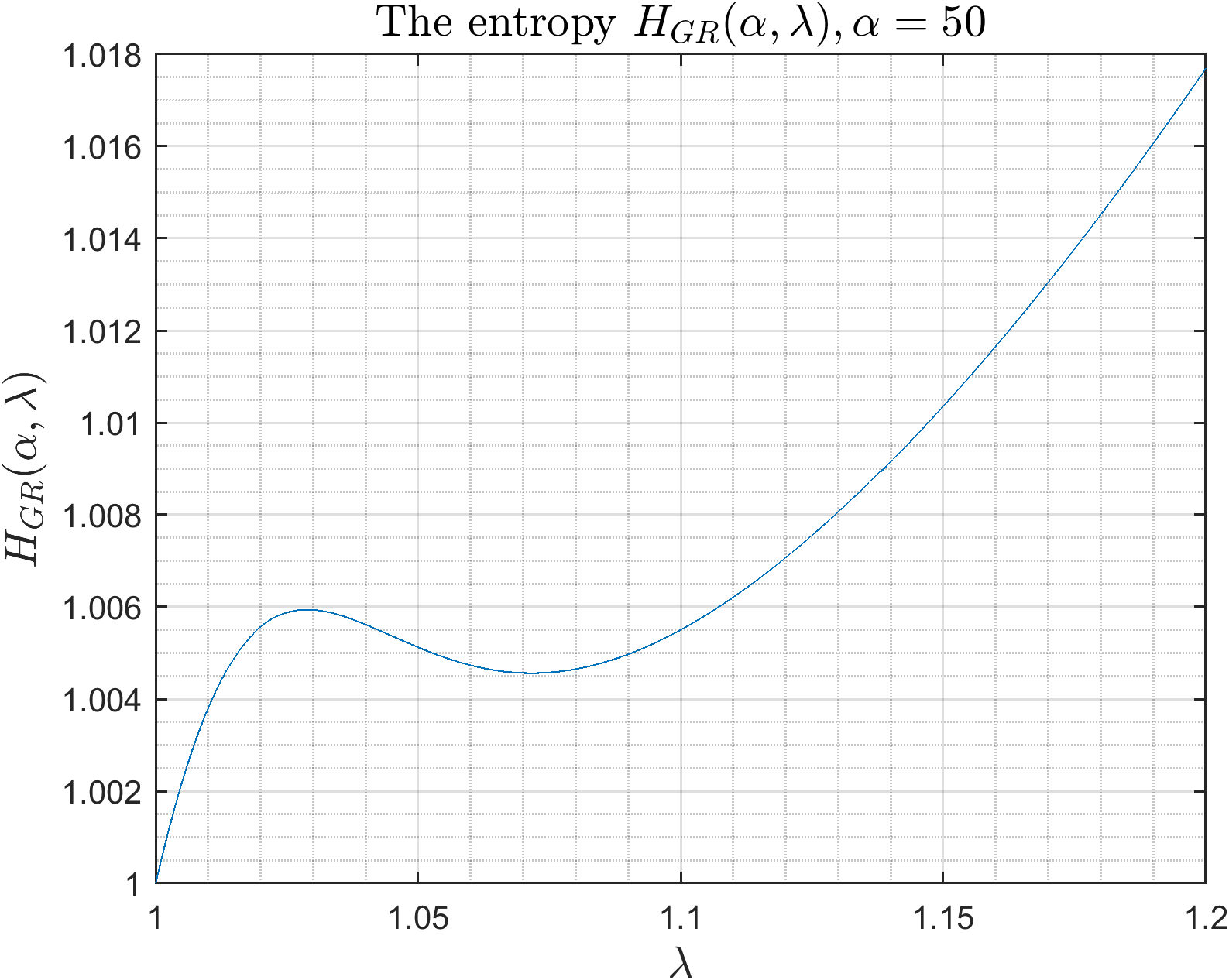}
        \includegraphics[width=0.3\textwidth]{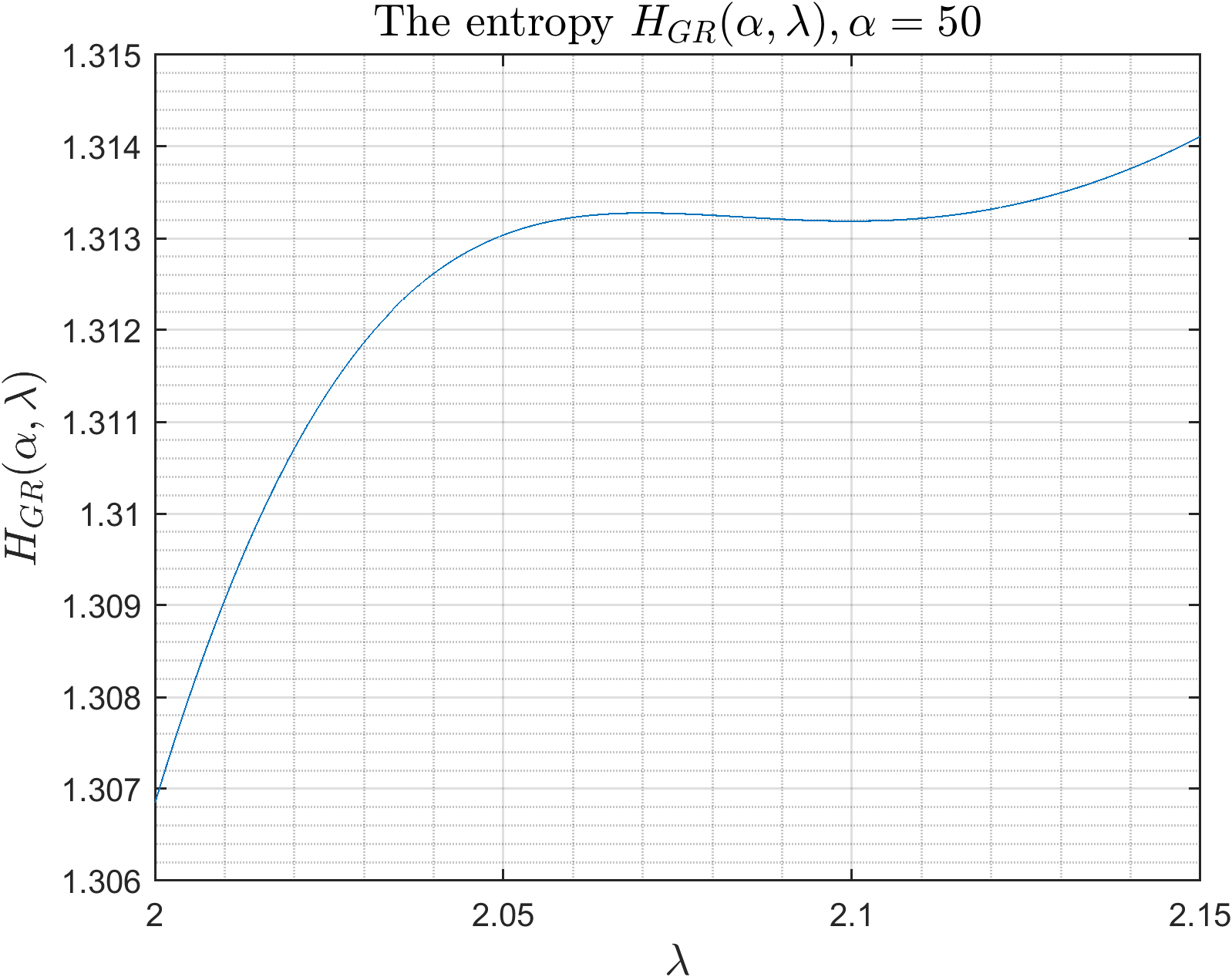}
        \includegraphics[width=0.3\textwidth]{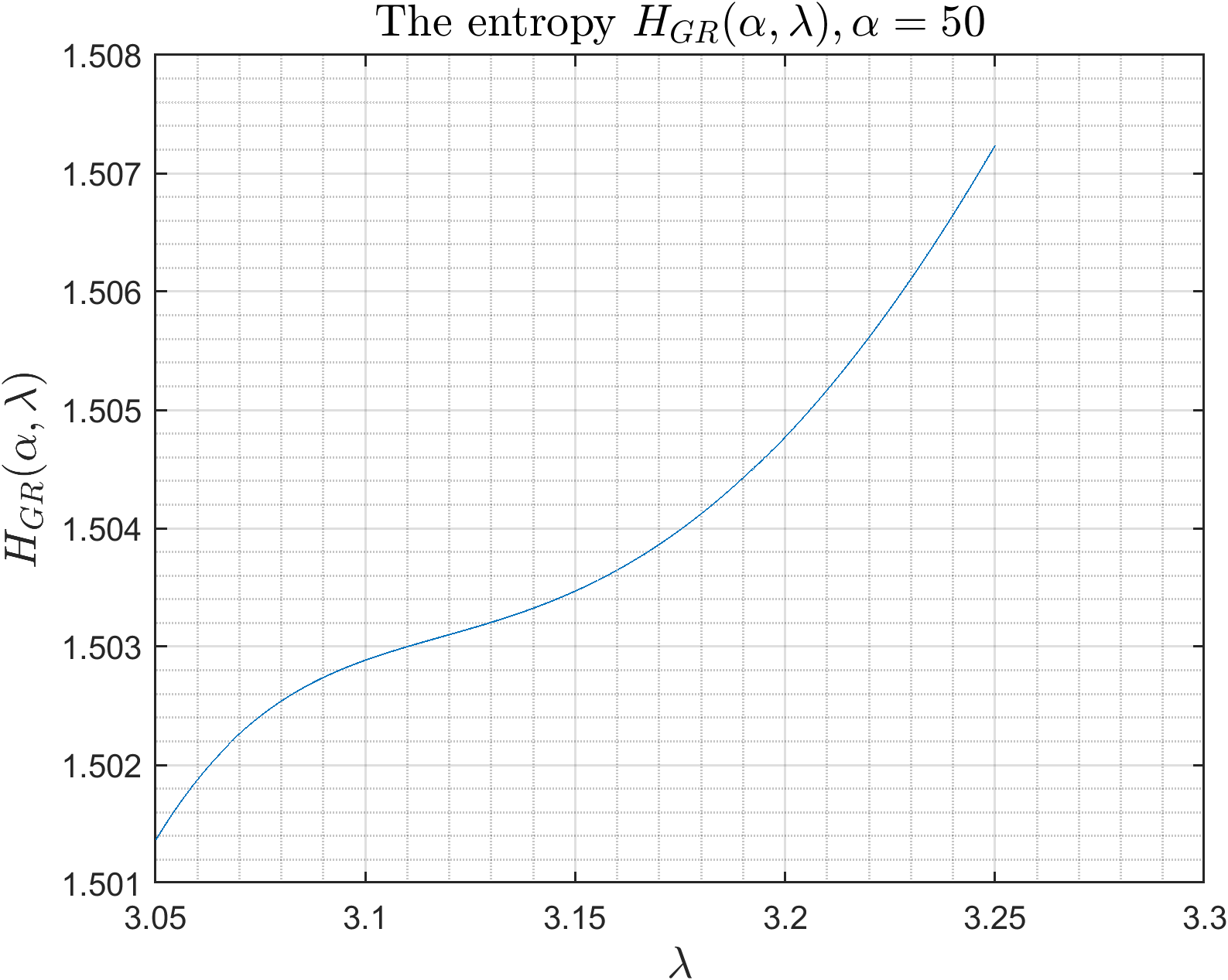}
    \caption{For $\alpha=50$, the monotonicity fails near $\lambda=1$ and $\alpha=2$, but it is preserved near $\lambda=3$}
    \label{subfig:HGR1-alpha=50-zoom}
    \end{subfigure}
   
\bigskip
 
    \begin{subfigure}{\textwidth}
    \centering
        \includegraphics[width=0.3\textwidth]{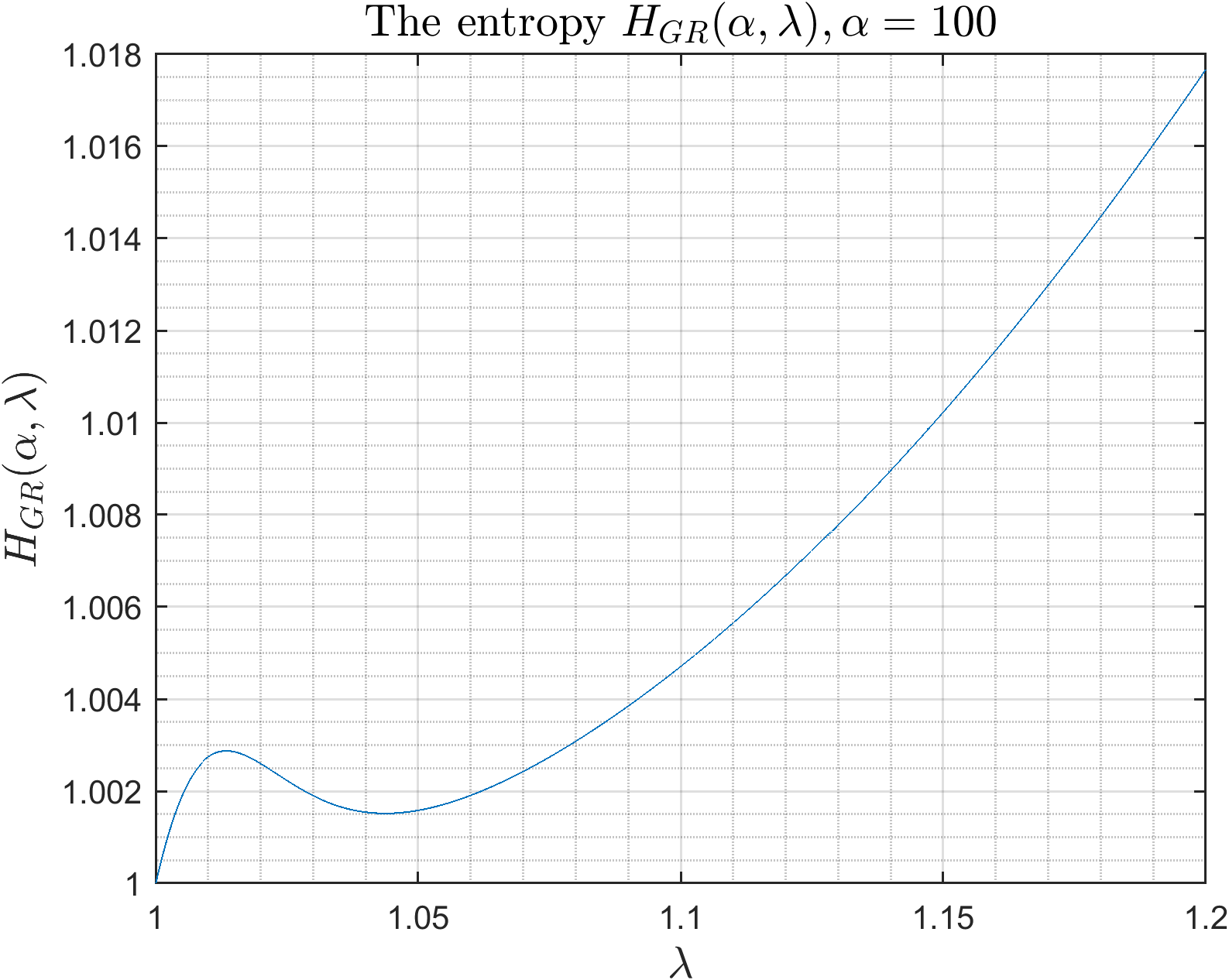}
        \includegraphics[width=0.3\textwidth]{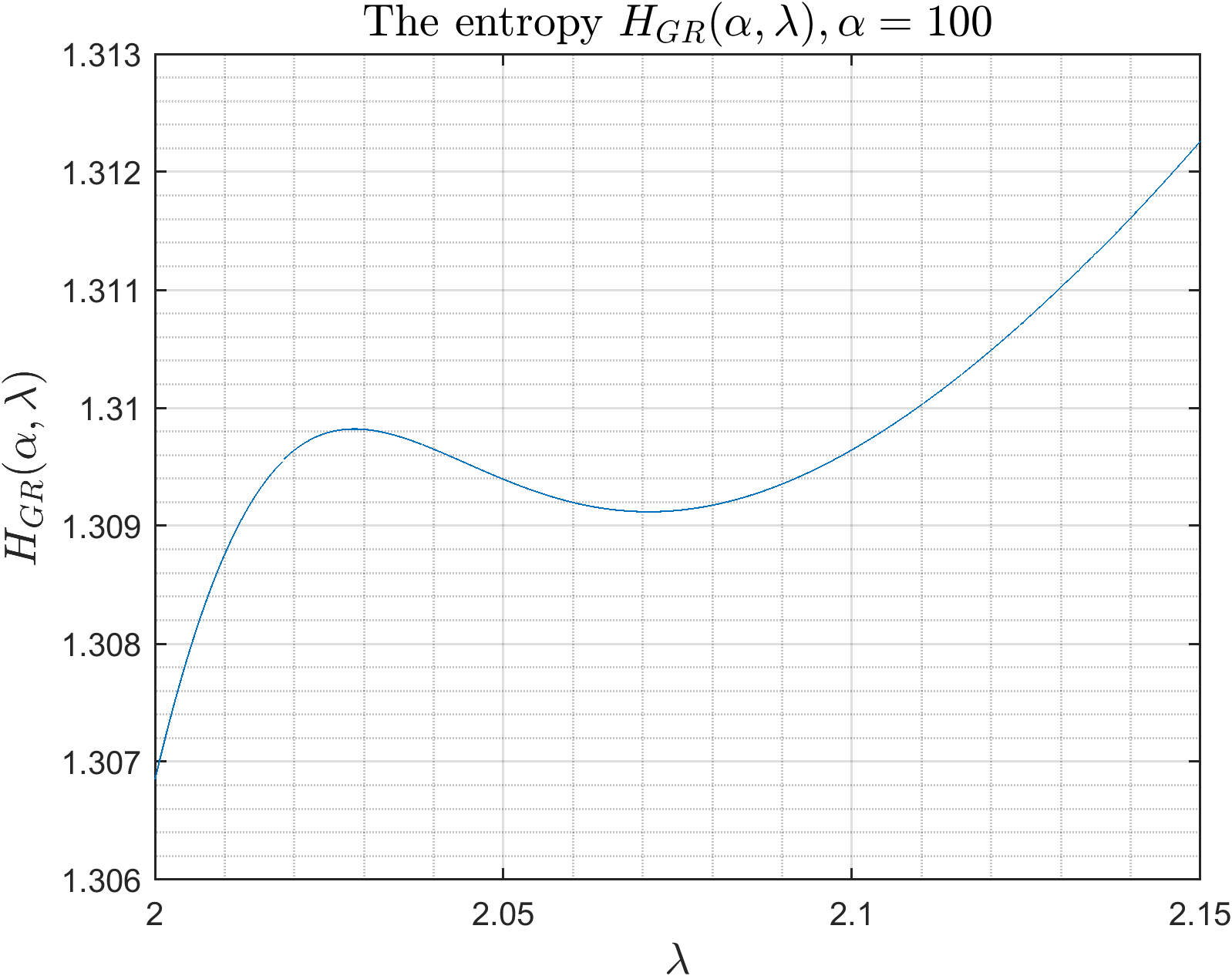}
        \includegraphics[width=0.3\textwidth]{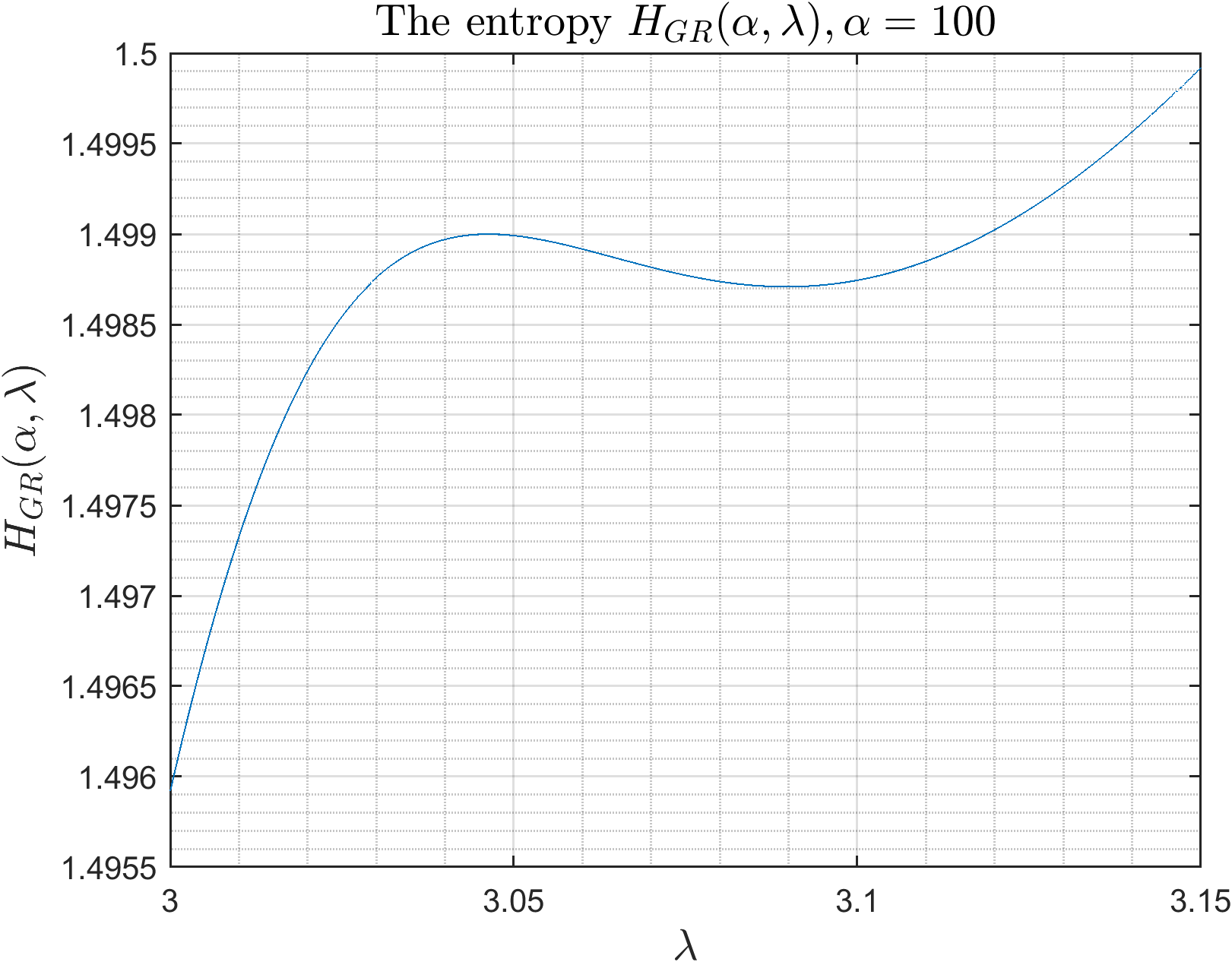}
    \caption{For $\alpha=100$, the monotonicity fails near $\lambda=1$, $\alpha=2$, and $\lambda=3$}
    \label{subfig:HGR1-alpha=100-zoom}
    \end{subfigure}
    \caption{The non-monotone behavior of $\entropy_{GR}(\alpha,\lambda)$ in $\lambda$ for large $\alpha$}
    \label{fig:HGR1-largealphas-all}
    \end{figure}   

\begin{figure}[!ht]
\centering
\includegraphics[width=0.8\textwidth]{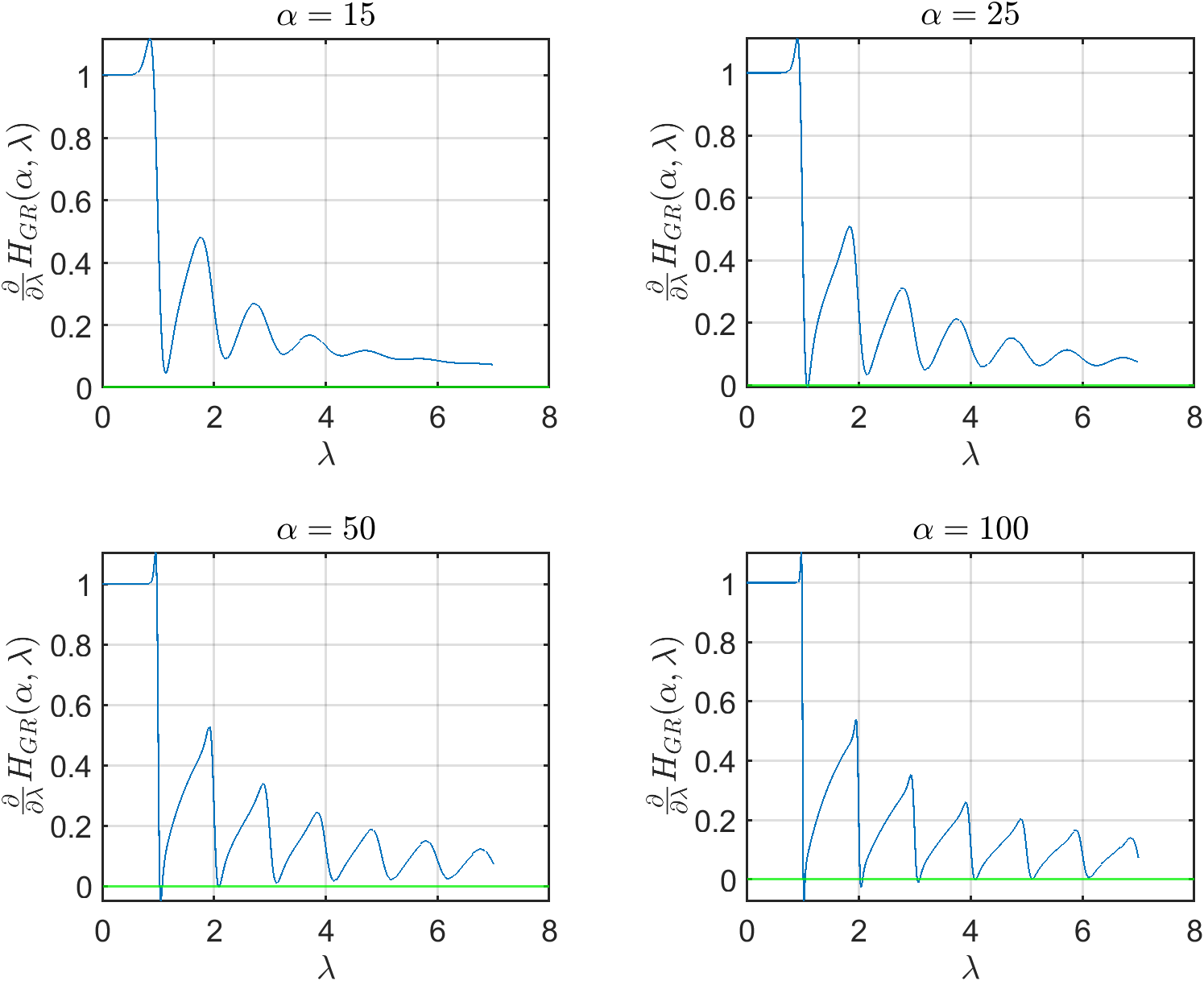}
\caption{Damping oscillations of $\frac{\partial}{\partial\lambda}\entropy_{GR}(\alpha, \lambda)$ in $\lambda$ for different values of $\alpha$}
\label{fig:derHGR1}
\end{figure}

\subsubsection{The generalized R\'enyi entropy $\entropy_{GR}(\alpha,\beta,\lambda)$}
Now we consider the generalized R\'enyi entropy with two parameters $\alpha$ and $\beta$.

\begin{proposition} \label{th:GR}
\begin{enumerate}
    \item Let $\alpha \le 1 < \beta$ or $\alpha > 1 \ge \beta$. Then $\entropy_{GR}(\alpha, \beta, \lambda)$ increases as a function of~$\lambda\in(0,\infty)$.
    \item Let $J$ be a subset of $(0,1)$ (as in Proposition~\ref{prop:anomalGR1zero}) or a subset of $(1,\infty)$ (see Remark~\ref{rem:anomGR1largealpha}) such that, for each $\alpha\in J$, the entropy $\entropy_{GR}(\alpha,\lambda)$ is decreasing in $\lambda$ on some interval (dependent on $\alpha$). Then, for each $\alpha\in J$ and for each $\beta$ in a neighborhood of $\alpha$, the entropy $\entropy_{GR}(\alpha,\beta,\lambda)$ is also decreasing in $\lambda$ on some interval (dependent on $\alpha,\beta$).    
\end{enumerate}
\end{proposition}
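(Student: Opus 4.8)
The plan is to route everything through the single function $G(\alpha,\lambda):=\frac{\partial}{\partial\lambda}\log\psi(\alpha,\lambda)$ and to exploit the $\psi$-representations of the two generalized R\'enyi entropies. Writing \eqref{eq:entGGR-psi} as $\entropy_{GR}(\alpha,\beta,\lambda)=\frac{\log\psi(\alpha,\lambda)-\log\psi(\beta,\lambda)}{\beta-\alpha}$ and differentiating in $\lambda$ (legitimate by the $C^1$-smoothness of $\psi$ from Lemma~\ref{l:der-psi}) gives
\[
\frac{\partial}{\partial\lambda}\entropy_{GR}(\alpha,\beta,\lambda)=\frac{G(\alpha,\lambda)-G(\beta,\lambda)}{\beta-\alpha}.
\]
Since $G=\frac{\partial_\lambda\psi}{\psi}$ with $\psi>0$, Proposition~\ref{prop:psi-monot} supplies the sign of $G$: we have $G(\alpha,\lambda)>0$ for $\alpha\in(0,1)$, $G(1,\lambda)=0$ (because $\psi(1,\lambda)\equiv1$), and $G(\alpha,\lambda)<0$ for $\alpha>1$.

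For part~1 I would simply chase signs in the displayed quotient. If $\alpha\le1<\beta$, then $G(\alpha,\lambda)\ge0>G(\beta,\lambda)$ and $\beta-\alpha>0$, so the quotient is strictly positive; if $\alpha>1\ge\beta$, then $G(\alpha,\lambda)<0\le G(\beta,\lambda)$ while $\beta-\alpha<0$, so numerator and denominator are both negative and the quotient is again strictly positive. Hence $\frac{\partial}{\partial\lambda}\entropy_{GR}(\alpha,\beta,\lambda)>0$ for all $\lambda>0$, which is the claimed monotonicity.

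For part~2 the key observation is that the quotient above is a difference quotient of $G$ in its first variable, whose $\beta\to\alpha$ limit recovers the one-parameter object: by \eqref{eq:entGR-psi} and \eqref{eq:derder}, $\frac{\partial}{\partial\lambda}\entropy_{GR}(\alpha,\lambda)=-\frac{\partial}{\partial\alpha}G(\alpha,\lambda)$. So I would fix $\alpha\in J$ together with a point $\lambda_0$ at which $\frac{\partial}{\partial\lambda}\entropy_{GR}(\alpha,\lambda_0)<0$ (such a $\lambda_0$ exists because $\entropy_{GR}(\alpha,\cdot)$ is decreasing on an interval and has continuous $\lambda$-derivative; e.g. $\lambda_0=1$ in the setting of Proposition~\ref{prop:anomalGR1zero}), and study
\[
\Phi(\beta):=\frac{\partial}{\partial\lambda}\entropy_{GR}(\alpha,\beta,\lambda)\Big|_{\lambda=\lambda_0}=\frac{G(\alpha,\lambda_0)-G(\beta,\lambda_0)}{\beta-\alpha},\qquad\beta\ne\alpha.
\]
Thanks to the existence and continuity of the mixed partial $\frac{\partial^2}{\partial\alpha\partial\lambda}\log\psi$ (Remark~\ref{rem:mixed-der}), $\Phi$ extends continuously to $\beta=\alpha$ with value $-\frac{\partial}{\partial\alpha}G(\alpha,\lambda_0)=\frac{\partial}{\partial\lambda}\entropy_{GR}(\alpha,\lambda_0)<0$. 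By continuity $\Phi(\beta)<0$ throughout a punctured neighborhood of $\alpha$; fixing any such $\beta$ and invoking continuity of $\lambda\mapsto\frac{\partial}{\partial\lambda}\entropy_{GR}(\alpha,\beta,\lambda)$ at $\lambda_0$, the derivative remains negative on a whole $\lambda$-interval around $\lambda_0$, so $\entropy_{GR}(\alpha,\beta,\lambda)$ decreases there.

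I expect the main obstacle to be the regularity bookkeeping behind part~2 rather than any hard estimate: turning the two-parameter entropy's $\lambda$-derivative into that of the one-parameter entropy requires the difference quotient of $G$ to converge to $\frac{\partial}{\partial\alpha}G$, and then the strict sign must be transferred first from the limit $\beta=\alpha$ to genuine nearby $\beta$ and afterwards from the single point $\lambda_0$ to an interval. Both transfers rest on the joint continuity of the second-order derivatives of $\log\psi$; once these continuity facts (Lemma~\ref{l:der-psi} and Remark~\ref{rem:mixed-der}) are granted, the remainder is elementary sign-chasing.
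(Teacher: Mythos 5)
Your proposal is correct and takes essentially the same route as the paper: both parts rest on the representation \eqref{eq:entGGR-psi}, the monotonicity of $\psi$ from Proposition~\ref{prop:psi-monot}, and the mixed-partial identity $\frac{\partial}{\partial\lambda}\entropy_{GR}(\alpha,\lambda)=-\frac{\partial^2}{\partial\alpha\,\partial\lambda}\log\psi(\alpha,\lambda)$ justified by Lemma~\ref{l:der-psi} and Remark~\ref{rem:mixed-der}; in part~2 the paper differs only cosmetically, using continuity in $\alpha$ to make $G(\cdot,\lambda_0)$ increasing on a whole neighborhood $I$ of the given $\alpha$ and then comparing arbitrary pairs in $I$, whereas you pass to the limit of the difference quotient $\Phi(\beta)$ as $\beta\to\alpha$, which proves the same thing. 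The only blemish is in part~1: the cited Proposition~\ref{prop:psi-monot} (strict monotonicity of $\psi$) directly yields only $G(\alpha,\lambda)\ge0$ for $\alpha\in(0,1)$ and $G(\beta,\lambda)\le0$ for $\beta>1$ rather than pointwise strict inequalities, but this is harmless since the non-strict signs (or the paper's direct ``increasing minus decreasing'' phrasing) already give the claimed monotonicity.
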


\begin{proof}
First, we note that, by \eqref{eq:entGGR-psi}, for any $\alpha,\beta>0$, $\alpha\neq\beta$,
\[
\entropy_{GR}(\alpha, \beta, \lambda)=\entropy_{GR}( \beta, \alpha, \lambda).
\]
Hence, without loss of generality, we assume that $\beta>\alpha$.
We rewrite then \eqref{eq:entGGR-psi} as follows
\begin{equation}\label{eq:HGRRrewr}
    \entropy_{GR}(\alpha, \beta, \lambda) =\frac{1}{\beta-\alpha}\bigl(\log\psi(\alpha,\lambda)-\log \psi(\beta,\lambda)\bigr),
\end{equation}
where, recall, $\psi$ is defined through \eqref{psifunction}.

1. If $\beta > 1 >\alpha$, then $\beta-\alpha>0$, and, by Proposition~\ref{prop:psi-monot}, $\log\psi(\alpha,\lambda)$ is increasing in $\lambda$, and $\log \psi(\beta,\lambda)$ is decreasing in $\lambda$. Therefore, by \eqref{eq:HGRRrewr}, $\entropy_{GR}(\alpha,\beta,\lambda)$ is increasing in $\lambda$. 
Similarly, if $\beta>1=\alpha$, then $\log\psi(1,\lambda)\equiv0$, and the same arguments work.

2. Let $\alpha < \beta < 1$ or $1<\alpha<\beta$.
By \eqref{eq:HGRRrewr}, the condition
$\frac\partial{\partial \lambda}\entropy_{GR}(\alpha,\beta,\lambda)\rvert_{\lambda=\lambda_0}<0$ for some $\lambda_0>0$ is equivalent to 
    \begin{equation}\label{dlambdamonot}
        \frac\partial{\partial \lambda}\log\psi(\alpha,\lambda)\Bigr\rvert_{\lambda=\lambda_0} < \frac\partial{\partial \lambda}\log\psi(\beta,\lambda)\Bigr\rvert_{\lambda=\lambda_0}.
    \end{equation}
A sufficient condition to have \eqref{dlambdamonot} for all $\alpha,\beta\in I$ with $\alpha<\beta$, for some $I\subset J$, would be to have that the function 
\[
I\ni \alpha\mapsto \frac\partial{\partial \lambda}\log\psi(\alpha,\lambda)\Bigr\rvert_{\lambda=\lambda_0} 
\]
is strictly increasing on $I$, that is equivalent to
\begin{align}
    0&<\frac{\partial}{\partial\alpha} \biggl(\frac\partial{\partial \lambda}\log\psi(\alpha,\lambda)\Bigr\rvert_{\lambda=\lambda_0} \biggr)=\Bigl(\frac{\partial^2}{\partial\alpha \partial \lambda}\log\psi(\alpha,\lambda)\Bigr)\biggr\rvert_{\lambda=\lambda_0}\notag\\
    &= \frac{\partial}{\partial\lambda} \Bigl(\frac\partial{\partial \alpha}\log\psi(\alpha,\lambda) \Bigr)\biggr\rvert_{\lambda=\lambda_0}
    =-\frac{\partial}{\partial\lambda}\entropy_{GR}(\alpha,\lambda)\Bigr\rvert_{\lambda=\lambda_0},\label{eq:3214234}
\end{align}
where the equalities between partial derivatives of $\log\psi$ hold by Lemma~\ref{l:der-psi} and Remark \ref{rem:mixed-der}, and we also used \eqref{eq:entGR-psi}. 

As a result, if, for some $\alpha_0\in J$, $\entropy_{GR}(\alpha_0,\lambda)$ is decreasing in $\lambda_0$ from a neighborhood of some $\lambda_0>0$, then the inequality $\frac{\partial}{\partial\lambda}\entropy_{GR}(\alpha,\lambda)\Bigr\rvert_{\lambda=\lambda_0}<0$ holds for all $\alpha$ from a neighborhood $I$ of $\alpha_0$. Then, by \eqref{eq:3214234}, the inequality \eqref{dlambdamonot} holds for all $\alpha,\beta\in I$, and thus for them, $\frac\partial{\partial \lambda}\entropy_{GR}(\alpha,\beta,\lambda)\rvert_{\lambda=\lambda_0}<0$. Since the functions in \eqref{eq:HGRRrewr} are continuously differentiable, we will get the statement.
\end{proof}

\begin{remark}
    Figure \ref{subfig:GR2small} presents the surface plot of $\entropy_{GR} (\alpha, \beta, \lambda)$ as a function of $\alpha$ and $\lambda$, with fixed $\beta = 0.01$, within the region $0.01 \le \alpha \le 0.03$ and $0 < \lambda \le 150$. In Figure \ref{subfig:GR2smallboth}, the dependence of $\entropy_{GR} (\alpha, \beta, \lambda)$ on $\lambda$ is shown for fixed values of $\alpha = 0.02$ and $\beta = 0.01$. The results indicate an ``anomalous'' behavior of the generalized Rényi entropy for these values of $\alpha$ and $\beta$: initially, the entropy decreases with increasing $\lambda$, followed by a subsequent increase. This supports the second statement of Proposition~\ref{th:GR} in view of the similar results observed in Figure~\ref{fig:GRplots}. 

    The proof of the second statement of Proposition~\ref{th:GR} implies that the behavior of $\entropy_{GR} (\alpha, \beta, \lambda)$ for large $\alpha$ and $\beta$ should be similar to the one observed in Figure~\ref{fig:HGR1-largealphas-all}. Indeed Figure~\ref{fig:HGR2} supports this statement.
    
\end{remark}

\begin{figure}[bht]
    \centering
    \begin{subfigure}{0.48\textwidth}
        \includegraphics[width=\textwidth]{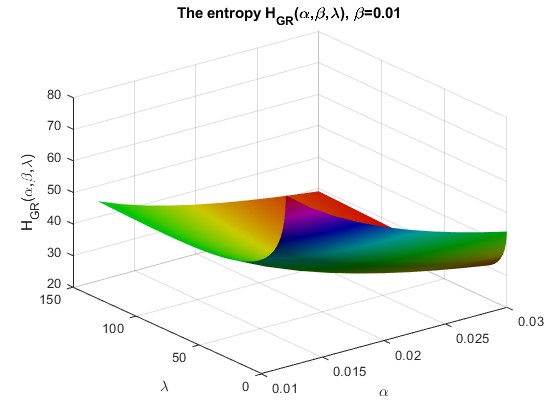}
        \caption{}
        \label{subfig:GR2small}
    \end{subfigure}
    \hfill
    \begin{subfigure}{0.48\textwidth}
        \includegraphics[width=\textwidth]{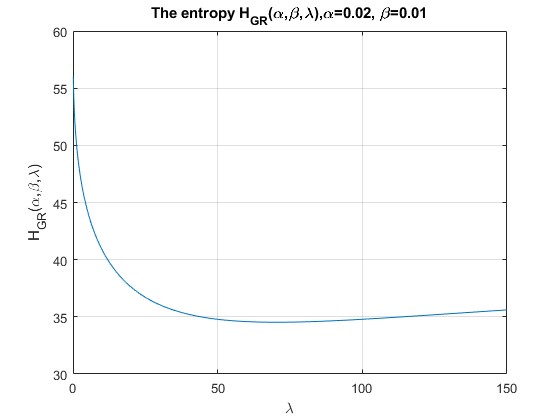}
        \caption{}
        \label{subfig:GR2smallboth}
    \end{subfigure}
    \caption{The generalized R\'enyi entropy $\entropy_{GR}(\alpha,\beta,\lambda)$: (a)~as a function of $(\alpha,\lambda)$ for $\beta = 0.01$, (b)~as a function of $\lambda$ for $\alpha = 0.02$ and $\beta = 0.01$}
    \label{fig:GR2}
\end{figure}

\begin{figure}[bht]
 \centering
    \begin{subfigure}{0.48\textwidth}
        \includegraphics[width=\textwidth]{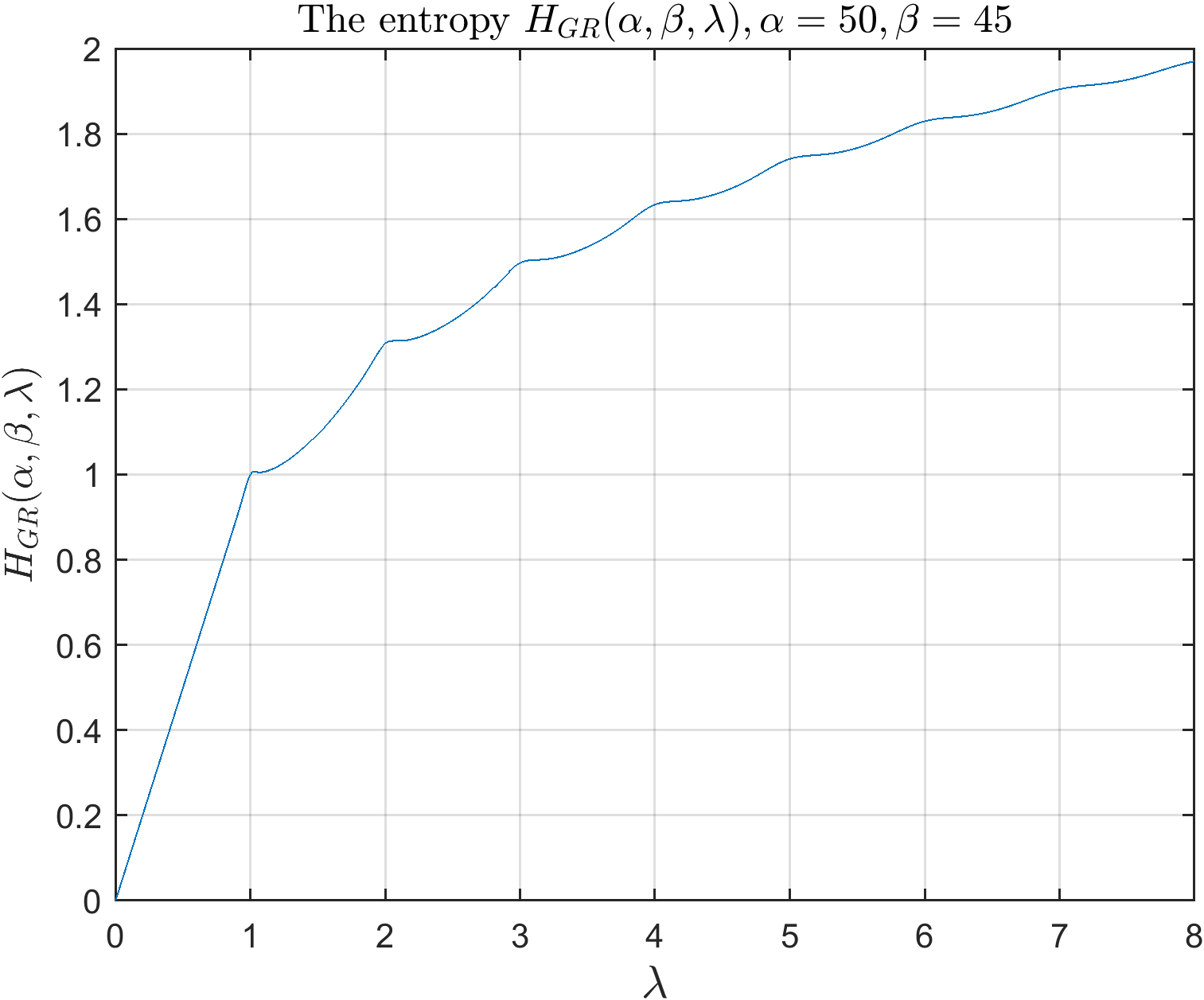}
        \caption{}
        \label{subfig:GR2large}
    \end{subfigure}
    \hfill
    \begin{subfigure}{0.48\textwidth}
        \includegraphics[width=\textwidth]{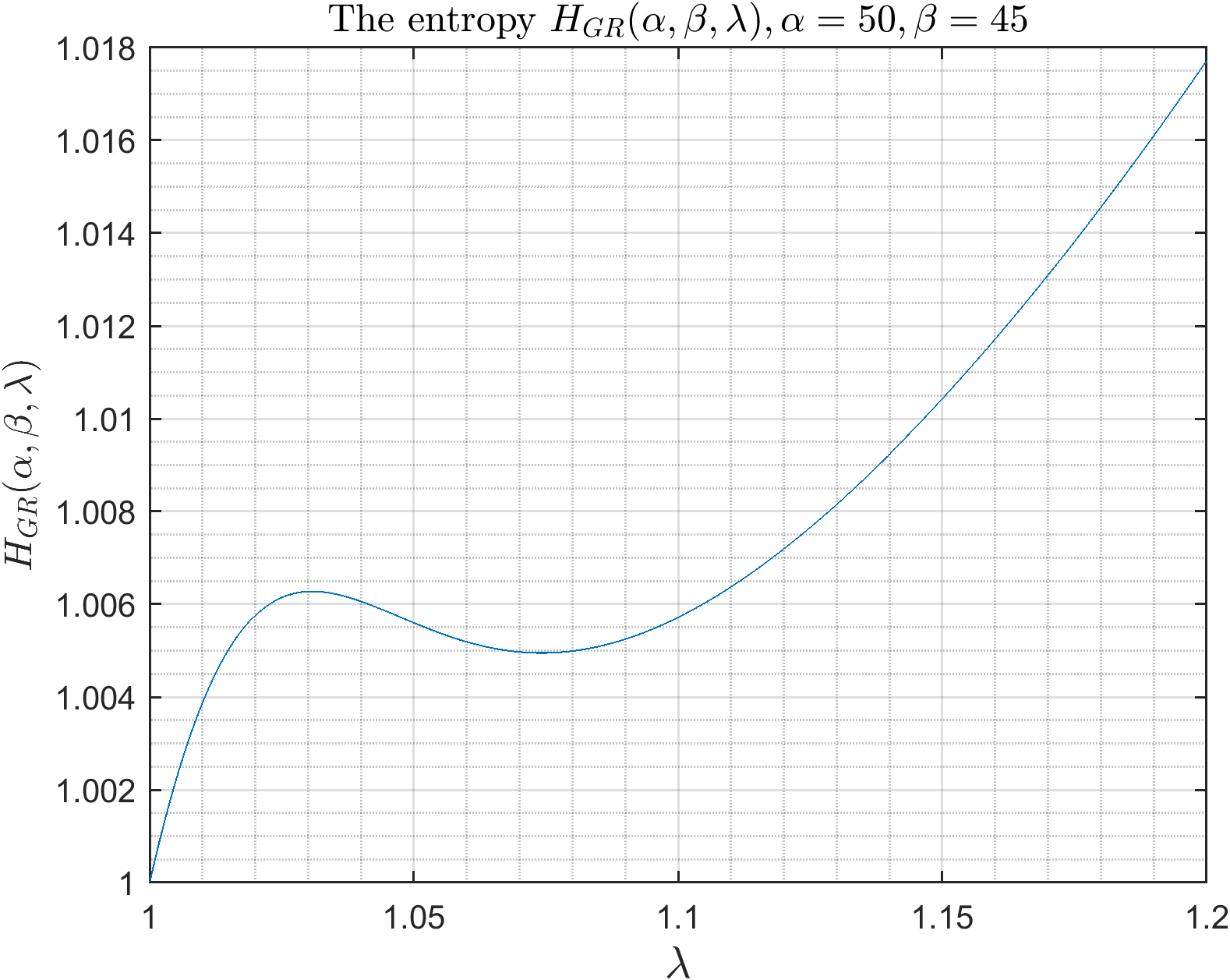}
        \caption{}
        \label{subfig:GR2largezoom}
    \end{subfigure}
    \caption{The generalized R\'enyi entropy $\entropy_{GR}(\alpha,\beta,\lambda)$ for large $\alpha$ and $\beta$}
    \label{fig:HGR2}
\end{figure}

\appendix

\section{Appendix}  

\subsection{Auxiliary results}

\begin{lemma}\label{l:series-to-inf}
For any $\lambda>0$,
\[
\lim_{\lambda\to\infty} e^{-\lambda} \sum_{i=1}^\infty \frac{\lambda^i}{i!} \log(i+1) = \infty.
\]
\end{lemma}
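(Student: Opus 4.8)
The plan is to recognize the left-hand side as the expectation $\ex\bigl[\log(X_\lambda+1)\bigr]$, where $X_\lambda$ is Poisson with parameter $\lambda$, and then to bound this expectation from below by exploiting the concentration of $X_\lambda$ around its mean. First I would observe that the summand at $i=0$ equals $\log 1=0$, so the sum is unchanged by including the index $i=0$:
\[
e^{-\lambda}\sum_{i=1}^\infty \frac{\lambda^i}{i!}\log(i+1)
= e^{-\lambda}\sum_{i=0}^\infty \frac{\lambda^i}{i!}\log(i+1)
= \ex\bigl[\log(X_\lambda+1)\bigr].
\]
The decisive structural feature is that $\log(i+1)\ge0$ for every $i\ge0$, so the quantity $\log(X_\lambda+1)$ is nonnegative and I may discard the contribution of any event without reversing inequalities.

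Next I would localize $X_\lambda$ to the region where $\log(X_\lambda+1)$ is large. On the event $\{X_\lambda\ge\lambda/2\}$ one has $\log(X_\lambda+1)\ge\log(\lambda/2)$, hence, by nonnegativity,
\[
\ex\bigl[\log(X_\lambda+1)\bigr]
\ge \ex\bigl[\log(X_\lambda+1)\,\ind_{\{X_\lambda\ge\lambda/2\}}\bigr]
\ge \log\Bigl(\frac{\lambda}{2}\Bigr)\,\prob\Bigl(X_\lambda\ge\frac{\lambda}{2}\Bigr).
\]
It then remains only to show that the probability factor stays bounded away from $0$; in fact it tends to $1$. Since $\ex X_\lambda=\operatorname{Var}X_\lambda=\lambda$, Chebyshev's inequality gives
\[
\prob\Bigl(X_\lambda<\frac{\lambda}{2}\Bigr)
\le \prob\Bigl(\abs{X_\lambda-\lambda}>\frac{\lambda}{2}\Bigr)
\le \frac{\lambda}{(\lambda/2)^2}=\frac{4}{\lambda},
\]
so that $\prob(X_\lambda\ge\lambda/2)\ge 1-4/\lambda$.

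Combining the two displays yields
\[
\ex\bigl[\log(X_\lambda+1)\bigr]\ge \log\Bigl(\frac{\lambda}{2}\Bigr)\Bigl(1-\frac{4}{\lambda}\Bigr),
\]
and the right-hand side tends to $\infty$ as $\lambda\to\infty$, which is the claim. I expect no genuine obstacle here: because $\log(\,\cdot+1)$ is nonnegative, the potentially troublesome lower tail of $X_\lambda$ contributes nothing harmful, and the only quantitative input required is the crude variance bound controlling the lower deviation of $X_\lambda$ from its mean. Any concentration estimate forcing $X_\lambda\gtrsim\lambda$ with probability tending to $1$ (Chebyshev already suffices) would close the argument; the mild care needed is simply in choosing the cutoff $\lambda/2$ so that the surviving values of $\log(X_\lambda+1)$ still diverge.
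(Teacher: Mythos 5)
Your proof is correct, and it takes a recognizably different route from the paper's. Both arguments exploit the same structural facts — the terms $\log(i+1)$ are nonnegative, so the sum can be bounded below by its contribution from large indices — but they diverge in how the truncation is handled. The paper fixes an arbitrary integer $N$, bounds the sum below by $\log(N+1)\sum_{i\ge N}\lambda^i/i!$, and observes that $e^{-\lambda}\sum_{i=0}^{N-1}\lambda^i/i!\to0$ as $\lambda\to\infty$; this needs no probabilistic tool at all (a finite collection of terms is negligible against $e^\lambda$), and it yields that the liminf exceeds $\log(N+1)$ for every $N$, hence equals $\infty$ — a purely qualitative conclusion. You instead use a cutoff that moves with the parameter, $X_\lambda\ge\lambda/2$, which forces you to invoke a concentration estimate (Chebyshev, via $\ex X_\lambda=\operatorname{Var}X_\lambda=\lambda$), but in return you get the explicit lower bound $\log(\lambda/2)\bigl(1-4/\lambda\bigr)$. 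That bound is quantitative: it shows $\ex\log(X_\lambda+1)$ diverges at logarithmic rate, which matches the actual asymptotics of the entropies studied in the paper (e.g.\ $\entropy_{SH}(\lambda)\sim\frac12\log(2\pi\lambda)+\frac12$), whereas the paper's argument gives divergence with no rate. The only minor point of care in your write-up is that the final inequality is meaningful only for $\lambda>2$ (so that $\log(\lambda/2)>0$ and the probability bound can be multiplied through), but since the claim concerns $\lambda\to\infty$ this costs nothing.
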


\begin{proof}
Obviously, for any $N>1$
\[
\sum_{i=1}^\infty \frac{\lambda^i}{i!} \log(i+1)
\ge
\log(N+1) \sum_{i=N}^\infty \frac{\lambda^i}{i!} .
\]
Furthermore,
\[
\lim_{\lambda\to\infty} e^{-\lambda} \sum_{i=0}^{N-1} \frac{\lambda^i}{i!}   = 0.
\]
Therefore,
\[
\lim_{\lambda\to\infty} e^{-\lambda} \sum_{i=1}^\infty \frac{\lambda^i}{i!} \log(i+1) 
\ge
 \log(N+1) \lim_{\lambda\to\infty} e^{-\lambda} \sum_{i=N}^\infty \frac{\lambda^i}{i!} =  \log(N+1) \lim_{\lambda\to\infty} e^{-\lambda} \sum_{i=0}^\infty \frac{\lambda^i}{i!}= \log(N+1).
\]
Since $N>1$ is arbitrary, we get the statement.
\end{proof}

\begin{lemma}\label{l:max-poisson}
For any $\lambda>0$,
\begin{equation}\label{eq:mulambda}
    \mu(\lambda):=\max_{i\ge 0} p_i(\lambda) 
    = \frac{\lambda^{\lfloor\lambda\rfloor}}{\lfloor\lambda\rfloor!}   e^{-\lambda}, 
\end{equation}
and, for $\lambda\geq1$,
\begin{equation}\label{eq:mulambdale1overla}
    \mu(\lambda)<\frac{1}{\sqrt{2\pi \lfloor\lambda\rfloor}}.
\end{equation}
In particular, the maximal probability of the Poisson distribution with parameter $\lambda > 0$ tends to zero as $\lambda \rightarrow \infty$. Moreover, the following modification of the estimate \eqref{eq:mulambdale1overla} holds true: 
\begin{equation}\label{eq:mulambdale1overlaenhanced}
    \mu(\lambda)<\frac{1}{\sqrt{2\pi \lambda}}\left(1+\frac{1}{(\lambda-1)\vee 1}\right)^{\frac12}\exp\left(-\frac1{12\lambda+1}\right), \qquad\lambda>1.
\end{equation}

\end{lemma}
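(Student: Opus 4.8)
The plan is to establish the three assertions in turn, since each rests on the previous one, and the genuine content sits in the first two while the refined bound is careful bookkeeping. For the identity \eqref{eq:mulambda} I would locate the Poisson mode by the ratio test. Writing $\frac{p_{i+1}(\lambda)}{p_i(\lambda)} = \frac{\lambda}{i+1}$, one sees this ratio exceeds $1$ exactly when $i < \lambda-1$ and is less than $1$ when $i > \lambda-1$; hence $i\mapsto p_i(\lambda)$ increases up to $i=\lfloor\lambda\rfloor$ and decreases afterwards (with a tie $p_{\lambda-1}=p_\lambda$ only when $\lambda$ is an integer, in which case $\lfloor\lambda\rfloor=\lambda$ still records the maximum). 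This pins the maximizer at $i=\lfloor\lambda\rfloor$ and gives $\mu(\lambda)=\frac{\lambda^{\lfloor\lambda\rfloor}}{\lfloor\lambda\rfloor!}e^{-\lambda}$.

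For \eqref{eq:mulambdale1overla}, set $n:=\lfloor\lambda\rfloor\ge1$ and insert the lower Stirling bound $n! > \sqrt{2\pi n}\,(n/e)^n$, which yields $\mu(\lambda) < \frac{1}{\sqrt{2\pi n}}(\lambda/n)^n e^{n-\lambda}$. It then remains to check $(\lambda/n)^n e^{n-\lambda}\le1$, and the cleanest way is to put $g(x):=n\log(x/n)+n-x$ on $[n,\infty)$: here $g(n)=0$ and $g'(x)=n/x-1\le0$, so $g$ is nonincreasing and $g(\lambda)\le0$ because $\lambda\ge n$; exponentiating gives the factor $\le1$, and the strictness of Stirling's inequality makes the whole bound strict. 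The limit $\mu(\lambda)\to0$ is then immediate since $\lfloor\lambda\rfloor\to\infty$.

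The real work is \eqref{eq:mulambdale1overlaenhanced}, which I expect to be the main obstacle, essentially as bookkeeping. Here I would use the sharp Stirling formula $n! = \sqrt{2\pi n}\,(n/e)^n e^{r_n}$ with $\frac{1}{12n+1}<r_n<\frac{1}{12n}$, keeping the \emph{lower} remainder to get $n! > \sqrt{2\pi n}\,(n/e)^n e^{1/(12n+1)}$. Combined with the mode formula this produces
\[
\mu(\lambda) < \frac{1}{\sqrt{2\pi n}}\Bigl(\frac{\lambda}{n}\Bigr)^n e^{n-\lambda}\,e^{-1/(12n+1)} \le \frac{1}{\sqrt{2\pi\lambda}}\sqrt{\frac{\lambda}{n}}\;e^{-1/(12\lambda+1)},
\]
where I reuse $(\lambda/n)^n e^{n-\lambda}\le1$, use $n\le\lambda\Rightarrow\frac{1}{12n+1}\ge\frac{1}{12\lambda+1}$ to replace the remainder in the right direction, and rewrite $\frac{1}{\sqrt{2\pi n}}=\frac{1}{\sqrt{2\pi\lambda}}\sqrt{\lambda/n}$ to surface the target denominator $\sqrt{2\pi\lambda}$.

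The only remaining point is the elementary inequality $\sqrt{\lambda/n}\le\bigl(1+\frac{1}{(\lambda-1)\vee1}\bigr)^{1/2}$, i.e. $\frac{\lambda}{\lfloor\lambda\rfloor}\le1+\frac{1}{(\lambda-1)\vee1}$. Writing $\delta:=\lambda-n\in[0,1)$, this is equivalent to $\delta\bigl((\lambda-1)\vee1\bigr)\le n$, which I would verify in the two regimes: for $1<\lambda<2$ one has $n=1$, $(\lambda-1)\vee1=1$ and $\delta<1=n$; for $\lambda\ge2$ one has $(\lambda-1)\vee1=\lambda-1=n+\delta-1$ and $\delta(n+\delta-1)<n+\delta-1<n$ since $\delta<1$. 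Substituting into the displayed chain delivers \eqref{eq:mulambdale1overlaenhanced}. The subtlety to watch throughout is aligning the floor-based quantity $n=\lfloor\lambda\rfloor$ with the $\lambda$-based right-hand side and selecting the lower (not upper) Stirling remainder, so that every replacement moves the bound in the correct direction.
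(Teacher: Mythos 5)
Your proof is correct and follows essentially the same route as the paper: the mode is located by the ratio test, the bound \eqref{eq:mulambdale1overla} follows from the strict Robbins--Stirling lower bound together with $(\lambda/\lfloor\lambda\rfloor)^{\lfloor\lambda\rfloor}e^{\lfloor\lambda\rfloor-\lambda}\le 1$, and the refined bound \eqref{eq:mulambdale1overlaenhanced} is obtained by retaining the $e^{-1/(12\lfloor\lambda\rfloor+1)}$ factor and converting floor-based quantities to $\lambda$-based ones. The only cosmetic differences are that you prove the exponential inequality by varying $\lambda$ for fixed $n=\lfloor\lambda\rfloor$ (the paper varies $x$ for fixed $\lambda$), and you verify $\lambda/\lfloor\lambda\rfloor\le 1+\tfrac{1}{(\lambda-1)\vee 1}$ by a direct case analysis rather than via the chain $\lambda<\lfloor\lambda\rfloor+1$ and $\lfloor\lambda\rfloor\ge(\lambda-1)\vee 1$.
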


\begin{proof}

To prove \eqref{eq:mulambda}, consider the indices $i$ for which the sequence $p_i(\lambda)$ is increasing. Solving the inequality $p_{i-1}(\lambda) \leq p_i(\lambda)$, that is,
\[
\frac{\lambda^{i-1}   e^{-\lambda}}{(i-1)!} \leq \frac{\lambda^i   e^{-\lambda}}{i!},
\]
we get $i \leq \lambda$.
Hence,
\begin{equation*}
\begin{cases}
p_{i-1}(\lambda) \leq p_i(\lambda) &\text{for } i \leq \lambda,\\
p_{i+1}(\lambda) \leq p_i(\lambda) &\text{for } i \geq \lambda - 1.
\end{cases}
\end{equation*}
This means that the maximum value of $p_i$ is reached at $i \in [\lambda - 1, \lambda]$.
    
Let us consider two cases: $\lambda \notin \mathbb{N}$ and $\lambda \in \mathbb{N}$.
    
\emph{Case 1:} $\lambda \notin \mathbb{N}$. Since $i \in \mathbb{N} \cup \{0\}$, the only $i \in [\lambda - 1, \lambda]$ is $i = \lfloor\lambda\rfloor$. That is, the maximum $p_i(\lambda)$ in the case $\lambda \notin \mathbb{N}$ is reached at $i = \lfloor\lambda\rfloor$, i.e.,
\begin{equation}\label{eq:entite}
\max_{i\ge 0} p_i(\lambda) = p_{\lfloor\lambda\rfloor} (\lambda) = \frac{\lambda^{\lfloor\lambda\rfloor}}{\lfloor\lambda\rfloor!}   e^{-\lambda}.
\end{equation}
    
\emph{Case 2:} $\lambda \in \mathbb{N}$. Since $i \in \mathbb{N} \cup \{0\}$, the possible values of $i \in [\lambda - 1, \lambda]$ are $i = \lambda - 1$ and $i = \lambda$. That is, the maximum value of $p_i(\lambda)$ in the case $\lambda \in \mathbb{N}$ is reached at $i = \lambda - 1$ or $i = \lambda$. Let us compute the values of $p_i(\lambda)$ for $i = \lambda - 1$ and $i = \lambda$:
\begin{gather*}
p_{\lambda -1} (\lambda) = \frac{\lambda^{\lambda-1}}{(\lambda-1)!}   e^{-\lambda}
= \frac{\lambda^\lambda}{\lambda!}  e^{-\lambda}
= p_{\lambda}(\lambda).
\end{gather*}
Thus, for $\lambda \in \mathbb{N}$ the relation \eqref{eq:entite} also holds.

To estimate $\mu(\lambda)$, we start with the well-know inequality
\begin{equation}\label{eq:stirlingest}
    \sqrt{2\pi n} \Bigl( \frac{n}{e}\Bigr)^ne^{\frac{1}{12n+1}}<n!<\sqrt{2\pi n} \Bigl( \frac{n}{e}\Bigr)^ne^{\frac{1}{12n}}, \quad n\geq1.
\end{equation}
Then, by the first inequality in \eqref{eq:stirlingest},
\begin{equation}
    \frac{\lambda^{\lfloor\lambda\rfloor}}{\lfloor\lambda\rfloor!}   e^{-\lambda}<
    \frac{1}{\sqrt{2\pi \lfloor\lambda\rfloor}} \Bigl( \frac{\lambda}{\lfloor\lambda\rfloor}\Bigr)^{\lfloor\lambda\rfloor}  e^{\lfloor\lambda\rfloor}e^{-\lambda}\exp\left(-\frac{1}{12\lfloor\lambda\rfloor+1}\right), \qquad \lambda\geq1.\label{eq:ax1}
\end{equation}

We consider, for a fixed $\lambda \geq 1$, the function
$f(x) = (\frac{\lambda}{x})^{x} e^x$, $x \in (\lambda - 1, \lambda]$.
Its logarithm equals
\[
g(x) = \log f(x) = x \log\lambda - x \log x + x,
\]
and
$g'(x) =  \log\lambda - \log x \ge 0$.
Therefore, the highest value of $f(x)$ is achieved at $x = \lambda$ and equals $f(\lambda) = e^\lambda$. As a result, 
\begin{equation*}
    \left(\frac{\lambda}{\lfloor\lambda\rfloor}\right)^{\lfloor\lambda\rfloor}
e^{\lfloor\lambda\rfloor }e^{- \lambda} \le 1, \qquad \lambda\geq1,
\end{equation*}
and hence, \eqref{eq:ax1} reads
\begin{equation}
    \frac{\lambda^{\lfloor\lambda\rfloor}}{\lfloor\lambda\rfloor!}   e^{-\lambda}<
    \frac{1}{\sqrt{2\pi \lfloor\lambda\rfloor}} \exp\left(-\frac{1}{12\lfloor\lambda\rfloor+1}\right), \qquad \lambda\geq1,\label{eq:ax2}
\end{equation}
that implies \eqref{eq:mulambdale1overla}. 

To get \eqref{eq:mulambdale1overlaenhanced}, we note that, in \eqref{eq:ax2},
\[
\exp\left(-\frac{1}{12\lfloor\lambda\rfloor+1}\right) < \exp\left(-\frac{1}{12\lambda+1}\right);
\]
and
\begin{align*}
    \frac{1}{\sqrt{2\pi \lfloor\lambda\rfloor}} &=\frac{1}{\sqrt{2\pi \lambda}} \left(\frac{\lambda}{\lfloor\lambda\rfloor}\right)^{\frac12}
    <\frac{1}{\sqrt{2\pi \lambda}} \left(\frac{\lfloor\lambda\rfloor+1}{\lfloor\lambda\rfloor}\right)^{\frac12}
    =\frac{1}{\sqrt{2\pi \lambda}} \left(1+\frac{1}{\lfloor\lambda\rfloor}\right)^{\frac12}\\
    &<\frac{1}{\sqrt{2\pi \lambda}} \left(1+\frac{1}{(\lambda-1)\vee 1}\right)^{\frac12}<\frac{1}{\sqrt{2\pi \lambda}} \exp\left(\frac{1}{2((\lambda-1)\vee 1)}\right),
\end{align*}
where we used the inequality $1+x < e^x$ for $x>0$.
\end{proof}

\begin{lemma}\label{l:lim-ratio}
\begin{equation}\label{eq:lim-ratio}
\lim_{\alpha\downarrow0} \frac{\alpha \sum_{i=1}^\infty \frac{i}{(i!)^\alpha}}{\sum_{i=0}^\infty \frac{1}{(i!)^\alpha}} = 0.
\end{equation}
\end{lemma}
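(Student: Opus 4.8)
The plan is to recast the expression in \eqref{eq:lim-ratio} as $\alpha$ times a weighted average and then show that this average grows strictly slower than $1/\alpha$. Set $w_i:=(i!)^{-\alpha}$, $S_0(\alpha):=\sum_{i\ge0}w_i$ and $S_1(\alpha):=\sum_{i\ge1} i\,w_i$, so that the quantity under the limit is $\alpha S_1/S_0=\alpha\langle i\rangle_\alpha$, where $\langle i\rangle_\alpha:=S_1/S_0$ is the mean of $i$ under the probability weights $\pi_i:=w_i/S_0$. Since $w_i=e^{-\alpha\log(i!)}\to1$ for each fixed $i$, both $S_0$ and $S_1$ diverge as $\alpha\downarrow0$; the real content is therefore that $\langle i\rangle_\alpha=o(1/\alpha)$, i.e.\ $\alpha\langle i\rangle_\alpha\to0$.

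First I would introduce a cutoff $M=M(\alpha)$, chosen as the largest integer with $\log(M!)\le 1/\alpha$, so that $\log(M!)\le 1/\alpha<\log((M+1)!)$. Using a Stirling-type lower bound (say $\log(M!)\ge \tfrac{M}{2}\log\tfrac{M}{2}$, which also follows from the first inequality in \eqref{eq:stirlingest}), I would record two consequences. From the left half, $\alpha\le 1/\log(M!)$ gives $\alpha M\le M/\log(M!)\to0$, and forces $M\to\infty$ as $\alpha\downarrow0$. From the right half, $\alpha>1/\log((M+1)!)\ge 1/\big((M+1)\log(M+1)\big)$, hence $\alpha\log(M+1)>1/(M+1)$, which will pin down the tail decay rate.

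Next I would split $\langle i\rangle_\alpha$ at $M$. For the head, $\sum_{i=1}^{M} i\,w_i\le M\sum_{i=1}^{M} w_i\le M\,S_0$, so its contribution to $\langle i\rangle_\alpha$ is at most $M$. For the tail, since $w$ is decreasing and $w_i=w_M\prod_{k=M+1}^{i}k^{-\alpha}\le w_M\,q^{\,i-M}$ with $q:=(M+1)^{-\alpha}<1$, I get $\sum_{i>M} i\,w_i\le w_M\sum_{j\ge1}(M+j)q^{j}\le w_M\big(\tfrac{M}{1-q}+\tfrac{1}{(1-q)^2}\big)$. The estimate $\alpha\log(M+1)>1/(M+1)$ yields $1-q=1-e^{-\alpha\log(M+1)}\ge 1-e^{-1/(M+1)}\ge\tfrac{1}{2(M+1)}$, so $\tfrac{1}{1-q}\le 2(M+1)$ and the tail sum is bounded by $C M^{2}w_M$. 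Dividing by $S_0\ge (M+1)w_M$ (because $w_i\ge w_M$ for $i\le M$) shows the tail contributes at most $CM$ to $\langle i\rangle_\alpha$. Combining the two parts gives $\langle i\rangle_\alpha\le C'M$, whence $\alpha\langle i\rangle_\alpha\le C'\alpha M\to0$, which is the claim.

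The hard part will be the tail estimate, and more precisely the calibration of $M$ against $\alpha$: one needs $\alpha M\to0$ (so the head stays subcritical) and simultaneously $1-q\gtrsim 1/M$ for $q=(M+1)^{-\alpha}$ (so the geometric tail is summable and comparable to the head). The choice $\log(M!)\approx 1/\alpha$ reconciles these because Stirling makes $\log(M!)\approx M\log M$ superlinear, which is exactly what forces $\alpha M\approx 1/\log M\to0$ while keeping $\alpha\log M\approx 1/M$; getting both inequalities from the single defining relation for $M$ is the crux of the argument.
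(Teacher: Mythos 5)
Your proof is correct, and it takes a genuinely different route from the paper's. The paper fixes $\varepsilon>0$ and splits the numerator at the cutoff $\varepsilon/\alpha$: the head there is bounded by $\varepsilon$ times the denominator directly, while the tail requires Stirling's bound $i!>i^{i+\frac12}e^{-i}$, an explicit geometric-series computation whose right-hand side is then analyzed asymptotically, and a separate Stirling-based lower bound $\sum_{i\ge0}(i!)^{-\alpha}\gtrsim -\frac{1}{\alpha\log\alpha}$ for the denominator; the conclusion is that the limit is at most $\varepsilon$ for every $\varepsilon>0$. You instead calibrate the cutoff $M$ to the factorial itself via $\log(M!)\le 1/\alpha<\log((M+1)!)$, and this single defining relation delivers both facts you need: $\alpha M\le M/\log(M!)\to0$ controls the head, and $\alpha\log(M+1)>\frac{1}{M+1}$ gives $1-q\ge\frac{1}{2(M+1)}$ for the tail ratio $q=(M+1)^{-\alpha}$. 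Your denominator bound is also far cruder and simpler --- $S_0\ge(M+1)w_M$ by monotonicity of $w_i$ --- where the paper needs an asymptotic estimate. The net effect is that you dispense with the $\varepsilon$-argument and all asymptotic equivalences, getting the direct quantitative bound $\alpha S_1/S_0\le C\alpha M\le C'/\log(M/2)\to0$; the paper's heavier machinery in exchange produces sharper information (a tail ratio of order $\alpha^{\varepsilon}$ and an explicit asymptotic for the denominator), which is not needed for the lemma itself. I checked the individual steps --- the domination $w_i\le w_M q^{i-M}$, the bound $\sum_{j\ge1}(M+j)q^j\le \frac{M}{1-q}+\frac{1}{(1-q)^2}$, the elementary inequality $1-e^{-x}\ge x/2$ on $[0,1]$, and the resulting estimate $\bigl(\sum_{i>M}iw_i\bigr)/S_0\le CM$ --- and they are all valid; the only cosmetic caveat is that your argument should be stated for $\alpha$ small enough that $M\ge2$, so that $\log(M!)>0$, which is harmless since $M\to\infty$ as $\alpha\downarrow0$.
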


\begin{proof}
Fix $\varepsilon > 0$. We decompose the sum in the numerator into two terms as follows
\[
\sum_{i=1}^\infty \frac{i}{(i!)^\alpha}
= \sum_{1\le i\le \varepsilon/\alpha}\frac{i}{(i!)^\alpha}
+ \sum_{i > \varepsilon/\alpha}\frac{i}{(i!)^\alpha},
\quad \alpha<\varepsilon.
\]
The first term admits the bound:
\[
\alpha \sum_{1\le i\le \varepsilon/\alpha}\frac{i}{(i!)^\alpha}
\le \varepsilon \sum_{1\le i\le \varepsilon/\alpha}\frac{1}{(i!)^\alpha}
\le \varepsilon \sum_{i=0}^\infty \frac{1}{(i!)^\alpha},
\]
whence
\begin{equation}\label{eq:ratio-1term}
\frac{\alpha \sum_{1\le i\le \varepsilon/\alpha}\frac{i}{(i!)^\alpha}}{\sum_{i=0}^\infty \frac{1}{(i!)^\alpha}} \le \varepsilon.
\end{equation}
Now, let us consider the second term.
By the first inequality in \eqref{eq:stirlingest}, $i! > i^{i+\frac12} e^{-i}$, and therefore,
\begin{align*}
\alpha\sum_{i > \varepsilon/\alpha}\frac{i}{(i!)^\alpha}
&\le \alpha \sum_{i = \lfloor\varepsilon/\alpha\rfloor + 1}^\infty i^{1 - \alpha i - \frac{\alpha}{2}} e^{i\alpha}
\\
&= \alpha\sum_{k = 1}^\infty \left(k + \left\lfloor\frac{\varepsilon}{\alpha}\right\rfloor\right)^{1 - \alpha k - \alpha \lfloor\varepsilon/\alpha\rfloor - \frac{\alpha}{2}} \exp\set{\alpha k + \alpha \left\lfloor\frac{\varepsilon}{\alpha}\right\rfloor}
\\
&\le \alpha\sum_{k = 1}^\infty \frac{k + \left\lfloor\frac{\varepsilon}{\alpha}\right\rfloor}{\left(k + \left\lfloor\frac{\varepsilon}{\alpha}\right\rfloor\right)^{\alpha k + \alpha \lfloor\varepsilon/\alpha\rfloor + \frac{\alpha}{2}}} e^{\alpha k + \varepsilon}
\le \alpha\sum_{k = 1}^\infty \frac{k + \frac{\varepsilon}{\alpha}}{\left(\frac{\varepsilon}{\alpha}\right)^{\alpha k + \alpha \lfloor\varepsilon/\alpha\rfloor + \frac{\alpha}{2}}} e^{\alpha k + \varepsilon}
\\
&= \left(\frac{\alpha}{\varepsilon}\right)^{\alpha \left\lfloor\frac{\varepsilon}{\alpha}\right\rfloor + \frac{\alpha}{2}} 
e^\varepsilon\sum_{k = 1}^\infty \left(\alpha k + \varepsilon\right)\left(\frac{\alpha e}{\varepsilon}\right)^{\alpha k }.
\end{align*}
For $\alpha < \frac{\varepsilon}{e}$ the series can be computed using the following relations:
\[
\sum_{k = 1}^\infty x^k = \frac{x}{1-x},\quad
\sum_{k = 1}^\infty k x^k = x\left(\sum_{k = 0}^\infty x^k\right)'
= x\left(\frac{1}{1-x}\right)' = \frac{x}{(1-x)^2}, \quad \abs{x}<1.
\]
We obtain
\begin{equation}\label{eq:numer-upbound}
\alpha\sum_{i > \varepsilon/\alpha}\frac{i}{(i!)^\alpha} 
\le \left(\frac{\alpha}{\varepsilon}\right)^{\alpha \left\lfloor\frac{\varepsilon}{\alpha}\right\rfloor + \frac{\alpha}{2}} 
e^\varepsilon  \left(\frac{\alpha e}{\varepsilon}\right)^{\alpha}
\left( \frac{\alpha}{\left(1 - \left(\frac{\alpha e}{\varepsilon}\right)^{\alpha}\right)^2} +  \frac{\varepsilon}{1 - \left(\frac{\alpha e}{\varepsilon}\right)^{\alpha}}\right).
\end{equation}
Observe that
\[
1 - \left(\frac{\alpha e}{\varepsilon}\right)^{\alpha}
= 1 - \exp\set{\alpha\log\frac{\alpha e}{\varepsilon}}
\sim - \alpha\log\frac{\alpha e}{\varepsilon}
\sim - \alpha\log \alpha, \quad \text{as } \alpha\downarrow0.
\]
Therefore, the right-hand side of \eqref{eq:numer-upbound} is asymptotically equivalent to
\[
\left(\frac{\alpha e}{\varepsilon}\right)^{\varepsilon}
\left( \frac{1}{\alpha\log^2\alpha} -  \frac{\varepsilon}{\alpha\log\alpha}\right)
\sim - \frac{e^\varepsilon \varepsilon^{1-\varepsilon}}{\alpha^{1-\varepsilon} \log\alpha}, \quad \text{as } \alpha\downarrow0.
\]
Now let us study the denominator of \eqref{eq:lim-ratio}. By the second inequality in \eqref{eq:stirlingest}, we may write for any $N\ge1$,
\begin{align*}
\sum_{i=0}^\infty \frac{1}{(i!)^\alpha}
&\ge \sum_{i=1}^N (2\pi i )^{-\frac{\alpha}{2}} i^{-\alpha i} e^{\alpha \left(i-\frac{1}{12i}\right)}
\ge  (2\pi )^{-\frac{\alpha}{2}} \sum_{i=1}^N  i^{-\frac{\alpha}{2}-\alpha i} 
\ge  (2\pi )^{-\frac{\alpha}{2}} \sum_{i=1}^N  N^{-\frac{\alpha}{2}-\alpha i} 
\end{align*}
Taking $N = \lfloor \frac1\alpha\rfloor < \frac1\alpha$, we get
\begin{align*}
\sum_{i=0}^\infty \frac{1}{(i!)^\alpha}
\ge  (2\pi )^{-\frac{\alpha}{2}} \sum_{i=1}^{\lfloor \frac1\alpha\rfloor}  \alpha^{\frac{\alpha}{2}+\alpha i} 
=  (2\pi )^{-\frac{\alpha}{2}} \alpha^{\frac{3\alpha}{2}}  \frac{1 - \alpha^{\alpha \lfloor \frac1\alpha\rfloor }}{1 - \alpha^{\alpha}}
\sim -\frac{1}{\alpha\log\alpha},
\quad\text{as } \alpha\downarrow 0.
\end{align*}
Combining the results above we see that the ratio $\alpha\sum_{i > \varepsilon/\alpha}\frac{i}{(i!)^\alpha} / \sum_{i=0}^\infty \frac{1}{(i!)^\alpha}$
is bounded from above by the function, asymptotically equivalent to
$e^\varepsilon \varepsilon^{1-\varepsilon}\alpha^{\varepsilon} \to0$, as $\alpha\downarrow0$. 
Together with \eqref{eq:ratio-1term} this implies that
\[
\lim_{\alpha\downarrow0} \frac{\alpha \sum_{i=1}^\infty \frac{i}{(i!)^\alpha}}{\sum_{i=0}^\infty \frac{1}{(i!)^\alpha}} \le\varepsilon,
\]
for any $\varepsilon>0$.
Hence \eqref{eq:lim-ratio} holds.
\end{proof}

\subsection{Some properties of the function $\psi(\alpha, \lambda)$}

\begin{lemma}\label{l:der-psi}
    For all $\alpha>0$ and $\lambda>0$, the function $\psi(\alpha, \lambda)$ introduced by \eqref{psifunction} is well defined and continuously differentiable in two variables, moreover, the partial derivatives can be computed as follows:
    \begin{gather}
    \label{eq:der-psi-alpha}
    \frac{\partial}{\partial \alpha} \psi(\alpha, \lambda) = 
    \sum_{i=0}^\infty (p_i(\lambda))^\alpha \log p_i(\lambda),
    \\
    \label{eq:der-psi-lambda}
    \frac{\partial}{\partial \lambda} \psi(\alpha, \lambda) = \alpha e^{-\alpha \lambda} \sum_{i=0}^\infty (i-\lambda) \frac{\lambda^{\alpha i - 1}}{(i!)^\alpha},
    \end{gather}
    where $p_i(\lambda) = e^{-\lambda} \frac{\lambda^i}{i!}$.
\end{lemma}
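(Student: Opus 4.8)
The plan is to establish the local uniform convergence of the series defining $\psi$ together with the two series obtained by formal term-by-term differentiation, and then invoke the standard theorem that a series of $C^1$ functions may be differentiated termwise provided the differentiated series converges locally uniformly. Since continuous differentiability is a local property, it suffices to work on an arbitrary compact rectangle $K=[\alpha_1,\alpha_2]\times[\lambda_1,\lambda_2]$ with $0<\alpha_1\le\alpha_2$ and $0<\lambda_1\le\lambda_2$ and to apply the Weierstrass $M$-test on each such $K$.

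The first step is the single bound that drives everything: the faster-than-geometric decay of $a_i(\lambda):=\lambda^i/i!$. Since $a_{i+1}/a_i=\lambda/(i+1)$, for every integer $i\ge N:=\lceil 2\lambda_2\rceil$ and every $\lambda\le\lambda_2$ one has $a_{i+1}/a_i\le \tfrac12$, whence $a_i(\lambda)\le C_0\,2^{-(i-N)}$ with $C_0:=\lambda_2^N/N!$; enlarging $N$ we may also assume $a_i<1$ for $i\ge N$. Because $e^{-\lambda}\le1$ and, for $0<a<1$, the map $\alpha\mapsto a^\alpha$ is decreasing, we obtain for all $(\alpha,\lambda)\in K$ and $i\ge N$
\[
(p_i(\lambda))^\alpha=e^{-\lambda\alpha}a_i(\lambda)^\alpha\le a_i(\lambda)^{\alpha_1}\le C_0^{\alpha_1}\,q^{\,i-N},\qquad q:=2^{-\alpha_1}\in(0,1).
\]
This gives a summable majorant, so by the $M$-test the series for $\psi$ converges locally uniformly; hence $\psi$ is well defined and continuous on the open quadrant.

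Next I treat the differentiated series. Differentiating the $i$-th term in $\alpha$ produces $(p_i)^\alpha\log p_i$, and since $\log p_i(\lambda)=i\log\lambda-\lambda-\log i!$ one has $|\log p_i(\lambda)|\le C_1\,i\log(i+2)$ uniformly on $K$ (bounding $\log i!\le i\log i$ via \eqref{eq:stirlingest}); differentiating in $\lambda$ gives, by the direct computation $\frac{\partial}{\partial\lambda}(p_i)^\alpha=\alpha(i-\lambda)\lambda^{-1}(p_i)^\alpha$, a term whose absolute value is at most $C_2(i+1)(p_i)^\alpha$ on $K$. In both cases the extra factor is at most polynomial in $i$, so multiplying by the geometric majorant $q^{\,i-N}$ from the previous step still yields a convergent series, and the $M$-test again furnishes locally uniform convergence. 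As each summand is jointly continuous in $(\alpha,\lambda)$, both limit functions are continuous.

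Finally, I apply the term-by-term differentiation theorem in each variable separately: the locally uniform convergence of the partial-derivative series, combined with the convergence of the series for $\psi$, shows that $\partial_\alpha\psi$ and $\partial_\lambda\psi$ exist and are given by \eqref{eq:der-psi-alpha} and \eqref{eq:der-psi-lambda}, while the preceding paragraph gives their continuity. Continuity of both partial derivatives on the open quadrant then implies that $\psi$ is jointly continuously differentiable, as claimed. The only genuinely delicate point is controlling the $\log i!$ factor that enters $\partial_\alpha$; this is subdued by the super-geometric decay of $a_i(\lambda)^\alpha$, and everything else is routine $M$-test bookkeeping.
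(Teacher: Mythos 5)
Your proof is correct and follows essentially the same route as the paper's: both verify, on arbitrary compact rectangles, uniform convergence of the formally differentiated series via the Weierstrass $M$-test and then invoke the classical term-by-term differentiation theorem (applied in each variable separately), concluding joint $C^1$ smoothness from continuity of both partial derivatives. The only difference is in the choice of majorant — you exploit the super-geometric decay of $\lambda^i/i!$ (via the ratio bound $a_{i+1}/a_i\le\tfrac12$) against polynomial factors such as $i\log(i+2)$, whereas the paper absorbs $\left\lvert\log p_i\right\rvert$ into a slightly reduced power $(p_i)^{\alpha_*-\delta}$ using $\left\lvert\log x\right\rvert\le C_\delta x^{-\delta}$ and majorizes by $\psi(\alpha_*-\delta,\lambda^*)$ itself — but both are routine and equally valid bookkeeping.
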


\begin{proof}
It is well known (see, for example, \cite[Section 16.3.5]{Zorich-part2}) that to verify the term-by-term differentiability of a series with differentiable terms, it suffices to establish that the series converges at least at one point and that the corresponding series of derivatives converges uniformly.

Note that the convergence of the series \eqref{psifunction} (as well as the series \eqref{eq:der-psi-lambda}) for all positive values of $\alpha$ and $\lambda$ follows from, for instance, the root test, using the bound \eqref{eq:stirlingest}. 

Next, we prove the uniform convergence of the series of derivatives, i.e., the series \eqref{eq:der-psi-alpha} and \eqref{eq:der-psi-lambda}.
Consider an arbitrary rectangle $R=[\alpha_*,\alpha^*]\times[\lambda_*,\lambda^*]\subset(0,\infty)^2$.
For any $\delta > 0$, there exists a constant $C_\delta > 0$ such that $\abs{\log x} \leq C_\delta x^{-\delta}$ for all $x \in (0, 1)$. Therefore, choosing $\delta \in(0, \alpha_*)$, we can estimate
\[
(p_i(\lambda))^\alpha \abs{\log p_i(\lambda)} \le C_\delta (p_i(\lambda))^{\alpha_* - \delta}
\le \left(e^{-\lambda_*} \frac{(\lambda^*)^i}{i!}\right)^{\alpha_* - \delta}
= e^{(\alpha_* - \delta) (\lambda^*-\lambda_*)} (p_i(\lambda^*))^{\alpha_* - \delta},
\]
for all $(\alpha,\lambda) \in R$.
Thus, by the Weierstrass M-test, the series \eqref{eq:der-psi-alpha} converges uniformly on $R$ because the series $\sum_{i\ge0} (p_i(\lambda^*))^{\alpha_* - \delta}$ converges to $\psi(\alpha_* - \delta,\lambda^*)$.

The uniform convergence of the series \eqref{eq:der-psi-lambda} on $R$ can be established similarly. Specifically, the $i$th  term can be bounded as follows:
\[
\abs{\alpha e^{-\alpha \lambda} (i-\lambda) \frac{\lambda^{\alpha i - 1}}{(i!)^\alpha}}
\le \alpha\left(\frac{i}{\lambda} +1\right) (p_i(\lambda))^\alpha
\le \alpha^*\left(\frac{i}{\lambda_*} +1\right) e^{(\alpha_* - \delta) (\lambda^*-\lambda_*)} (p_i(\lambda^*))^{\alpha_* - \delta}
\eqqcolon M_i,
\]
and the convergence of the series $\sum_i M_i$ follows from the root test similarly to the series \eqref{psifunction}.

The uniform convergence results imply that the series  $\psi(\alpha, \lambda)$ can be differentiated term-by-term with respect to both $\alpha$ and $\lambda$, and the formulas \eqref{eq:der-psi-alpha} and \eqref{eq:der-psi-lambda} hold for any $(\alpha, \lambda) \in R$. Furthermore, $\frac{\partial}{\partial \alpha} \psi(\alpha, \lambda)$ and $\frac{\partial}{\partial \lambda} \psi(\alpha, \lambda)$ are continuous on $R$. Since $\alpha^* > \alpha_* > 0$ and $\lambda^* > \lambda_* > 0$ are chosen arbitrarily, we conclude that the result holds for all $\alpha > 0$ and $\lambda > 0$.
\end{proof}
\begin{remark}\label{rem:mixed-der}
1. Arguing as above, one can show that the mixed derivative $\frac{\partial^2}{\partial\lambda\partial\alpha} \psi(\alpha,\lambda)$ is also continuous in two variables and it can be found using term-by-term differentiation of the series $\psi(\alpha,\lambda)$.

2. The term-by-term differentiability of $\psi(\alpha, \lambda)$ with respect to $\lambda$ can be also deduced from the fact that it is a power series in terms of $\lambda^\alpha$, and any power series can be differentiated term-by-term within its region of convergence.
\end{remark}

\begin{lemma}\label{le:estforpsi}
Let $\gamma_*:=\exp\left(-\dfrac{\pi}{e}\bigl(e^{\frac16}-1\bigr)\right)\approx 0.811$. 
For each $\alpha>0$, $\alpha\neq1$ and each $\gamma\in(\gamma_*,1)$, we define
\begin{equation*}\label{eq:defD}
    D(\alpha,\gamma):=\left(\frac{\pi|\alpha-1|}{-e\alpha\log\gamma}\right) ^{\frac{|\alpha-1|}{2}}.
\end{equation*}
Then
\begin{enumerate}[label=\arabic*)]
    \item for $\alpha\in(0,1)$,
    \begin{equation*}
        \label{eq:psiupperestless1}
        \psi(\alpha,\lambda) < C_1(\alpha,\gamma)e^{-\alpha\lambda}E_\alpha\left(\Bigl(\dfrac{\alpha}{\gamma}\lambda\Bigr)^{\alpha } \right), \quad\lambda>0,
    \end{equation*}
where
    \begin{equation*}
        \label{eq:defC1}
        C_1(\alpha,\gamma):=\sqrt{\alpha}e^{\frac{1}{12\alpha}} D(\alpha,\gamma);
    \end{equation*}
\item for $\alpha>1$, 
\begin{equation*}    \label{eq:psilowerestbig1}
    \psi(\alpha,\lambda)> 
C_2(\alpha,\gamma)e^{-\alpha\lambda}E_\alpha\left(\bigl(\alpha\gamma\lambda\bigr)^{\alpha } \right),    
\end{equation*}
where
    \begin{equation*}
        \label{eq:defC2}
        C_2(\alpha,\gamma) := \sqrt{\alpha} e^{-\frac{\alpha }{12}}\frac{1}{D(\alpha,\gamma)}.
    \end{equation*}
\end{enumerate}

\end{lemma}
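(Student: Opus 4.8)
The plan is to compare $\psi(\alpha,\lambda)$, written via \eqref{psifunction} as $e^{-\alpha\lambda}\sum_{i\ge0}\lambda^{i\alpha}/(i!)^\alpha$, with the Mittag--Leffler series $E_\alpha(z)=\sum_{i\ge0}z^i/\Gamma(\alpha i+1)$ evaluated at $z=(\alpha\lambda/\gamma)^\alpha$ (upper bound) and $z=(\alpha\gamma\lambda)^\alpha$ (lower bound). After cancelling the common factor $e^{-\alpha\lambda}$, both assertions reduce to term-by-term estimates: for the upper bound it suffices that $\Gamma(\alpha i+1)/(i!)^\alpha < C_1(\alpha,\gamma)(\alpha/\gamma)^{\alpha i}$ for every $i\ge1$, and for the lower bound that $\Gamma(\alpha i+1)/(i!)^\alpha > C_2(\alpha,\gamma)(\alpha\gamma)^{\alpha i}$ for every $i\ge1$. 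The $i=0$ terms coincide and contribute $1$ on the left against $C_1$ (resp.\ $C_2$) on the right, so they will be absorbed provided we can guarantee $C_1\ge1$ (resp.\ $C_2\le1$); the strict term-by-term inequalities for $i\ge1$ then upgrade the summed estimate to a strict one.

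The central object is thus the ratio $\Gamma(\alpha i+1)/(i!)^\alpha$, which I would evaluate with the two-sided Stirling bound \eqref{eq:stirlingest} applied to $i!=\sqrt{2\pi i}(i/e)^ie^{r_i}$, $\tfrac{1}{12i+1}<r_i<\tfrac{1}{12i}$, together with its classical counterpart for the Gamma function, $\Gamma(\alpha i+1)=\sqrt{2\pi\alpha i}(\alpha i/e)^{\alpha i}e^{s_i}$, $\tfrac{1}{12\alpha i+1}<s_i<\tfrac{1}{12\alpha i}$. The powers of $e$ and of $i/e$ cancel cleanly, leaving
\[
\frac{\Gamma(\alpha i+1)}{(i!)^\alpha}=\sqrt{\alpha}\,\alpha^{\alpha i}\,(2\pi i)^{\frac{1-\alpha}{2}}\,e^{s_i-\alpha r_i}.
\]
The factor $\alpha^{\alpha i}$ is exactly matched by the $\alpha^{\alpha i}$ hidden inside $((\alpha\lambda/\gamma)^\alpha)^i$ (this is the reason the argument of $E_\alpha$ carries the factor $\alpha$), so it cancels in the comparison, and the remaining task is to bound the polynomial factor $(2\pi i)^{(1-\alpha)/2}$ and the remainder factor $e^{s_i-\alpha r_i}$.

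The crux — and the step I expect to be hardest — is the uniform control of $(2\pi i)^{(1-\alpha)/2}$ against a geometric factor $\gamma^{\pm\alpha i}$. I would treat $i$ as a continuous variable $x>0$ and, for $\alpha\in(0,1)$, maximize $\tfrac{1-\alpha}{2}\log(2\pi x)+\alpha x\log\gamma$; its critical point is $x^\ast=\tfrac{1-\alpha}{-2\alpha\log\gamma}$, and the maximal value works out to precisely $D(\alpha,\gamma)$, whence $(2\pi i)^{(1-\alpha)/2}\gamma^{\alpha i}\le D(\alpha,\gamma)$ for all $i$. The symmetric minimization for $\alpha>1$ gives $(2\pi i)^{(1-\alpha)/2}\gamma^{-\alpha i}\ge 1/D(\alpha,\gamma)$. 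Since the global extremum over real $x>0$ dominates (resp.\ is dominated by) every integer term, this produces a clean, $i$-independent constant. For the remainder factor I would use, for $i\ge1$, that $s_i<\tfrac{1}{12\alpha i}\le\tfrac{1}{12\alpha}$ and $r_i>0$, giving $e^{s_i-\alpha r_i}<e^{1/(12\alpha)}$ in the upper case, and symmetrically $s_i>0$, $r_i<\tfrac{1}{12i}\le\tfrac1{12}$, giving $e^{s_i-\alpha r_i}>e^{-\alpha/12}$ in the lower case. Combining with $D(\alpha,\gamma)$ yields exactly $C_1=\sqrt\alpha\,e^{1/(12\alpha)}D$ and $C_2=\sqrt\alpha\,e^{-\alpha/12}/D$, establishing the term-by-term bounds.

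It remains to justify the role of $\gamma_\ast$, namely that $\gamma\ge\gamma_\ast$ forces $C_2(\alpha,\gamma)\le1$ for all $\alpha>1$ (and, in a non-binding way, $C_1(\alpha,\gamma)\ge1$ for all $\alpha\in(0,1)$), which is what licenses the $i=0$ terms. Here I would optimize $\log C_2$ over $\alpha>1$ for fixed $\gamma$: setting $\partial_\alpha\log C_2=0$ simplifies (the $\tfrac{1}{2\alpha}$ terms cancel) to $\tfrac{\pi(\alpha-1)}{e\alpha(-\log\gamma)}=e^{-1/6}$, and substituting this back into $\log C_2=0$ collapses the $\alpha$-dependent terms and leaves $\log\alpha=\tfrac16$, so the worst case is $\alpha=e^{1/6}$; feeding that value back gives exactly $-\log\gamma=\tfrac{\pi}{e}(e^{1/6}-1)$, which is the definition of $\gamma_\ast$. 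Thus $\gamma\ge\gamma_\ast$ ensures $C_2\le1$ throughout, while the parallel (strictly satisfied) check gives $C_1\ge1$, and summing the term-by-term inequalities completes the proof. I expect the $D(\alpha,\gamma)$ optimization to be the genuinely delicate part, whereas the determination of $\gamma_\ast$, though it looks mysterious, is a short Lagrange-type calculation once $C_1$ and $C_2$ are in hand.
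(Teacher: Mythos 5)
Your proposal reproduces the paper's own proof almost step for step: the reduction to term-by-term bounds on $\Gamma(\alpha i+1)/(i!)^\alpha$ via the two-sided Stirling inequality \eqref{eq:stirlingest}, the absorption of the $i=0$ term under $C_1\ge 1$ (resp.\ $C_2\le 1$), the optimization of $(2\pi x)^{|1-\alpha|/2}\gamma^{\alpha x}$ over real $x>0$ giving exactly $D(\alpha,\gamma)$, and the remainder bounds $e^{1/(12\alpha)}$, $e^{-\alpha/12}$. Your determination of $\gamma_*$ (maximize $\log C_2$ over $\alpha>1$; critical point where $\frac{\pi(\alpha-1)}{e\alpha(-\log\gamma)}=e^{-1/6}$; worst case $\alpha=e^{1/6}$) is the same computation the paper performs after substituting $\bar\alpha=1/\alpha$ and minimizing its function $r_2$ over $(0,1)$; I checked the algebra and it is correct.

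The one genuine soft spot is the assertion that the requirement $C_1(\alpha,\gamma)\ge1$ for $\alpha\in(0,1)$ follows from a ``parallel (strictly satisfied) check.'' The claim is true, but it does \emph{not} follow from the same Lagrange-type computation: writing $P(\alpha)=\frac{\pi(1-\alpha)}{e\alpha(-\log\gamma)}$, one finds $\partial_\alpha\log C_1=-\frac{1}{12\alpha^2}-\frac12\log P$, so the critical equation is $\log P=-\frac{1}{6\alpha^2}$, which has no closed-form solution; moreover the value of $\log C_1$ at a critical point equals $\frac12\log\alpha+\frac{2\alpha-1}{12\alpha^2}$, which is negative for $\alpha$ below roughly $0.85$, so nothing ``collapses'' and the sign of the minimum is not evident without further work on where critical points can lie for $\gamma\ge\gamma_*$. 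The clean way to close this gap is the paper's route: rewrite both requirements as upper bounds on $\log(-\log\gamma)$. For $\alpha\in(0,1)$, the condition $C_2(1/\alpha,\gamma)\le1$ reads $\log(-\log\gamma)\le \frac{\alpha\log\alpha}{1-\alpha}+\frac{1}{6(1-\alpha)}+\log(1-\alpha)+\log\frac{\pi}{e}$, while $C_1(\alpha,\gamma)\ge1$ reads the same with $\frac{1}{6(1-\alpha)}$ replaced by $\frac{1}{6\alpha(1-\alpha)}$; the latter bound exceeds the former by exactly $\frac{1}{6\alpha}>0$, so the $C_1$ condition is implied by the $C_2$ condition, i.e.\ it is indeed non-binding for every $\gamma\ge\gamma_*$ (this is the paper's comparison of its functions $r_1$ and $r_2$, modulo a small algebra slip in the paper's displayed $r_1$). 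With that comparison supplied, your proof is complete and coincides with the paper's.
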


\begin{proof}
It is well known that, for $x>0$,
\[
\sqrt{2\pi x}\left(\frac{x}{e}\right)^xe^{\frac{1}{12x+1}}<\Gamma(x+1)<\sqrt{2\pi x} \left(\frac{x}{e}\right)^xe^{\frac{1}{12x}}.
\]
Therefore, for  $\alpha>0$
\[
\sqrt{2\pi \alpha x}\left(\frac{\alpha x}{e}\right)^{\alpha x}e^{\frac{1 }{12\alpha x+1}}<\Gamma(\alpha x+1)<\sqrt{2\pi \alpha x} \left(\frac{\alpha x}{e}\right)^{\alpha x}e^{\frac{1}{12\alpha x}},
\]
and
\[
(\sqrt{2\pi x})^{-\alpha} \left(\frac{x}{e}\right)^{-\alpha x}e^{-\frac{\alpha }{12x}}<\frac1{\bigl(\Gamma(x+1)\bigr)^\alpha}<(\sqrt{2\pi x})^{-\alpha} \left(\frac{x}{e}\right)^{-\alpha x}e^{-\frac{\alpha }{12x+1}};
\]
hence,
\begin{equation}
    \sqrt{\alpha} (\sqrt{2\pi x})^{1-\alpha}  \bigl(\alpha^\alpha\bigr)^x   e^{\frac{1 }{12\alpha x+1}-\frac{\alpha }{12x}}< \frac{\Gamma(\alpha x+1)}{\bigl(\Gamma(x+1)\bigr)^\alpha}
<\sqrt{\alpha} (\sqrt{2\pi x})^{1-\alpha}\bigl(\alpha^\alpha\bigr)^x 
e^{\frac{1}{12\alpha x}-\frac{\alpha }{12x+1}}.\label{eq:ggest}
\end{equation}

For each $\alpha>0$, $\alpha\neq1$, and each $\gamma\in(0,1)$, we set $b:=\frac{|\alpha-1|}{2}>0$, and consider the function
$f_{\alpha,\gamma}(x):=x^{b}\gamma^{\alpha x}$, $x\geq0$, then $g_{\alpha,\gamma}(x):=\log f_{\alpha,\gamma}(x)=b\log x + x \alpha \log \gamma$, and $g_{\alpha,\gamma}'(x)=\frac{b}{x}+\alpha \log \gamma$, and hence, $f_{\alpha,\gamma}$ attains its maximum on $[0,\infty)$ at $\frac{b}{-\alpha\log \gamma}=\frac{|\alpha-1|}{-2\alpha\log \gamma}>0$ and hence, for $x>0$,
\[
x^{\frac{|\alpha-1|}{2}}\gamma^{\alpha x}\leq \left(\frac{|\alpha-1|}{-2e\alpha\log\gamma}\right) ^{\frac{|\alpha-1|}{2}}:=C_0(\alpha,\gamma).
\]

1. Let $0<\alpha<1$. For any $\gamma\in(\alpha,1)$, we have from the second inequality in \eqref{eq:ggest} that, for $x\geq1$,
\begin{align*}
    \frac{\Gamma(\alpha x+1)}{\bigl(\Gamma(x+1)\bigr)^\alpha}
&<\sqrt{\alpha} (\sqrt{2\pi})^{1-\alpha} x^{\frac{1-\alpha}2}\gamma^{\alpha x} \Bigl(\frac{\alpha}{\gamma}\Bigr)^{\alpha x} 
e^{\frac{1}{12\alpha}}\notag \\&\leq \sqrt{\alpha}e^{\frac{1}{12\alpha}} \bigl(\sqrt{2\pi}\bigr)^{1-\alpha}C_0(\alpha,\gamma)\Bigl(\frac{\alpha}{\gamma}\Bigr)^{\alpha x}=
C_1(\alpha,\gamma) \Bigl(\frac{\alpha}{\gamma}\Bigr)^{\alpha x},
\label{eq:c1est}
\end{align*}
since $\bigl(\sqrt{2\pi}\bigr)^{1-\alpha}C_0(\alpha,\gamma)=D(\alpha,\gamma)$ for $\alpha\in(0,1)$. 
Note that $C_0(\alpha,\gamma)\to\infty$ as $\gamma\nearrow1$, hence, we can assume that $\gamma$ is close enough to $1$ to ensure that $C_1(\alpha,\gamma)\geq1$ (for the given $\alpha)$. Then
\begin{align*}
    \psi(\alpha,\lambda) &= e^{-\alpha\lambda}+ e^{-\alpha\lambda}\sum_{i=1}^\infty\frac{\lambda^{\alpha i}}{(i!)^\alpha}
\leq e^{-\alpha\lambda}+e^{-\alpha\lambda}\sum_{i=1}^\infty\frac{\lambda^{\alpha i}}{\Gamma(\alpha i+1)}C_1(\alpha,\gamma)\Bigl(\frac{\alpha}{\gamma}\Bigr)^{\alpha i}
\\&\leq C_1(\alpha,\gamma)E_\alpha\left(\Bigl(\dfrac{\alpha}{\gamma}\lambda\Bigr)^{\alpha } \right),
\end{align*}
where $E_\alpha$ is the Mittag--Leffler function. 

2. Let $\alpha>1$. For any $\gamma\in(0,1)$, we have from the second inequality in \eqref{eq:ggest} that, for $x\geq1$,
\begin{align*}
   \frac{\Gamma(\alpha x+1)}{\bigl(\Gamma(x+1)\bigr)^\alpha}&>\sqrt{\alpha} (\sqrt{2\pi })^{1-\alpha} x^{\frac{1-\alpha}{2}} \alpha^{\alpha x}   e^{-\frac{\alpha }{12}}=\sqrt{\alpha} (\sqrt{2\pi })^{1-\alpha} \Bigl(x^{\frac{\alpha-1}{2}}\gamma^{\alpha x}\Bigr)^{-1} (\gamma\alpha)^{\alpha x}   e^{-\frac{\alpha }{12}}\\
   &\geq \sqrt{\alpha} e^{-\frac{\alpha }{12}}\frac{1}{(\sqrt{2\pi })^{\alpha-1}C_0(\alpha,\gamma)} (\gamma\alpha)^{\alpha x}=C_2(\alpha,\gamma)(\gamma\alpha)^{\alpha x},
\end{align*}
since $\bigl(\sqrt{2\pi}\bigr)^{\alpha-1}C_0(\alpha,\gamma)=D(\alpha,\gamma)$ for $\alpha>1$.
Similarly to the above, since $C_0(\alpha,\gamma)\to\infty$ as $\gamma\nearrow1$, we can assume that $\gamma$ is close enough to $1$ to ensure that $C_2(\alpha,\gamma)\leq1$ (for the given $\alpha)$. Then
\begin{align*}
    \psi(\alpha,\lambda) &=e^{-\alpha\lambda}+ e^{-\alpha\lambda}\sum_{i=1}^\infty\frac{\lambda^{\alpha i}}{(i!)^\alpha}
\geq e^{-\alpha\lambda}+e^{-\alpha\lambda}\sum_{i=1}^\infty\frac{\lambda^{\alpha i}}{\Gamma(\alpha i+1)}C_2(\alpha,\gamma)(\alpha \gamma)^{\alpha i}
\\&> C_2(\alpha,\gamma)e^{-\alpha\lambda}E_\alpha\left(\bigl(\alpha\gamma\lambda\bigr)^{\alpha } \right).
\end{align*}

We are going to show now that $\gamma$ can be chosen uniformly in $\alpha$. 

For $0<\alpha<1$, $C_1(\alpha,\gamma)\geq1$ iff
\[
\frac12\log\alpha +\frac1{12\alpha}+\frac{1-\alpha}{2}\left(\log \left(\frac{\pi(1-\alpha)}{e\alpha}\right)-\log(-\log\gamma)\right)\geq0,
\]
or equivalently,
\begin{equation}\label{eq:req1}
    \log(-\log\gamma)\leq \frac{\alpha}{2}\log\alpha+\frac{1}{6\alpha(1-\alpha)}+\log(1-\alpha)+\log\frac{\pi}e=:r_1(\alpha).
\end{equation}

For $\alpha>1$, $C_2(\alpha,\gamma)\leq 1$ iff
\[
\frac12\log\alpha -\frac{\alpha}{12}-\frac{\alpha-1}{2}\left(\log \left(\frac{\pi(\alpha-1)}{e\alpha}\right)-\log(-\log\gamma)\right)\leq 0.
\]
Denoting $\bar\alpha:=\frac1{\alpha}\in(0,1)$, we can rewrite this as follows
\begin{gather}
-\frac12\log\bar\alpha -\frac{1}{12\bar\alpha}-\frac{1-\bar\alpha}{2\bar\alpha}\left(\log \left(\frac{\pi(1-\bar\alpha)}{e}\right)-\log(-\log\gamma)\right)\leq 0;\notag\\
\log(-\log\gamma)\leq \frac{\bar\alpha}{1-\bar\alpha}\log\bar\alpha +\frac{1}{6(1-\bar\alpha)}+\log(1-\bar\alpha)+\log \frac{\pi}{e}=:r_2(\bar\alpha). \label{eq:req2}
\end{gather}
Since $\log\bar\alpha<0$, we have $\frac{\bar\alpha}{1-\bar\alpha}\log\bar\alpha <\frac{\bar\alpha}{2}\log\bar\alpha $; also, $\frac{1}{6(1-\bar\alpha)}<\frac{1}{6\bar\alpha(1-\bar\alpha)}$. Therefore,
\[
\inf_{0<\bar\alpha<1} r_2(\bar\alpha)\leq \inf_{0<\alpha<1} r_1(\alpha).
\]
To find the smaller infimum, note that
\[
r_2'(\bar\alpha)=\frac{1}{(1-\bar\alpha)^2}\log \bar\alpha+\frac{1}{1-\bar\alpha}+\frac{1}{6(1-\bar\alpha)^2}-\frac{1}{1-\bar\alpha}=
\frac{1}{(1-\bar\alpha)^2}\left( \log \bar\alpha+\frac16\right).
\]
Hence $r_2$ attains its (global) minimum at $\bar\alpha_*=\exp\bigl(-\frac16\bigr)\in(0,1)$, and it is equal to
\begin{align*}
    r_2(\bar\alpha_*)&=-\frac{\bar\alpha_*}{6(1-\bar\alpha_*)}+\frac{1}{6(1-\bar\alpha_*)}+\log(1-\bar\alpha_*)+\log \frac{\pi}{e}\\
    &=\log\bigl(1-e^{-\frac16}\bigr)+\log\frac{\pi}{e}+\frac16
    =\log\Bigl(\frac{\pi}{e}\bigl(e^{\frac16}-1\bigr)\Bigr).
\end{align*}
Therefore, the assumption
\begin{align*}
    \log(-\log\gamma) &\leq\log\Bigl(\frac{\pi}{e}\bigl(e^{\frac16}-1\bigr)\Bigr),\\
    \log\gamma&\geq-\frac{\pi}{e}\bigl(e^{\frac16}-1\bigr),\\
    \gamma&\geq\exp\left(-\frac{\pi}{e}\bigl(e^{\frac16}-1\bigr)\right)=\gamma_*.
\end{align*}
implies that both \eqref{eq:req1} and \eqref{eq:req2} hold for all $\alpha,\bar\alpha\in(0,1)$, and hence, then $C_1(\alpha,\gamma)\geq1$ for $0<\alpha<1$, and $C_2(\alpha,\gamma)\leq 1$ for $\alpha>1$.

The statement is proven.
\end{proof}

\begin{lemma}\label{le:psi-asymp}
For any $\alpha>0$,
\begin{equation*}\label{eq:psi-asymp}
    \psi(\alpha,\lambda)\sim \frac{1}{ \sqrt{\alpha}}(2\pi\lambda)^{\frac{1-\alpha}{2}}, \qquad\lambda\to\infty.
\end{equation*}
\end{lemma}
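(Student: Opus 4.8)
The plan is to treat $\psi(\alpha,\lambda)=\sum_{i\ge0}\bigl(p_i(\lambda)\bigr)^\alpha$ (see \eqref{psifunction}) by a local–central–limit/Laplace argument. Heuristically, for large $\lambda$ the Poisson weights concentrate around $i=\lambda$ and satisfy $p_i(\lambda)\approx(2\pi\lambda)^{-1/2}\exp\!\bigl(-(i-\lambda)^2/(2\lambda)\bigr)$; raising to the power $\alpha$ and summing replaces the sum by the Gaussian integral $\int_{\mathbb R}e^{-\alpha(x-\lambda)^2/(2\lambda)}\,dx=\sqrt{2\pi\lambda/\alpha}$, which produces exactly $(2\pi\lambda)^{-\alpha/2}\sqrt{2\pi\lambda/\alpha}=\alpha^{-1/2}(2\pi\lambda)^{(1-\alpha)/2}$. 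To make this rigorous I would fix $\varepsilon\in(0,\tfrac16)$ and split the sum into the central block $\mathcal C=\{i:|i-\lambda|\le\lambda^{1/2+\varepsilon}\}$ and its complement.

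On $\mathcal C$ I would use both inequalities in \eqref{eq:stirlingest}, applied to $\log p_i(\lambda)=-\lambda+i\log\lambda-\log i!$, and Taylor-expand in $s=i-\lambda$ to obtain
\[
\log p_i(\lambda)=-\tfrac12\log(2\pi\lambda)-\tfrac{(i-\lambda)^2}{2\lambda}+R_i(\lambda),
\]
with $\sup_{i\in\mathcal C}|R_i(\lambda)|\to0$ as $\lambda\to\infty$; here the choice $\varepsilon<\tfrac16$ is what guarantees that the cubic remainder $s^3/\lambda^2$ (and the lower-order pieces $s/\lambda$ and $\log(i/\lambda)$) is uniformly small. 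Consequently $\bigl(p_i(\lambda)\bigr)^\alpha=(2\pi\lambda)^{-\alpha/2}e^{-\alpha(i-\lambda)^2/(2\lambda)}\bigl(1+o(1)\bigr)$ uniformly on $\mathcal C$, so the central contribution equals $(2\pi\lambda)^{-\alpha/2}(1+o(1))\sum_{i\in\mathcal C}e^{-\alpha(i-\lambda)^2/(2\lambda)}$. Comparing this last (unimodal) sum with the corresponding integral — the sum--integral discrepancy being $O(1)$ while the integral is of order $\sqrt\lambda$ — gives $\sum_{i\in\mathcal C}e^{-\alpha(i-\lambda)^2/(2\lambda)}\sim\sqrt{2\pi\lambda/\alpha}$, so the central block already yields the claimed asymptotics.

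For the tail $|i-\lambda|>\lambda^{1/2+\varepsilon}$ I would rely on the pointwise upper bound coming from the first inequality in \eqref{eq:stirlingest}, namely $p_i(\lambda)\le(2\pi i)^{-1/2}e^{\lambda h(i/\lambda)}$ with $h(u)=u-1-u\log u$. Since $h$ is concave with $h(1)=h'(1)=0$ and $h''(u)=-1/u$, one has $\lambda h(i/\lambda)\le-\tfrac{(i-\lambda)^2}{2\,(i\vee\lambda)}$; together with $(2\pi i)^{-\alpha/2}\le1$ for $i\ge1$ (the term $i=0$ contributing only $e^{-\alpha\lambda}$) this gives $\bigl(p_i(\lambda)\bigr)^\alpha\le e^{-\alpha(i-\lambda)^2/(2(i\vee\lambda))}$. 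On the tail the exponent is at least of order $\lambda^{2\varepsilon}$ at the boundary, so summing the resulting Gaussian/geometric tail shows that the whole tail contribution is super-polynomially small, in particular $o\bigl((2\pi\lambda)^{(1-\alpha)/2}\bigr)$. Adding the two pieces yields the lemma.

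I expect the tail estimate to be the main obstacle, specifically in the range $0<\alpha<1$. There the crude bound $\bigl(p_i(\lambda)\bigr)^\alpha\le\mu(\lambda)^{\alpha-1}p_i(\lambda)$ is useless (the exponent $\alpha-1$ is negative), and the one-sided bounds of Proposition~\ref{prop:psi-monot} recover only the correct power of $\lambda$ but not the constant $\alpha^{-1/2}$; it is precisely the sharp pointwise Stirling bound with the concavity estimate on $h$ that controls the slowly decaying tail uniformly in $\alpha$. A secondary technical point is the bookkeeping needed to ensure the uniform smallness of $R_i(\lambda)$ on $\mathcal C$, which is what dictates the admissible window exponent $\varepsilon$.
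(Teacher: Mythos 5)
Your proof is correct, and it reaches the result by a genuinely different implementation of the same underlying idea. The paper uses the continuum saddle-point method: it replaces $\sum_{n}\lambda^{\alpha n}/(n!)^\alpha$ by $\int_0^\infty \lambda^{\alpha x}/\bigl(\Gamma(x+1)\bigr)^\alpha\,dx$, expands $x\log\lambda-\log\Gamma(x+1)$ via Stirling's formula, locates the maximum at $x\approx\lambda$, and evaluates the resulting Gaussian integral; you instead stay with the discrete sum, split it at $|i-\lambda|\le\lambda^{1/2+\varepsilon}$, and control each piece explicitly. Both arguments identify the same Gaussian profile of width $\sqrt{\lambda/\alpha}$ centred at $i=\lambda$ and hence the same constant $\alpha^{-1/2}$. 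What each buys: the paper's write-up is shorter but leaves several asymptotic replacements unjustified (the sum-to-integral step, discarding the $\tfrac{1}{2x}$ term, and the purely local quadratic expansion with no control of the contribution away from the maximum); your version pays for full rigor with bookkeeping --- the window exponent $\varepsilon<\tfrac16$ (your arithmetic is right: the cubic remainder is $O(\lambda^{3\varepsilon-1/2})$, so this is exactly the threshold), the $O(1)$ sum-versus-integral comparison for the unimodal Gaussian, and the concavity bound $h(u)\le -\tfrac{(u-1)^2}{2(u\vee 1)}$ derived from $h''(u)=-1/u$, which is what tames the tail in the delicate regime $0<\alpha<1$ where, as you correctly note, the crude bounds behind Proposition~\ref{prop:psi-monot} capture the power of $\lambda$ but not the constant. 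Your use of the lower Stirling bound from \eqref{eq:stirlingest} for the pointwise tail estimate, and the observation that the $i=0$ term contributes only $e^{-\alpha\lambda}$, close the remaining gaps, so the proposal is a complete --- and arguably tighter --- proof than the one in the paper.
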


\begin{proof}
We are going to use the saddle point method. We have that
\[
\sum_{n=0}^\infty \frac{\lambda^{\alpha n}}{(n!)^\alpha}\sim \int_0^\infty \frac{\lambda^{\alpha x}}{\bigl(\Gamma(x+1)\bigr)^\alpha}\,dx, \qquad \lambda\to\infty.
\]
Next,
\[
\frac{\lambda^{\alpha x}}{\bigl(\Gamma(x+1)\bigr)^\alpha}=\exp\left( \alpha \bigl( x\log\lambda -\log \Gamma(x+1)\bigr)\right),
\]
and by Stirling's formula,
\[
x\log\lambda -\log \Gamma(x+1)  \sim x\log\lambda - x\log x+x-\frac12\log(2\pi x)=:f_\lambda(x), \qquad x\to\infty.
\]
Since
\[
f_\lambda'(x)=\log\lambda -\log x-\frac{1}{2x} ,
\]
the function $f_\lambda$ attains its maximum value at $x\approx\lambda$, as we can neglect $\frac{1}{2x}$ for $x\approx\lambda\to\infty$, so that $f'_\lambda(\lambda)\approx0$. Since $f_\lambda''(x)=-\frac{1}{x}+\frac{1}{2x^2}$, we get that, for $x\approx \lambda$, 
\[
f_\lambda(x)\approx f_\lambda(\lambda)+\frac12f_\lambda''(\lambda)(x-\lambda)^2=\lambda-\frac12\log(2\pi \lambda)-\frac1{2\lambda}\Bigl(1-\frac{1}{2\lambda}\Bigr) (x-\lambda)^2.
\]
Therefore, for $\lambda\to\infty$,
\begin{align*}
    \int_0^\infty \frac{\lambda^{\alpha x}}{\bigl(\Gamma(x+1)\bigr)^\alpha}\,dx&\sim
  \int_0^\infty e^{\alpha\bigl( f_\lambda(\lambda)+\frac12f_\lambda''(\lambda)(x-\lambda)^2\bigr)}\,dx\\&=\frac{e^{\alpha \lambda}}{(2\pi\lambda)^{\frac{\alpha}{2}}}\int_{-\lambda}^\infty e^{-\frac{\alpha}{2\lambda}\bigl(1-\frac{1}{2\lambda}\bigr)x^2}dx\\
  &= \frac{e^{\alpha \lambda}}{(2\pi\lambda)^{\frac{\alpha}{2}}}\sqrt{\frac{2\lambda}{\alpha\bigl(1-\frac{1}{2\lambda}\bigr)}}\int_{-\sqrt{\frac{\alpha\lambda}{2}\bigl(1-\frac{1}{2\lambda}\bigr)}}^\infty e^{-y^2}dy\\&\sim 
  \frac{e^{\alpha \lambda}}{(2\pi\lambda)^{\frac{\alpha}{2}}}\sqrt{\frac{2\lambda}{\alpha}}\int_{-\infty}^\infty e^{-y^2}dy
  =\frac{(2\pi\lambda)^{\frac{1-\alpha}{2}}}{ \sqrt{\alpha}}\,e^{\alpha \lambda}.
\end{align*}
By the definition \eqref{psifunction} of the function $\psi(\alpha,\lambda)$, this implies the statement.
\end{proof}

\section*{Acknowledgement}
YM is supported by The Swedish Foundation for Strategic Research, grant UKR24-0004, and by the Japan Science and
Technology Agency CREST, project reference number JPMJCR2115.
KR is supported by the Research Council of Finland, decision number 359815.
YM and KR acknowledge that the present research is carried out within the frame and support of the ToppForsk project no.~274410 of the Research Council of Norway with the title STORM: Stochastics for Time-Space Risk Models.

\end{document}